\documentclass[letterpaper,12pt]{article}
\usepackage{natbib} \bibpunct{(}{)}{;}{a}{}{,}
\usepackage{fullpage}
\usepackage{amsmath,amssymb,amsthm}
\usepackage{enumerate}
\usepackage{appendix}
\usepackage{graphicx}
\usepackage{color}
\usepackage{hyperref}
\usepackage{algorithm}
\usepackage{algpseudocode}

\hypersetup{
        colorlinks   = true,
        linkcolor    = blue,
        citecolor    = green
}
\usepackage{authblk}

\newtheorem{theorem}{Theorem}
\newtheorem{lemma}{Lemma}
\newtheorem{remark}{Remark}

\newtheorem{assumption}{Assumption}

\newcommand{\imag}{\mathrm{i}\mkern1mu}

\newcommand{\tti}{{2,\infty}}
\newcommand{\mZero}{{\bf 0}}

\newcommand{\ASE}{\operatorname{ASE}}
\newcommand{\SNR}{\gamma}

\newcommand{\Cov}{\operatorname{Cov}}
\newcommand{\Corr}{\operatorname{Corr}}
\newcommand{\diag}{\operatorname{diag}}
\newcommand{\sgn}{\operatorname{sgn}}

\newcommand{\C}{\mathbb{C}}

\newcommand{\R}{\mathbb{R}}
\newcommand{\bbO}{\mathbb{O}}

\newcommand \Op [1] {O_P \left( #1 \right)}
\newcommand \Omegap [1] {\Omega_P \left( #1 \right)}

\newcommand \op [1] {o_P \left( #1 \right)}

\newcommand{\mLambdaobsd}{\boldsymbol{\Lambda}}
\newcommand{\mLambdaobsdperp}{\mLambdaobsd_{\perp}}
\newcommand{\mSigmaobsd}{\boldsymbol{\Sigma}}
\newcommand{\sigmaobsd}[1]{\sigma_{#1}}
\newcommand{\mTheta}{\boldsymbol{\Theta}}

\newcommand{\calK}{\mathcal{K}}

\newcommand{\Zobsd}{Z}

\newcommand{\Utrue}[1]{U^\star_{#1}}

\newcommand{\ve}{\mathbf{e}}
\newcommand{\vu}{\mathbf{u}}
\newcommand{\vv}{\mathbf{v}}

\newcommand{\mA}{\mathbf{A}}
\newcommand{\mB}{\mathbf{B}}
\newcommand{\mD}{\mathbf{D}}
\newcommand{\mE}{\mathbf{E}}
\newcommand{\mH}{\mathbf{H}}
\newcommand{\mI}{\mathbf{I}}
\newcommand{\mJ}{\mathbf{J}}
\newcommand{\mK}{\mathbf{K}}
\newcommand{\mM}{\mathbf{M}}
\newcommand{\mN}{\mathbf{N}}
\newcommand{\vN}[1]{\mathbf{N}_{#1}}
\newcommand{\mQ}{\mathbf{Q}}
\newcommand{\mRobsd}{\mathbf{R}}
\newcommand{\mUobsd}{\mathbf{U}}
\newcommand{\mUobsdperp}{\mUobsd_{\perp}}
\newcommand{\mV}{\mathbf{V}}
\newcommand{\mW}{\mathbf{W}}
\newcommand{\mWtilde}{\mathbf{\widetilde{W}}}
\newcommand{\mXobsd}{{\mathbf{\hat{X}}}}
\newcommand{\mY}{\mathbf{Y}}
\newcommand{\mZobsd}{\mathbf{Z}}
\newcommand{\mZobsdtilde}{\mathbf{\widetilde{Z}}}
\newcommand{\vZobsd}[1]{{\mathbf{Z}_{#1}}}

\newcommand{\mLambda}{\mathbf{\Lambda}}

\newcommand{\Ftrue}{F^\star}
\newcommand{\mFtrue}{\mathbf{F^\star}}
\newcommand{\Rtrue}{R^\star}
\newcommand{\mRtrue}{\mathbf{R^\star}}
\newcommand{\mUtrue}{{\mathbf{U^\star}}}
\newcommand{\Utruevec}[1]{{U^\star_{#1}}}
\newcommand{\mXtrue}{{\mathbf{X^\star}}}
\newcommand{\Ztrue}{Z^\star}
\newcommand{\mZtrue}{{\mathbf{Z^\star}}}
\newcommand{\vZtrue}[1]{{\mathbf{Z}^\star_{#1}}}
\newcommand{\mZtildestar}{{\mathbf{\widetilde{Z}^\star}}}
\newcommand{\mZtildestartop}{{\mathbf{\widetilde{Z}^{\star \top}}}}
\newcommand{\mZtruetilde}{\mZtildestar}
\newcommand{\mZtruetildetop}{\mZtildestartop}
\newcommand{\vZtruetilde}[1]{{\mathbf{\widetilde{Z}}^\star_{#1}}}
\newcommand{\Ztruetilde}{\widetilde{Z}^\star}
\newcommand{\mLambdatrue}{{\mLambda^\star}}
\newcommand{\mSigmatrue}{\mathbf{\Sigma^\star}}
\newcommand{\sigmatrue}[1]{{\sigma^\star_{#1}}}
\newcommand{\mDelta}{\mathbf{\Delta}}
\newcommand{\mOmega}{\mathbf{\Omega}}

\newcommand{\lambdatrue}[1]{{\lambda^\star_{#1}}}
\newcommand{\lambdaobsd}[1]{{\lambda_{#1}}}

\newcommand{\HH}{\text{H}}

\newcommand{\conjugate}[1]{\overline{#1}}

\begin{document}

\title{Adjacency Spectral Embeddings of Correlation Networks}
\author{Keith Levin}
\affil{\small University of Wisconsin--Madison}
\date{}

\maketitle

\begin{abstract}
In many applications, weighted networks are constructed based on time series data: each time series is associated to a vertex and edge weights are given by pairwise correlations.
The result is a network whose edge dependency structure violates the assumptions of most common network models.
Nonetheless, it is common to analyze these ``correlation networks'' using embedding methods derived from edge-independent network models, based on a belief that the edges are approximately independent.
In this work, we put this modeling choice on firm theoretical ground.
We show that when the time series are expressible in terms of a small number of Fourier basis elements (or in some other suitably-chosen basis), correlation networks correspond to latent space networks with dependent edge noise in which the vertex-level latent variables encode the basis coefficients.
Further, we show that when time series are observed subject to noise, spectral embedding of the resulting noisy correlation network still recovers these true vertex-level latent representations under suitable assumptions.
This characterization of embeddings as learning Fourier coefficients appears to be folklore in the signal processing community in the context of principal component analysis, but is, to the best of our knowledge, new to the statistical network analysis literature.
\end{abstract}

\section{Introduction} \label{sec:intro}

Networks have emerged as crucial data sources in scientific disciplines including neuroscience \citep{Sporns2012,connectal}, economics \citep{GolJac2014,TumLilMan2010}, microbiomics \citep{fierer_embracing_2017,jiang_microbiome_2019} and sociology \citep{Lazega2001,zhu2020multivariate} to name but a few.
As a result, a handful of workhorse statistical network models have emerged.
For example, the stochastic blockmodel \citep[SBM;][]{HolLasLei1983,Abbe2018} posits that each vertex belongs to one of $d$ communities, and the probability that two vertices form an edge is fully determined by their community memberships, with edges generated independently conditional on community memberships.
In recent years, the SBM has been extended in a variety of ways \citep[see, e.g.,][]{DCSBM,MMSBM,PABM,GolZheFieAir2010,GaoMaZhaZho2018,ZhaLevZhu2020,Abbe2018,JinKeLuo2024}.
For example, the degree-corrected SBM \citep{DCSBM} modifies the SBM to give each vertex a ``popularity'' parameter, so that vertices from the same community may exhibit distinct behaviors in their propensity to form edges.
Models such as the random dot product graph \citep[RDPG;][]{YouSch2007,RDPGsurvey,GRDPG},
graph root distribution \citep{Lei2021graphon},
Hoff-style models \citep{HofRafHan2002},
and the graphon \citep{Lovasz2012,TanCap2025}
further extend this idea, modeling network structure as driven by latent vertex-level vectors.
All of these models and their attendant estimation procedures rely on the assumption that edges are generated independently conditional on latent vertex-level structure.

In many applications, these edge independence assumptions are grossly violated.
Most notably, 
weighted networks are often derived from correlations between time series: a time series or other sequence is associated to each vertex, and edge weights are given by the correlations between these sequences.
In these {\em correlation networks}, edge independence assumptions clearly fail to hold: the same time series appears in all edge weights incident upon a given vertex.
As an example, consider connectomic networks \citep{Sporns2012}, in which vertices correspond to brain regions, and edge weights encode the strength of interaction between brain regions.
These interaction strengths are typically measured by correlations between blood oxygen levels in these brain regions \citep{MahTooBerMacBas2021,Vogelstein2025Pearson}. 
Thus, many connectomic networks are weighted \citep[or are binarized versions of weighted networks; see][and citations therein]{LeLevLev2018,MasBoyGarMuc2025}, with edges given by correlations between noisily-measured underlying signals from different brain regions.
As another example, consider affinity matrices, which encode networks whose vertices are stocks or other assets, and edges describe similarities of those assets \citep{FanFoutzJamesJank2014,hong2019inferring,FanKeLia2021}, usually measured by correlation across some set (e.g., correlation of prices or similarity of internet search activity).
As yet another example, in microbiome studies, it is common to construct networks whose vertices correspond to species of microorganisms, and edges capture the correlation of abundance counts across a collection of patients or environments \citep{lee_heterogeneous_2020,BeckerETAL2023,ZitLiWelGlaMorKri2024,BarSahBuf2025}.

In all of these example settings, the networks under study have weighted edges, with weights given by correlations between sequences.
For a given vertex, all edges incident on that vertex are based on the same sequence's correlations with all other observed sequences.
For example, in connectomic networks, the blood oxygen levels measured at brain region $i$ are used to compute the edge weights between region $i$ and all other brain regions.
The result is that edges in this network are clearly not independent, even after conditioning on any obvious choice of latent structure.
Despite this gap between model assumptions and reality, independent-edge latent variable models have been applied to these and related application domains with clear success. 
The apparent consensus in the network analysis community is that this violation of independence assumptions is not a concern. 
The goal of the present work is to put this folklore on firm ground by studying the properties of these correlation networks.
In particular, we focus here on the behavior of {\em embeddings} of correlation networks.

Embeddings, whereby vertices of a network are represented in a low-dimensional Euclidean space \citep{SusTanFisPri2012,SusTanPri2014,TanPri2018,node2vec}, are widely deployed for analysis and visualization of network data.
Embedding methods, by and large, bring with them implicit modeling assumptions.
For example, the adjacency spectral embedding \citep[ASE;][]{SusTanFisPri2012} implicitly assumes that the observed symmetric adjacency matrix $\mA \in \R^{n \times n}$ can be written as a low-rank matrix subject to noise,
\begin{equation} \label{eq:RDPG}
\mA = \mXtrue \mXtrue^\top + \mE,
\end{equation}
where the rows of $\mXtrue \in \R^{n \times d}$ encode ``latent positions'' of the $n$ vertices, and the matrix $\mE \in \R^{n \times n}$ has mean-zero entries independent (up to symmetry) given $\mXtrue$.
Similar ideas appear in, for example, \cite{GRDPG,LevAthTanLyzYouPri2017,Lei2021graphon,LevLodLev2022,GalJonBerPriRub2024}.
See \cite{RDPGsurvey} for discussion of the RDPG, the model that motivates the ASE and related spectral embeddings.
More generally, embeddings arise naturally in community detection as an input to $k$-means or other clustering methods to recover community memberships in the stochastic blockmodel \citep{vonLuxburg2007,RohChaYu2011,LyzSusTanAthPri2014,TwoTruths,LyzTanAthParPri2017}.

In light of the conditional independence assumptions on $\mE$, applying spectral embedding methods to correlation matrices violates what are otherwise standard, albeit unrealistic, network modeling assumptions.
Nonetheless, embeddings are used to analyze correlation or covariance networks throughout the sciences.
The ``folklore'' belief is that edge dependence can be safely ignored because the correlations are approximately independent.
An additional concern relates to the fact that network embeddings in edge-independent models can typically estimate a sensibly-defined latent structure.
For example, under suitable assumptions on $\mE$ in Equation~\eqref{eq:RDPG}, the ASE of $\mA$ recovers $\mXtrue$, up to orthogonal non-identifiability (see Section~\ref{sec:results} for further discussion).
When one applies the ASE or some other embedding method to a correlation network, there is no obvious analogue of the low-rank structure $\mXtrue$ in Equation~\eqref{eq:RDPG} for the embedding to estimate, nor is there an obvious interpretation for such a structure.

This work sets out to address these concerns and put the ``folklore'' on firm ground.
First, we show that correlation networks have a natural analogue of the low-dimensional latent structure $\mXtrue$ in Equation~\eqref{eq:RDPG}, which encode the Fourier coefficients of the time series (or the coefficients under any other suitably-chosen transform).
This is made formal in Lemmas~\ref{lem:FsqrtK:real} and~\ref{lem:ASElearnedFourier} below.
Second, we show in Theorem~\ref{thm:XobsdXtrue:tti} that when a collection of time series is observed subject to noise, the ASE applied to the correlation matrix recovers this low-dimensional encoding of the Fourier coefficients, up to non-identifiability constraints, at a rate that is qualitatively similar to those established for other related estimation problems.

{\bf Notation.} For a positive integer $n$, we write $[n] = \{1,2,\dots,n\}$.
Throughout, matrices and vectors are bolded, while scalars are in normal typeface. 
For a matrix $\mB$, we use $\mB^\top$ to denote its transpose and $\mB^\HH$ to denote its conjugate transpose.
We write $\|\mB\|$ for the spectral norm of $\mB$, $\| \mB \|_F$ for its Frobenius norm, and $\| \mB \|_{\tti}$ for its $(\tti)$-norm, i.e., the maximum Euclidean norm of the rows of $\mB$.
We denote the identity matrix by $\mI$, the matrix of all ones by $\mJ$ and the matrix of all zeros by $\mZero$, with dimensions made clear by context.
We use $\imag$ to denote the imaginary unit.
Note that we use the similar looking $i$ in places, but only ever as a summation index or subscript, so there is no risk of confusion.
For a complex number $z \in \C$, we use $\Re(z)$ and $\Im(z)$ to denote its real and imaginary components, respectively, so that $z = \Re(z) + \imag \Im(z)$.
We write $\bar{z}$ for the complex conjugate of $z$.

\section{Model Setup and Main Results} \label{sec:results}

Consider a collection of $n$ length-$T$ discrete-time time series, so that for each $i \in [n]$ and $t \in [T]$, $\Ztrue_{i,t} \in \R$ is the value of the $i$-th time series at time $t \in [T]$.
We write $\vZtrue{i} \in \R^T$ to denote the $i$-th time series, written as a $T$-dimensional vector.
We model these time series as subject to noise, so that our observed data takes the form $\vZobsd{i} = \vZtrue{i} + \vN{i}$ for each $i \in [n]$, where $\vN{i} \in \R^T$ is a vector of measurement noise (precise noise assumptions are given in Section~\ref{subsec:recoveringLatentStructure}).
For example, in our neuroimaging example discussed in Section~\ref{sec:intro}, the true signal $\vZtrue{i}$ corresponds to underlying brain activity in the brain region indexed by $i \in [n]$, while the noisy signal $\vZobsd{i}$ corresponds to the measured blood oxygen level in brain region $i$.
Collecting the true signals $\vZtrue{i}$ into the rows of $\mZtrue \in \R^{n \times T}$ and forming matrices $\mZobsd \in \R^{n \times T}$ and $\mN \in \R^{n \times T}$ analogously,
we may write
\begin{equation} \label{eq:def:Zobsd}
\mZobsd = \mZtrue + \mN \in \R^{n \times T} .
\end{equation}

In the applications discussed in Section~\ref{sec:intro}, it is common to form a weighted network with vertices $i\in [n]$ by assigning edge weights via correlation among these time series.
That is, ignoring measurement noise for now, we form a weighted network with adjacency matrix
\begin{equation} \label{eq:def:Rtrue}
\mRtrue
= \left[ \Rtrue_{i,j} \right]
= \left[ \Corr( \vZtrue{i}, \vZtrue{j} ) \right] 
= \left[ \frac{ \Cov( \vZtrue{i}, \vZtrue{j} ) }{ \sigmatrue{i} \sigmatrue{j} } \right]
\in \R^{n \times n},
\end{equation}
where 
\begin{equation} \label{eq:def:sigmatrue}
\sigmatrue{i}^2 
= \frac{1}{T} \sum_{t=1}^T \left( \Ztrue_{i,t} - \frac{1}{T}\sum_{s=1}^T \Ztrue_{i,s} \right)^2 .
\end{equation}
We refer to a network with adjacency matrix as in Equation~\eqref{eq:def:Rtrue} as a {\em correlation network}, in light of the fact that its edge weights encode correlations between time series. 

\begin{remark} \label{rem:power} 
On first glance, it is tempting to interpret $\sigmatrue{i}^2$ as simply a variance.
However, since we interpret $\vZtrue{i} \in \R^T$ as encoding network structure, rather than random noise, $\sigmatrue{i}^2$ is better viewed as a measure of signal strength.
Indeed, $\sigmatrue{i}^2$ corresponds precisely to the power in the signal $\vZtrue{i}$ after centering about its time-average \citep{OppSch2009}.
\end{remark}

Defining the ``centering'' matrix
\begin{equation} \label{eq:def:M}
\mM = \frac{1}{T}\left( \mI - \mJ \right) \in \R^{T \times T} ,
\end{equation}
we may write $\mRtrue$ as a matrix product,
\begin{equation} \label{eq:def:Rtrue:mxprod}
\mRtrue 
= \frac{1}{T} \mSigmatrue^{-1/2} \mZtrue \mM \mZtrue^\top \mSigmatrue^{-1/2},
\end{equation}
where
\begin{equation} \label{eq:def:sigma}
\mSigmatrue = \diag\left( \sigmatrue{1}^2, \sigmatrue{2}^2, \dots, 
		\sigmatrue{n}^2 \right)
\in \R^{n \times n} .
\end{equation}

Of course, in practice, we do not observe the true signals in $\mZtrue$, but instead observe $\mZobsd$ as in Equation~\eqref{eq:def:Zobsd}, and our resulting correlation network has an adjacency matrix given by 
\begin{equation} \label{eq:def:Robsd}
\mRobsd
= \frac{1}{T} \mSigmaobsd^{-1/2} \mZobsd \mM \mZobsd^\top \mSigmaobsd^{-1/2},
\end{equation}
where
\begin{equation} \label{eq:def:Sigmaobsd}
\mSigmaobsd 
= \diag\left( \sigmaobsd{1}^2, \sigmaobsd{2}^2, \dots, \sigmaobsd{n}^2 \right)
\in \R^{n \times n}
\end{equation}
is the diagonal matrix of (noisy) variances,
\begin{equation} \label{eq:def:sigmaobsd}
\sigmaobsd{i}^2
= \frac{1}{T} \sum_{t=1}^T \left( \Zobsd_{i,t} 
				- \frac{1}{T} \sum_{s=1}^T \Zobsd_{i,s} \right)^2.
\end{equation}

In the applications in Section~\ref{sec:intro}, one observes a collection of time series and forms a correlation network with adjacency matrix as in Equation~\eqref{eq:def:Rtrue} or~\eqref{eq:def:Robsd}.
We then apply an embedding method to this network.
Our goal in the present work is to address two questions:
\begin{enumerate}
\item Is there a sense in which $\mRtrue$ is driven by a low-dimensional latent structure, analogous to the role played by $\mXtrue$ in Equation~\eqref{eq:RDPG}?
\item Under what conditions does an embedding of the noisy correlation network $\mRobsd$ in Equation~\eqref{eq:def:Robsd} recover this latent structure, and at what rate?
\end{enumerate}

Below, we address both of these questions as they pertain to the adjacency spectral embedding.
Given a network with adjacency matrix $\mA \in \R^{n \times n}$, the $d$-dimensional ASE produces an $n$-by-$d$ matrix whose rows encode $d$-dimensional embeddings of the vertices:
\begin{equation} \label{eq:def:ASE}
\ASE( \mA, d ) = \mV ~|\mD|^{1/2} \in \R^{n \times d}
\end{equation}
where $\mD \in \R^{d \times d}$ is the diagonal matrix of the $d$ largest-magnitude eigenvalues of $\mA$ and $\mV \in \R^{n \times d}$ contains as its columns the corresponding eigenvectors.
Under a variety of network models, $\mA$ can be expressed as in Equation~\eqref{eq:RDPG}, and $\mXtrue$ is naturally viewed as a low-rank structure underlying the observed network.
As such, $\mXtrue$ is a natural target for estimation and inference.
Of course, estimation of $\mXtrue$ is possible only up to right-multiplication by an orthogonal matrix, since for any $\mQ \in \bbO_d$,
\begin{equation*}
\mXtrue \mXtrue^\top = \mXtrue \mQ (\mXtrue \mQ)^\top. 
\end{equation*}
Under suitable assumptions on $\mXtrue$ and $\mE$ in Equation~\eqref{eq:RDPG} \citep[see, e.g.,][]{LyzSusTanAthPri2014,RDPGsurvey,GRDPG,RohChaYu2011,LevLodLev2022,GalJonBerPriRub2024},
\begin{equation*}
\min_{\mQ \in \bbO_d} \left\| \ASE( \mA , d ) \mQ - \mXtrue \right\|_{\tti}
= \op{ 1 } .
\end{equation*}
That is, the ASE recovers the latent structure $\mXtrue$, up to orthogonal non-identifiability, uniformly over the vertices $i \in [n]$.
Our goal is to identify a low-dimensional structure analogous to $\mXtrue$ that gives rise to $\mRtrue$, and to show that the ASE of $\mRtrue$ recovers this structure (up to non-identifiability constraints) under suitable assumptions.
We focus here on the ASE owing to its comparative simplicity, but we anticipate that much of our analysis can be extended to other spectral embeddings and related methods \citep[e.g.,][]{TanPri2018,ModRub2021,GRDPG,node2vec,LinSusIsh2023,SheWanPri2023,ZhaTan2024}.

\subsection{Latent structure in correlation networks}

To understand the low-dimensional structure encoded in $\mRtrue$, we will aim to mimic the construction in Equation~\eqref{eq:RDPG} and write $\mRtrue = \mXtrue \mXtrue^\top$ for some choice of $\mXtrue \in \R^{n \times d}$, where $d \ll n$.
Toward this end, define
\begin{equation} \label{eq:def:Ztruetilde}
\mZtruetilde = \frac{1}{\sqrt{T}} \mSigmatrue^{-1/2} \mZtrue \mM \in \R^{n \times T},
\end{equation}
noting that the $i$-th row of this matrix, $\vZtruetilde{i} \in \R^T$, corresponds to a centered, normalized version of the $i$-th observed time series $\vZtrue{i} \in \R^T$.
Since $\mM$ is idempotent, we may rewrite Equation~\eqref{eq:def:Rtrue} as
\begin{equation} \label{eq:R:embeddingfactorization}
\mRtrue = \left( \frac{1}{\sqrt{T}} \mSigmatrue^{-1/2} \mZtrue \mM \right)
			\left( \frac{1}{\sqrt{T}} \mSigmatrue^{-1/2} \mZtrue \mM \right)^\top
	= \mZtruetilde \mZtruetildetop .
\end{equation}

Applying the discrete Fourier transform, we can express, for any $i \in [n]$ and $t \in [T]$,
\begin{equation} \label{eq:Z:IDFT}
\Ztruetilde_{i,t} = \frac{1}{T} \sum_{k=1}^{T}
        \Ftrue_{i,k} \exp\left\{ \frac{ 2 \pi \imag }{ T } (k-1)(t-1) \right\},
\end{equation}
where $\Ftrue_{i,k}$ denotes the $(k-1)$-th Fourier coefficient of the $i$-th standardized time series $\vZtruetilde{i}$ for each $k \in [T]$.
That is,
\begin{equation} \label{eq:F:DFT}
\Ftrue_{i,k} = \sum_{t=1}^{T} \Ztruetilde_{i,t} \exp\left\{ \frac{- 2 \pi \imag}{T} (k-1)(t-1) \right\}.
\end{equation}
We note that under this convention, $\Ftrue_{i,1}$ is the Fourier coefficient of $\mZtruetilde_i$ associated to frequency zero, which is $T$ times its mean.
Since $\mZtruetilde_i$ is standardized, we have $\Ftrue_{i,1} = 0$.

Denote by $\mOmega \in \C^{T \times T}$ the (unitary) inverse discrete Fourier transform matrix, with entries
\begin{equation*}
\Omega_{k,t} = \frac{1}{\sqrt{T}}\exp\left\{ \frac{2 \pi \imag}{T} (k-1)(t-1) \right\},
~k,t \in [T].
\end{equation*}
Examining Equation~\eqref{eq:Z:IDFT}, we see that we can express $\mZtruetilde$ as
\begin{equation*}
\mZtruetilde = \frac{1}{\sqrt{T} } \mFtrue \mOmega.
\end{equation*}
Substituting this into Equation~\eqref{eq:def:Rtrue:mxprod},
\begin{equation} \label{eq:Rtrue:Fourier}
\mRtrue = \frac{1}{\sqrt{T} } \mFtrue \mOmega
		\left( \frac{1}{\sqrt{T} } \mFtrue \mOmega \right)^\top
= \frac{1}{\sqrt{T} } \mFtrue \mOmega
		\left( \frac{1}{\sqrt{T} } \mFtrue \mOmega \right)^{\HH}
= \frac{1}{T} \mFtrue \mFtrue^{\HH},
\end{equation}
where the second equality follows from the fact that $\mZtrue$ is real
and the last equality follows from the fact that $\mOmega$ is unitary.

Since $\mFtrue$ encodes the Fourier transforms of real sequences, basic symmetry properties of the discrete Fourier transform \citep[][Chapter 8]{OppSch2009}, along with the fact that the rows of $\mZtruetilde$ are centered, imply 
\begin{equation} \label{eq:fourierSymmetry}
\overline{\Ftrue_{i,k}} = 
\begin{cases}
\Ftrue_{i,k} = 0
& \mbox{ if } k=1 \\
\Ftrue_{i,T-k+2} & \mbox{ if } k \in \{2,3,\dots,T\} .
\end{cases}
\end{equation}
Define a matrix $\mK \in \R^{T \times T}$ according to
\begin{equation} \label{eq:def:K}
K_{s,t} = \begin{cases} 1 &\mbox{ if } s=t=1 \\
	1 &\mbox{ if } s + t = T+2 \\
	0 &\mbox{ otherwise, }
\end{cases}
\end{equation}
and observe that by Equation~\eqref{eq:fourierSymmetry}, $\mFtrue^\HH = \left( \mFtrue \mK \right)^\top$. 
Substituting this into Equation~\eqref{eq:Rtrue:Fourier} and observing that $\mK$ is a permutation matrix and thus has a square root $\mK^{1/2} \in \C^{T \times T}$,
\begin{equation} \label{eq:Rtrue:Fourier:real}
\mRtrue = \frac{1}{T} \mFtrue \mK^{1/2} \left( \mFtrue \mK^{1/2} \right)^\top ,
\end{equation}
Basic properties of the Fourier transform yield the following result.
A detailed proof is given in Appendix~\ref{apx:fourier}.
\begin{lemma} \label{lem:FsqrtK:real}
If the matrix $\mZtrue$ has all real entries, then so does the matrix $\mFtrue \mK^{1/2}$, and these entries are given by
\begin{equation} \label{eq:FsqrtK:entry}
\left[ \mFtrue \mK^{1/2} \right]_{i,k}
= \Re\left( \Ftrue_{i,k} \right) - \Im\left( \Ftrue_{i,k} \right) 
~~~i \in [n], k \in [T] .
\end{equation}
\end{lemma}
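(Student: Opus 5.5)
The plan is to fix an explicit square root $\mK^{1/2}$ that respects the block structure of $\mK$, and then compute the entries of $\mFtrue \mK^{1/2}$ directly using the conjugate symmetry in Equation~\eqref{eq:fourierSymmetry}. Since $\mK^{1/2}$ is not unique, the statement should be read for this specific choice. The same choice must also be compatible with Equation~\eqref{eq:Rtrue:Fourier:real}, so I will check that $\mK^{1/2}(\mK^{1/2})^\top = \mK$.

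\textbf{Structure of $\mK$.} $\mK$ is a symmetric permutation matrix and an involution. It fixes the index $1$. When $T$ is even it also fixes $T/2+1$. Every other index $k$ is paired with $k' = T-k+2 \neq k$. After reordering coordinates, $\mK$ is block diagonal, with $1\times 1$ blocks equal to $1$ at the fixed indices and $2\times 2$ blocks $\begin{pmatrix}0&1\\1&0\end{pmatrix}$ on each pair $\{k,k'\}$.

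\textbf{Choice of square root.} On each fixed index I take the entry $1$. On each pair $\{k,k'\}$ I take the block $\begin{pmatrix}a&b\\ b&a\end{pmatrix}$ with
\[
a=\tfrac{1+\imag}{2}, \qquad b=\tfrac{1-\imag}{2}=\bar a .
\]
Then $a^2+b^2 = \tfrac{2\imag-2\imag}{4}=0$ and $2ab = \tfrac{2\cdot 2}{4}=1$, so the block squares to the swap. Each block is symmetric, so $(\mK^{1/2})^\top=\mK^{1/2}$. Together these give $\mK^{1/2}(\mK^{1/2})^\top=\mK$, which is exactly the identity used in Equation~\eqref{eq:Rtrue:Fourier:real}.

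\textbf{Entry computation.} Write $\Ftrue_{i,k}=x+\imag y$.
\begin{itemize}
\item For a paired index $k$, we have $[\mFtrue\mK^{1/2}]_{i,k} = a\Ftrue_{i,k} + b\Ftrue_{i,k'}$. By Equation~\eqref{eq:fourierSymmetry}, $\Ftrue_{i,k'}=\overline{\Ftrue_{i,k}}$, and $b=\bar a$, so this equals $a\Ftrue_{i,k}+\overline{a\Ftrue_{i,k}} = 2\Re(a\Ftrue_{i,k})$. Since $a\Ftrue_{i,k} = \tfrac{1}{2}\bigl((x-y)+\imag(x+y)\bigr)$, this gives $x-y$, which is real and matches Equation~\eqref{eq:FsqrtK:entry}.
\item For $k=1$, the entry is $\Ftrue_{i,1}=0$, which equals $\Re-\Im$ trivially.
\item For $k=T/2+1$ ($T$ even), this index is its own partner, so Equation~\eqref{eq:fourierSymmetry} gives $\overline{\Ftrue_{i,k}}=\Ftrue_{i,k}$. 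Hence $\Ftrue_{i,k}$ is real, the imaginary part vanishes, and the entry $\Ftrue_{i,k}$ again equals $\Re-\Im$.
\end{itemize}

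\textbf{Main obstacle.} The computation itself is routine. The delicate points are specifying the square root so that it is simultaneously symmetric, has conjugate off-diagonal coefficients, and is consistent with Equation~\eqref{eq:Rtrue:Fourier:real}, and not overlooking the Nyquist fixed point when $T$ is even.
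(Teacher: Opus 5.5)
Your proof is correct and is essentially the paper's argument. The paper builds $\mK^{1/2}$ from the spectral decomposition of $\mK$ by sending the eigenvalue $-1$ to $\imag$; on each pair $\{k, T-k+2\}$ this is exactly your block with $a = (1+\imag)/2$ and $b = \bar{a}$. It then evaluates entries using the conjugate symmetry of Equation~\eqref{eq:fourierSymmetry}, splitting $\mFtrue$ into real and imaginary parts where you compute $2\Re(a \Ftrue_{i,k})$ directly. Your write-up is if anything tidier: it explicitly handles the self-paired index $T/2+1$ for even $T$ and checks $\mK^{1/2}(\mK^{1/2})^\top = \mK$, both of which the paper's index bookkeeping glosses over.
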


In light of Lemma~\ref{lem:FsqrtK:real} and Equation~\eqref{eq:Rtrue:Fourier:real}, we see that $\mRtrue$ is indeed encoded by a low-rank structure if $\mFtrue \mK^{1/2}$ is low rank.
Suppose that there exist at most $d_0$ non-zero columns of $\mFtrue$.
That is, suppose that there exists a collection of $d_0$ Fourier basis elements whose span contains all of the rows in $\mZtruetilde$.
Equations~\eqref{eq:fourierSymmetry} and~\eqref{eq:FsqrtK:entry} imply the existence of $\mW \in \bbO_T$ such that the $i$-th row of $\mFtrue \mK^{1/2} \mW$ encodes, up to scaling, the real and imaginary parts of the Fourier coefficients of $\mZtruetilde_i$, and $\mFtrue \mK^{1/2} \mW$ has at most $2d_0$ non-zero columns.
It follows that $\mRtrue$ has rank $d \le 2d_0$, since from Equation~\eqref{eq:Rtrue:Fourier:real}, we have
\begin{equation*}
\mRtrue 
= \frac{1}{T} \mFtrue \mK^{1/2} \left( \mFtrue \mK^{1/2} \mW \right)^\top
= \frac{1}{T} \mFtrue \mK^{1/2} \mW \left( \mFtrue \mK^{1/2} \mW \right)^\top.
\end{equation*}
Thus, we may further write
\begin{equation} \label{eq:Rtrue:lowRank}
\mRtrue 
= \mUtrue \mLambdatrue \mUtrue^\top,
\end{equation}
where $\mLambdatrue = \diag( \lambdatrue{1},\lambdatrue{2},\dots,\lambdatrue{d})$ contains the $d \le 2d_0$ non-zero eigenvalues of $\mRtrue$ on its diagonal and $\mUtrue \in \R^{n \times d}$ has the $d$ corresponding orthonormal eigenvectors as its columns.
It follows that we may define
\begin{equation} \label{eq:def:Xtrue}
\mXtrue = \mUtrue \mLambdatrue^{1/2} \in \R^{n \times d}
\end{equation}
to be the ``population embedding'' of our (centered, scaled) time series data $\mZtruetilde \in \R^{n \times T}$ based on the correlation matrix $\mRtrue$, analogous to the low-rank matrix in Equation~\eqref{eq:RDPG}.
Since
\begin{equation*}
\mXtrue \mXtrue^\top = \mUtrue \mLambdatrue \mUtrue^\top = \mRtrue
= \mZtruetilde \mZtruetildetop
= \left( \frac{1}{\sqrt{T}} \mFtrue \mK^{1/2} \right)
    \left( \frac{1}{\sqrt{T}} \mFtrue \mK^{1/2} \right)^\top 
\end{equation*}
and all entries of $\mFtrue \mK^{1/2}$ are real,
there must exist $\mWtilde \in \bbO_T$ such that
\begin{equation*}
\begin{bmatrix} \mXtrue & \mZero \end{bmatrix} 
= \frac{1}{\sqrt{T}} \mFtrue \mK^{1/2} \mWtilde .
\end{equation*}
We have shown the following.

\begin{lemma} \label{lem:ASElearnedFourier} 
If the standardized time series in the rows of $\mZtruetilde \in \R^{n \times T}$ can be represented in a $d_0$-dimensional Fourier basis,
then there exists a $d \le 2d_0$ and $\mWtilde \in \bbO_T$ such that
\begin{equation*}
\begin{bmatrix} \ASE( \mRtrue, d ) & \mZero \end{bmatrix}
= \frac{1}{\sqrt{T}} \mFtrue \mK^{1/2} \mWtilde .
\end{equation*} 
That is, the $d$-dimensional ASE of $\mRtrue$ corresponds precisely to the Fourier representation of the signals in $\mZtruetilde$, up to orthogonal non-identifiability. 
\end{lemma}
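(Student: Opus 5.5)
The plan is to show that two real $n \times T$ matrices with the same Gram matrix differ by an orthogonal right factor. Set $\mB = T^{-1/2} \mFtrue \mK^{1/2}$. By Lemma~\ref{lem:FsqrtK:real}, $\mB$ is a real matrix, and by Equation~\eqref{eq:Rtrue:Fourier:real}, $\mB\mB^\top = \mRtrue$. Under the $d_0$-sparse Fourier hypothesis, the discussion preceding the lemma shows that $\mRtrue$ has rank $d \le 2d_0$, with spectral decomposition $\mRtrue = \mUtrue \mLambdatrue \mUtrue^\top$.

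Since $\mRtrue = \mB\mB^\top$ is positive semidefinite, its nonzero eigenvalues are positive. The $d$ largest-magnitude eigenvalues are therefore exactly the diagonal of $\mLambdatrue$, and Equation~\eqref{eq:def:ASE} gives $\ASE(\mRtrue,d) = \mUtrue \mLambdatrue^{1/2} = \mXtrue$ (up to the usual choice of eigenbasis, which is itself absorbed into an orthogonal factor). Set $\mA_0 = [\mXtrue \ \mZero] \in \R^{n\times T}$. Then $\mA_0\mA_0^\top = \mRtrue = \mB\mB^\top$.

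The key step is the standard fact that real $n\times T$ matrices with equal Gram matrices $\mA_0\mA_0^\top = \mB\mB^\top$ satisfy $\mA_0 = \mB\mWtilde$ for some $\mWtilde \in \bbO_T$. I would prove it via SVDs. Write $\mB = \mUtrue \mLambdatrue^{1/2} \mV_1^\top$. This is legitimate because $\mB\mB^\top$ has eigenvectors $\mUtrue$ and eigenvalues $\mLambdatrue$, so the left singular vectors and singular values of $\mB$ are $\mUtrue$ and $\mLambdatrue^{1/2}$, and $\mV_1 \in \R^{T\times d}$ has orthonormal columns. Explicitly, take $\mV_1 = \mB^\top \mUtrue \mLambdatrue^{-1/2}$. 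Then $\mV_1^\top\mV_1 = \mI$, and $\mUtrue\mLambdatrue^{1/2}\mV_1^\top = \mUtrue\mUtrue^\top \mB = \mB$, because the column space of $\mB$ equals the range of $\mB\mB^\top$, which is spanned by $\mUtrue$.

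Next, complete $\mV_1$ to a real orthogonal matrix $\mV = [\mV_1 \ \mV_2] \in \bbO_T$ and put $\mWtilde = \mV$. Then
\begin{equation*}
\mB\mWtilde = \mUtrue\mLambdatrue^{1/2}\mV_1^\top[\mV_1 \ \mV_2] = [\mUtrue\mLambdatrue^{1/2} \ \mZero] = [\ASE(\mRtrue,d)\ \mZero],
\end{equation*}
which is the claimed identity.

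The only genuinely delicate points are reality and nonnegativity. Reality of $\mB$ is needed so that $\mWtilde$ can be taken real orthogonal rather than unitary, and this is exactly what Lemma~\ref{lem:FsqrtK:real} supplies. Nonnegativity of the spectrum is needed so that "largest magnitude" in the ASE picks out precisely the $d$ nonzero eigenvalues and $|\mD|^{1/2} = \mLambdatrue^{1/2}$.

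If the eigenspace of $\mRtrue$ is degenerate, then $\mUtrue$ is not unique. Any choice still yields the same argument, with $\mV_1$ defined from that choice, so no further care is needed. I expect the main obstacle to be only bookkeeping: checking that $\mK^{1/2}$ can be chosen so that Lemma~\ref{lem:FsqrtK:real} applies, and that $\mB\mB^\top$ agrees with Equation~\eqref{eq:Rtrue:Fourier:real}. Both are taken from the earlier results.
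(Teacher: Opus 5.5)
Your proposal is correct and follows the same route as the paper: both take $\mFtrue\mK^{1/2}$ to be real by Lemma~\ref{lem:FsqrtK:real}, note that $\mXtrue\mXtrue^\top = \mRtrue = T^{-1}\mFtrue\mK^{1/2}(\mFtrue\mK^{1/2})^\top$, and conclude that the two factors differ by a real orthogonal right factor. The paper only asserts that last standard fact; you prove it with the explicit construction $\mV_1 = \mB^\top\mUtrue\mLambdatrue^{-1/2}$, and you also use positive semidefiniteness to justify why $\ASE(\mRtrue,d) = \mUtrue\mLambdatrue^{1/2}$, a step the paper leaves implicit.
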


Said another way, Lemma~\ref{lem:ASElearnedFourier} states that the Fourier coefficients of the signals in $\mZtruetilde$ correspond to a low-rank latent representation of the vertices in the correlation network $\mRtrue$.

\begin{remark} \label{rem:otherTransforms}
While our discussion above focused on representing the standardized time series $\mZtruetilde$ in the Fourier basis, the same basic argument applies to any basis that we might consider (e.g., wavelets or the discrete cosine transform). 
While we restrict our attention to the Fourier basis for the sake of concreteness, our results and analysis are largely agnostic to this choice of basis.
Key to our results is that there exist some unitary matrix $\mOmega$ such that the rows of $\mZtruetilde \mOmega^\HH$ encode the observed time series in the chosen basis. 
\end{remark}

\subsection{Recovering low-dimensional latent structure} \label{subsec:recoveringLatentStructure}

Lemmas~\ref{lem:FsqrtK:real} and~\ref{lem:ASElearnedFourier} indicate that the noiseless correlation network $\mRtrue$ has a low-rank latent structure given by Fourier transforms of the time series in the rows of $\mZtrue$.
We turn our attention now to the matter of recovering this latent structure from $\mRobsd$, the noisy version of $\mRtrue$, as given in Equation~\eqref{eq:def:Robsd}.
Following previous work on network embeddings \citep{SusTanFisPri2012,LevAthTanLyzYouPri2017,TanPri2018,LevRooMahPri2021}, we wish to study the adjacency spectral embedding of the observed correlation matrix $\mRobsd$ as defined in Equation~\eqref{eq:def:Robsd}.
In particular, write
\begin{equation} \label{eq:Robsd:decomp}
\mRobsd 
= \mUobsd \mLambdaobsd \mUobsd^\top 
	+ \mUobsdperp \mLambdaobsdperp \mUobsdperp^\top,
\end{equation}
where $\mLambdaobsd = \diag(\lambdaobsd{1},\lambdaobsd{2},\dots,\lambdaobsd{d})$ contains the $d$ largest eigenvalues of $\mRobsd$, $\mUobsd \in \R^{n \times d}$ contains the corresponding orthonormal eigenvectors as its columns, and $\mLambdaobsdperp = \diag( \lambdaobsd{d+1},\lambdaobsd{d+2},\dots,\lambdaobsd{n} )$ and $\mUobsdperp \in \R^{n \times (n-d)}$ are defined analogously for the trailing $n-d$ eigenvalues.
The $d$-dimensional ASE of the noisy correlation network is then given by
\begin{equation} \label{eq:def:mXobsd}
\mXobsd = \ASE( \mRobsd, d ) = \mUobsd \mLambdaobsd^{1/2} \in \R^{n \times d},
\end{equation}
and we may hope that if the noise is not too severe, $\mXobsd$ recovers $\mXtrue$ in Equation~\eqref{eq:def:Xtrue} up to non-identifiability. 

Of course, in practice, the dimension $d$ is unknown and must be selected from data.
This model selection task is well-studied in the networks literature under latent space models with conditionally independent edges \citep[see, e.g.,][]{NCV,LiLevZhu2020,HanYanFan2023,ChaSenChe2025,TaiLev2026}.
Myriad selection techniques for PCA and related dimensionality reduction methods have also been developed \citep{Joliffe2002,ZhuGho2006}, which may be applicable as well.
We leave a thorough exploration of this matter for future work, but Appendix~\ref{apx:experiments} includes an experimental investigation of the effects of model misspecification under the setting of Equation~\eqref{eq:def:Zobsd}.

Whether the ASE of $\mRobsd$ recovers the latent structure $\mXtrue$ depend, of course, on the behavior of the noise $\mN = \mZobsd - \mZtrue$.
We assume comparatively little about the specific structure of $\mN$ beyond row-wise independence.
In particular, we allow for dependence within the rows of $\mN$, subject to tail decay conditions.

\begin{assumption} \label{assum:N}
The rows $\mN_1,\mN_2,\dots,\mN_n \in \R^T$ of $\mN$ are mean-zero and independent, with
\begin{equation} \label{eq:assum:nT}
	n = O( T ) .
\end{equation}
For each $i \in [n]$, $\mN_i$ is a $\nu_i$-subgaussian random vector \citep[][Chapter 3]{Vershynin2020}.
That is, for any fixed unit vector $\vu \in \R^T$, the random variable $\vu^\top \mN_i$ is a $\nu_i$-subgaussian random variable \citep{BLM2013,Vershynin2020}.
\end{assumption}

Viewing the subgaussian parameter $\nu_i$ as a proxy for the variance in the $i$-th row of $\mN$, our results require that the noise not overwhelm the signal carried in $\mZtrue$.
In particular, the threshold for this rate of growth is set by the spectrum of the signal matrix $\mRtrue$.
Defining
\begin{equation} \label{eq:def:SNR}
\SNR = \min_{i \in [n]} \frac{ \sigmatrue{i}^2 }{ \nu_i },
\end{equation}
our results require that the condition number $\kappa = \lambdatrue{1}/\lambdatrue{d}$ of $\mRtrue$ not grow too quickly with respect to $\SNR$ and that the smallest signal eigenvalue $\lambdatrue{d}$ is bounded away from zero.

\begin{assumption} \label{assum:Rspec}
The signal eigenvalues $\lambdatrue{1} \ge \lambdatrue{2} \ge \cdots \ge \lambdatrue{d} > 0$ of the true correlation matrix $\mRtrue$ are such that
\begin{equation} \label{eq:assum:lambdad:LB}
    \lambdatrue{d} = \Omegap{ 1 }
    \end{equation}
and the condition number $\kappa = \lambdatrue{1}/\lambdatrue{d}$ is such that
$\kappa = o(T)$ and
\begin{equation} \label{eq:assum:noisepower}
    \frac{ \kappa \log T }{ \SNR } 
    = o( 1 ) .
    \end{equation}
\end{assumption}

The following theorem characterizes how well the embeddings of $\mRobsd$ recover their population counterparts.
The result serves as a correlation network analogue of $(\tti)$-norm estimation error bounds found in, for example, \cite{SusTanFisPri2012,LyzSusTanAthPri2014,GRDPG,TanPri2018,LevLodLev2022}, which rely on edge independence assumptions.
A proof of this theorem can be found in Appendix~\ref{apx:thm:tti}.

\begin{theorem} \label{thm:XobsdXtrue:tti}
Under Assumptions~\ref{assum:N} and~\ref{assum:Rspec}, let $\mXtrue \in \R^{n \times d}$ be as in Equation~\eqref{eq:def:Xtrue}, and let $\mXobsd = \ASE( \mRobsd, d )$.
There exists $\mQ \in \bbO_d$ such that
\begin{equation*} \begin{aligned}
\min_{\mQ \in \bbO_d} \left\| \mXobsd - \mXtrue \mQ \right\|_{\tti}
\le
C \sqrt{ \frac{ d \log T }{ \SNR \lambdatrue{d} } }
\left[ 1
        + \kappa \left( \kappa \sqrt{\frac{ \log n }{ \SNR }} 
                + \sqrt{ \frac{ 1 }{ T } } \right) \| \mRtrue \|_{\tti} 
\right]
\end{aligned} \end{equation*} 
\end{theorem}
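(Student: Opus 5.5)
We follow the standard $(\tti)$-norm perturbation template used for the ASE in the RDPG literature. The structure $\mRobsd = \mRtrue + \mE$ is specific here, since the noise matrix $\mE$ inherits row-wise structure from $\mN$. Write $\mZobsdtilde = T^{-1/2}\mSigmaobsd^{-1/2}\mZobsd\mM$, so that $\mRobsd = \mZobsdtilde\mZobsdtilde^\top$. Decompose
\begin{equation*}
\mRobsd - \mRtrue = (\mZobsdtilde - \mZtruetilde)\mZtruetilde^\top + \mZtruetilde(\mZobsdtilde-\mZtruetilde)^\top + (\mZobsdtilde-\mZtruetilde)(\mZobsdtilde-\mZtruetilde)^\top .
\end{equation*}
The row perturbation $\mZobsdtilde-\mZtruetilde$ has two sources. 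One is the additive noise $T^{-1/2}\mSigmatrue^{-1/2}\mN\mM$. The other is the rescaling error from replacing $\sigmatrue{i}$ by $\sigmaobsd{i}$.

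The first step is to control $\sigmaobsd{i}^2/\sigmatrue{i}^2$ uniformly in $i$. Expand $\sigmaobsd{i}^2 = \sigmatrue{i}^2 + 2T^{-1}\vZtrue{i}^\top\mM\mN_i + T^{-1}\mN_i^\top\mM\mN_i$. The cross term is a subgaussian linear form; bounding it with $\nu_i$-subgaussianity gives $\sqrt{\nu_i\sigmatrue{i}^2\log T/T}$. The quadratic term is handled by Hanson--Wright or a norm bound for subgaussian vectors, giving order $\nu_i$ up to logs. A union bound over $n=O(T)$ rows then yields $\max_i|\sigmaobsd{i}/\sigmatrue{i}-1| = O_P(\sqrt{\log T/\SNR})$.

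The second step controls the noise directly. We bound $\|\mZobsdtilde-\mZtruetilde\|_{\tti} = O_P(\sqrt{\log T/\SNR})$, and by a matrix concentration bound for independent subgaussian rows (using $n=O(T)$) we get $\|\mZobsdtilde-\mZtruetilde\| = O_P(\sqrt{1/\SNR})$ up to log factors. Since $\|\mZtruetilde\| = \sqrt{\lambdatrue{1}}$, this gives $\|\mE\| \lesssim \sqrt{\lambdatrue{1}\log T/\SNR}$. Assumption~\ref{assum:Rspec} makes this $o(\lambdatrue{d})$, so Weyl's inequality separates the top $d$ eigenvalues of $\mRobsd$. Davis--Kahan then gives a $d\times d$ orthogonal $\mW^\star$ from the SVD of $\mUtrue^\top\mUobsd$, with $\|\mUtrue^\top\mUobsd-\mW^\star\| \lesssim \|\mE\|^2/\lambdatrue{d}^2$.

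Next we follow the argument of Lyzinski et al.\ and Levin et al. Write
\begin{equation*}
\mXobsd - \mXtrue\mW^\star = (\mRobsd-\mRtrue)\mUtrue\mLambdatrue^{-1/2}\mW^\star + \text{remainder terms}.
\end{equation*}
The remainder terms involve $\mUtrue(\mUtrue^\top\mUobsd\mLambdaobsd^{1/2} - \mLambdatrue^{1/2}\mW^\star)$, $(\mI-\mUtrue\mUtrue^\top)\mE(\mUobsd-\mUtrue\mW^\star)\mLambdaobsd^{-1/2}$, and similar products. Each remainder is bounded in $(\tti)$-norm by $\|\mUtrue\|_{\tti}$, or by $\|\mRtrue\|_{\tti}$ via $\mUtrue = \mRtrue\mUtrue\mLambdatrue^{-1}$, times spectral quantities. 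These produce the $\kappa^2\sqrt{\log n/\SNR}\,\|\mRtrue\|_{\tti}$ and $\kappa T^{-1/2}\|\mRtrue\|_{\tti}$ contributions. The $T^{-1/2}$ term should arise from the centered quadratic noise term $(\mZobsdtilde-\mZtruetilde)(\mZobsdtilde-\mZtruetilde)^\top$, whose off-diagonal entries concentrate at rate $T^{-1/2}$.

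The leading term is $\ve_i^\top\mE\mUtrue\mLambdatrue^{-1/2}$. Handling this term is the main obstacle, because $\mE$ lacks independent entries. We exploit the fact that $\mZtruetilde^\top\mUtrue\mLambdatrue^{-1/2}$ has orthonormal columns (it is the right singular matrix of $\mZtruetilde$). Consequently $\ve_i^\top(\mZobsdtilde-\mZtruetilde)\mZtruetilde^\top\mUtrue\mLambdatrue^{-1/2}$ is a $d$-dimensional projection of the single row $\mN_i$, whose norm is $O_P(\sqrt{d\log T/(\SNR\lambdatrue{d})})$ after a union bound; this is the leading ``1'' term. The companion term $\ve_i^\top\mZtruetilde(\mZobsdtilde-\mZtruetilde)^\top\mUtrue$ involves all rows of the noise simultaneously. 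For it we condition on row $i$ and use independence across the other rows to treat $\sum_j U^\star_{j\cdot}\mN_j$ as a subgaussian sum. The remaining dependence, coming through row $i$ itself and through the rescaling factors $\sigmaobsd{j}$, is handled by a leave-one-out argument or by direct bounding with the step-one estimates. These pieces carry the $\|\mRtrue\|_{\tti}$ factors.

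Finally, $\mW^\star$ serves as the $\mQ$ in the statement, and the minimum over $\mQ\in\bbO_d$ is bounded by that choice.
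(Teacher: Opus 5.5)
\textbf{Verdict: genuine gap.} The gap is in how you control the variance normalization; the overall architecture is otherwise the paper's.

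\textbf{Where you match the paper.} You use the leading term $(\mRobsd-\mRtrue)\mUtrue\mLambdatrue^{-1/2}$ plus remainders controlled through $\|\sin\mTheta\|^2$ and an eigenvalue-interchange bound. The Procrustes matrix is $\sgn(\mUobsd^\top\mUtrue)$, the transpose of your $\mW^\star$. Your orthonormality observation for $\mZtruetilde^\top\mUtrue\mLambdatrue^{-1/2}$ is exactly the paper's use of Lemma~\ref{lem:tti:mx:highprob} with $\|\mB\|_F=\sqrt{d}$. Your row-independence argument for the companion term matches the paper's treatment of $\mZtruetilde\mN^\top\mSigmatrue^{-1/2}\mUtrue$.

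\textbf{The gap: variance normalization.} You derive the right pieces: a cross term of relative size $\sqrt{\log T/(T\SNR)}$ and a quadratic term of relative size about $1/\SNR$. You then coarsen them to $\delta:=\max_i|\sigmaobsd{i}/\sigmatrue{i}-1|=O_P(\sqrt{\log T/\SNR})$, and with that rate the stated bound does not follow.
\begin{itemize}
\item The pure rescaling part of the leading term has entries $\sum_j(\sigmatrue{i}\sigmatrue{j}/(\sigmaobsd{i}\sigmaobsd{j})-1)\Rtrue_{i,j}\Utrue{j,k}/\sqrt{\lambdatrue{k}}$.
\item Bounding it directly with your step-one estimate gives only $C\delta\sqrt{d/\lambdatrue{d}}\,\|\mRtrue\|_{\tti}=C\sqrt{d\log T/(\SNR\lambdatrue{d})}\,\|\mRtrue\|_{\tti}$.
\item That is a term $\|\mRtrue\|_{\tti}$ inside the bracket with coefficient $1$, rather than $\kappa(\kappa\sqrt{\log n/\SNR}+\sqrt{1/T})$.
\item It is not dominated by the right-hand side when $\kappa=O(1)$ and $\lambdatrue{d}\to\infty$, since $\|\mRtrue\|_{\tti}\ge\sqrt{\lambdatrue{d}}$.
\end{itemize}
You need the sharp rate $\log n/\SNR+\sqrt{\log n/(T\SNR)}$ of Lemma~\ref{lem:sigmaprod:ratio}. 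The same Cauchy--Schwarz step then yields exactly $\sqrt{d\log n/(\SNR\lambdatrue{d})}\,(\sqrt{\log n/\SNR}+\sqrt{1/T})\,\|\mRtrue\|_{\tti}$, as at the end of the proof of Lemma~\ref{lem:RobsdRtrueULam:mxtti}.

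\textbf{Where the $\sqrt{1/T}$ terms actually come from.} The $\sqrt{1/T}$ there is the linear fluctuation $2T^{-1}\mN_i^\top\mM\vZtrue{i}$ of $\sigmaobsd{i}^2$. The $\kappa\sqrt{1/T}$ term enters through the $\|\mRtrue\|_F\sqrt{\log T/(T\SNR)}$ term of Lemma~\ref{lem:UEU:frobenius}. That term collects this ratio error together with the linear signal--noise term. Contrary to your guess, neither comes from the quadratic noise term. Its off-diagonal part is only of order $\sqrt{d/T}/(\SNR\sqrt{\lambdatrue{d}})$ up to logarithms and carries no $\|\mRtrue\|_{\tti}$ factor.

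\textbf{Consequence for your spectral step.} $\mZobsdtilde-\mZtruetilde$ contains $(\mSigmatrue^{1/2}\mSigmaobsd^{-1/2}-\mI)\mZtruetilde$, whose spectral norm can be as large as $\delta\sqrt{\lambdatrue{1}}$. The $1/\SNR$ part of $\delta$ is a systematic bias: $T^{-1}\|\mM\mN_i\|^2$ has mean of order $\nu_i$ for, e.g., i.i.d.\ entries. Hence your claim $\|\mZobsdtilde-\mZtruetilde\|=O_P(\SNR^{-1/2})$ is false in general. Also, $\|\mRobsd-\mRtrue\|$ genuinely contains a term of order $\lambdatrue{1}/\SNR$ (cf.\ Lemma~\ref{lem:RobsdRtrue:spectral}).
\begin{itemize}
\item With the sharp $\delta$ and $\lambdatrue{1}\le n=O(T)$, you still get $\|\mRobsd-\mRtrue\|=o(\lambdatrue{d})$ under Assumption~\ref{assum:Rspec}.
\item With your $\delta$, the $\lambdatrue{1}\delta$ term would require $\kappa\sqrt{\log T/\SNR}=o(1)$, which is not assumed.
\end{itemize}

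\textbf{One remainder term needs a separate bound.} The remainder $(\mI-\mUtrue\mUtrue^\top)\mE(\mUobsd-\mUtrue\mW^\star)\mLambdaobsd^{-1/2}$ is not controlled by $\|\mUtrue\|_{\tti}$ or $\|\mRtrue\|_{\tti}$ times spectral quantities. It needs a bound on $\|\mE\|_{\tti}$. Since the rows of $\mZtruetilde$ are unit vectors, you have $\|\mE\|_{\tti}\le\|\mZobsdtilde-\mZtruetilde\|_{\tti}\|\mZobsdtilde\|+\|\mZobsdtilde-\mZtruetilde\|$, and your setup supplies both factors. Keeping this term is in fact more careful than the paper. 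The paper's third-term step replaces $\mRobsd\mUtrue$ by $\mUobsd\mLambdaobsd\mUobsd^\top\mUtrue$, silently dropping $\mUobsdperp\mLambdaobsdperp\mUobsdperp^\top\mUtrue\mLambdatrue^{-1/2}$.
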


It is instructive to consider the upper bound in Theorem~\ref{thm:XobsdXtrue:tti} when $\kappa$ and $d$ are both bounded above by constants.
Using basic properties of the $(\tti)$-norm \citep{CaiZha2018,CapTanPri2019} and the fact that $\mZtruetilde$ has standardized rows, we have
\begin{equation*}
\| \mRtrue \|_{\tti} = \| \mZtruetilde \mZtruetildetop \|_{\tti}
\le \| \mZtruetilde \|_{\tti} \| \mLambdatrue^{1/2} \|
= \sqrt{ \lambdatrue{1} }.
\end{equation*}
It follows that when $\max\{ \kappa, d \} = O(1)$, we can rewrite the bound in Theorem~\ref{thm:XobsdXtrue:tti} as
\begin{equation} \label{eq:XobsdXtrue:tti:constantStuff}
\min_{\mQ \in \bbO_d} \left\| \mXobsd - \mXtrue \mQ \right\|_{\tti}
\le
C \sqrt{ \frac{ \log T }{ \SNR } }
\left( \sqrt{\frac{1}{\lambdatrue{d}}}
	+ \sqrt{\frac{ \log n }{ \SNR }} + \sqrt{ \frac{ 1 }{ T } } \right) ,
\end{equation}
and we see that consistent estimation occurs under Assumption~\ref{assum:Rspec}.

Among previous work, most similar to the setting of Theorem~\ref{thm:XobsdXtrue:tti} is the literature on heteroscedastic PCA, which considers a setting similar to Equation~\eqref{eq:def:Zobsd}, where one observes $\mZobsd = \mZtrue + \mN$, and aims to recover the left singular vectors of $\mZtrue$.
When the entries within a row of $\mN$ differ in their variances, as we allow in the present work, ``vanilla'' PCA fails due to bias in the diagonal entries of the empirical covariance matrix.
Methods for correcting this bias include zeroing out the diagonal of the empirical covariance before computing its eigenvectors \citep{AbbFanWan2022,CaiLiChiPooChe2021} and iterative approaches that seek to impute or otherwise debias these diagonal entries \citep{ZhaCaiWu2022,ZhoChe2025}.
Importantly, the estimation problem considered in Theorem~\ref{thm:XobsdXtrue:tti} differs from heteroscedastic PCA (and many other related subspace estimation tasks) in its target of inference: we aim to estimate the scaled leading eigenvectors of $\mRtrue$ (i.e., $\mXtrue$ in Theorem~\ref{thm:XobsdXtrue:tti}; see also Lemma~\ref{lem:ASElearnedFourier}), while PCA aims to estimate the left singular vectors of $\mZtrue$.
Given this difference, Theorem~\ref{thm:XobsdXtrue:tti} cannot be directly compared to results for heteroscedastic PCA.
Nonetheless, it is informative to compare our assumptions with those used in this related literature.

\cite{ZhaCaiWu2022} consider the setting where the rows of $\mZtrue$ are drawn i.i.d.~from a distribution with rank-$d$ covariance matrix $\mDelta_0 \in \R^{T \times T}$, and one observes $\mZobsd = \mZtrue + \mN$ where the rows of $\mN$ are i.i.d.~mean zero with independent entries, but these entries are allowed to differ in their variances.
The goal is to estimate the signal eigenvectors of $\mDelta_0$.
\cite{CaiLiChiPooChe2021,YanCheFan2024,ZhoChe2025}
consider a similar setting, with relaxed assumptions on the noise $\mN$ more in line with the tail bounds of our Assumption~\ref{assum:N}, but still requiring entrywise independence in $\mN$.
We note that \cite{CaiLiChiPooChe2021,YanCheFan2024} allow for missing entries in $\mZobsd$, which we do not consider here, though we expect that low-rank imputation methods as used in \cite{LiLevZhu2020} can be applied in the setting of Theorem~\ref{thm:XobsdXtrue:tti}.

Most similar in spirit to our assumptions are \cite{AbbFanWan2022,AgtLubPri2022}, both of which allow for dependence within rows of $\mN$ and impose tail bounds similar to our Assumption~\ref{assum:N}.
Both also require SNR-related assumptions that are superficially similar to our Assumption~\ref{assum:Rspec}, but are in fact largely incomparable.
Assumption~\ref{assum:Rspec} concerns $\SNR$, which measures how the signal present in individual rows of $\mZtrue$, as measured by $\sigmatrue{i}^2$, compare to the variance-like quantity $\nu_i$, which describes the tail behavior of the $i$-th row of the noise matrix $\mN$.
In contrast, \cite{AbbFanWan2022,AgtLubPri2022} measure SNR by comparing $\lambdatrue{d}$ to the variance of the noise (i.e., $\nu_1,\nu_2,\dots,\nu_n$ in our notation).
\cite{AbbFanWan2022} and \cite{AgtLubPri2022} also require assumptions on the coherence of the left singular vectors of $\mZtrue$.
It is interesting to note that Theorem~\ref{thm:XobsdXtrue:tti} requires no analogous assumptions on the eigenstructure of $\mRtrue$.
Nor does it require assumptions on how the covariances within the rows of $\mN$ interact with the signal eigenstructure, as required in \cite{AgtLubPri2022}.

\section{Experiments} \label{sec:expts}

We turn now to a brief exploration of our theoretical results.
In particular, we are interested in verifying the estimation bound in Theorem~\ref{thm:XobsdXtrue:tti}, which predicts the rate at which $\ASE( \mRobsd, d )$ should recover $\mXtrue$ once we account for orthogonal non-identifiability.
When evaluating an estimate $\mY \in \R^{n \times d}$ of $\mXtrue$, we will take advantage of the fact that, per Lemma~\ref{lem:ASElearnedFourier}, $\mXtrue$ and $\mZtruetilde$ are equal up to orthogonal rotation, and assess estimation error by
\begin{equation} \label{eq:procrustes}
\min_{\mW \in \bbO_T}
\left\| \begin{bmatrix} \mY & \mZero \end{bmatrix} \mW 
	- \mZtruetilde \right\|_{\tti},
\end{equation}
where we have appended $T-d$ columns of zeros to $\mY$ so that it is conformal with $\mZtruetilde$.

In what follows, we compare the ASE against two baseline methods for estimating $\mZtruetilde$.
The first is based on applying $d$-dimensional PCA to the centered matrix of time series $\mZobsd \mM \in \R^{n \times T}$ followed by standardizing the rows.
Note that we perform a scaled PCA, with the eigenvectors scaled by the square roots of the eigenvalues.
In essence, this PCA-based estimate reverses the steps of the ASE by applying eigenvalue truncation {\em before} performing row normalization.
Our second comparison is with a na\"ive method, whereby we simply use
\begin{equation*}
\mZobsdtilde = 
\frac{1}{\sqrt{T}} \mSigmaobsd^{-1/2} \mZobsd \mM \in \R^{n \times T},
\end{equation*}
as our estimate of $\mZtruetilde$.
While such an estimate fails to yield dimensionality reduction the way that the ASE or other PCA-based methods might, it serves as a baseline comparison to indicate whether an estimator is successfully removing noise from the observed signals.

\subsection{Effect of variance on estimation} \label{subsec:expt:variance}

We begin by examining how the behavior of the rows of $\mN$ influence estimation accuracy.
For a given number of vertices $n$, number of time series $T$, and a number of signal frequencies $d_0 < T/2$, we generate $\mZtrue$ by choosing $d_0$ Fourier basis element indices from the set $\{2,3,\dots,\lfloor T/2\rfloor\}$ uniformly at random without replacement.
Letting $\calK$ denote this index set, for each $i \in [n]$, we generate $\mZtrue_i$ by populating the real and imaginary components of the Fourier coefficients indexed by $\calK$  with i.i.d.~draws from a standard normal.
To ensure that $\mZtrue$ has all real elements, for each $k \in \calK$, we take the basis element with index $T-k+2$ to have Fourier coefficient equal to the complex conjugate of the coefficient for index $k$, per Equation~\eqref{eq:fourierSymmetry}
The result is that the $n$ rows of $\mZtrue \in \R^{n \times T}$ lie in the span of a $d_0$-dimensional Fourier basis, and $\mZtrue$ has rank either $2d_0$ or $2d_0-1$, depending on the parity of $T$ and, if $T$ is even, whether or not we have selected the basis element that is its own complex conjugate.
Finally, we rescale $\mZtrue$ so that $\| \mZtrue \|_F = \sqrt{n}$.
That is, $\mZtrue$ has average row norm equal to one.

Our upper bound in Theorem~\ref{thm:XobsdXtrue:tti} depends on the variance of the rows of $\mN$ via the parameter $\SNR$ defined in Equation~\eqref{eq:def:SNR}, which captures the minimum, over all $i \in [n]$, of the ratio of power in $\vZtrue{i} \in \R^T$ to the variance-like parameter $\nu_i$.
To explore the effect of $\SNR$, we generate each row $\mN_i$ of $\mN$ 
$i\in[n]$, by drawing the entries of $\mN_i$ i.i.d.~according to a mean zero normal with variance $\nu \| \vZtrue{i} \|^2$.
Under this setup, the ratio $\nu_i/\sigmatrue{i}^2$ is constant over all $i\in[n]$ and we have $\SNR=\nu$.
We evaluate the $(\tti)$-norm between our estimates and $\mZtruetilde$ after the best Procrustes alignment, as in Equation~\eqref{eq:procrustes}.
We repeated this procedure $50$ times for each combination of conditions.
The results of this experiment are displayed in Figure~\ref{fig:exactSNR:tti-by-var}, which shows $(\tti)$-norm error as a function of the variance parameter $\nu$ for several choices of $d_0$.
Examining the figure, we observe that the ASE out-performs both our ``na\"{i}ve'' baseline and the row-normalized PCA estimate, with the largest gap holding at lower noise levels (i.e., lower entrywise variance) and lower values of $d_0$.
Most importantly, we note that the behavior of the $(\tti)$-norm error with respect to the entrywise variance matches the rate predicted by Theorem~\ref{thm:XobsdXtrue:tti}.
Noting that $\SNR=1/\nu$ in the present setting, we see that estimation error decays approximately as the square root of $1/\SNR$ for all four choices of $d_0$.

\begin{figure} 
    \centering
    \includegraphics[width=\textwidth]{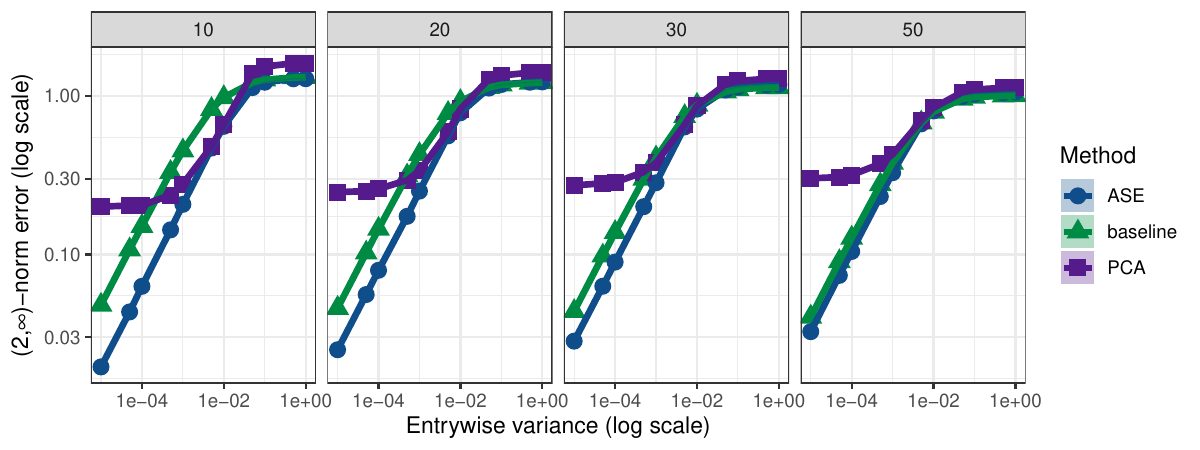}
    \caption{Estimation error in $(\tti)$-norm as a function of variance parameter $\nu$ by the ASE (blue circles), PCA (purple squares) and the na\"ive baseline (green triangles), when applied to $n=200$ time series of length $T=200$ for four choices of Fourier basis size $d_0 = 10,20,30,50$.}
    \label{fig:exactSNR:tti-by-var}
\end{figure}

The quantity $\SNR$ in Theorem~\ref{thm:XobsdXtrue:tti} is defined as a minimum over all $i \in [n]$ of an SNR-like quantity.
This raises the question as to whether the minimum in fact drives the convergence rate.
To assess this, we modify the experiment described above so that one randomly-chosen row $i_0 \in [n]$ of $\mN$ is generated with entries i.i.d.~from a mean zero normal with variance $\alpha \nu \| \vZtrue{i_0} \|^2$.
When $\alpha=1$, we recover the experiment from Figure~\ref{fig:exactSNR:tti-by-var}.
Increasing $\alpha$ the variance of the $i_0$-th row of $\mN$ compared to the rest, and decreases $\SNR$.
With $n=200,T=500$ and $d_0=20$, we performed this experiment for $\nu=10^{-6},10^{-5}$ and $10^{-4}$ and varying choices of $\alpha$, with $50$ Monte Carlo trials for each combination of conditions.
Figure~\ref{fig:heteroSNR:tti-by-inflation} summarizes the results, showing $(\tti)$-norm estimation error as a function of the inflation factor $\alpha$.
When $\nu$ is not too big (i.e., the left and middle plots), ASE estimation error increases as approximately the square root of $\alpha$, in line with the predictions of Theorem~\ref{thm:XobsdXtrue:tti}.
Examining the right-most subplot, which corresponds to higher entrywise variance $\nu$, we see that performance degrades nearly to the point of being indistinguishable from the na\"ive baseline, with the ASE and PCA-based method performing similarly poorly.
Additional experiments exploring the effect of Gaussian versus Laplacian noise are presented in Appendix~\ref{apx:experiments}.

\begin{figure} 
    \centering
    \includegraphics[width=\textwidth]{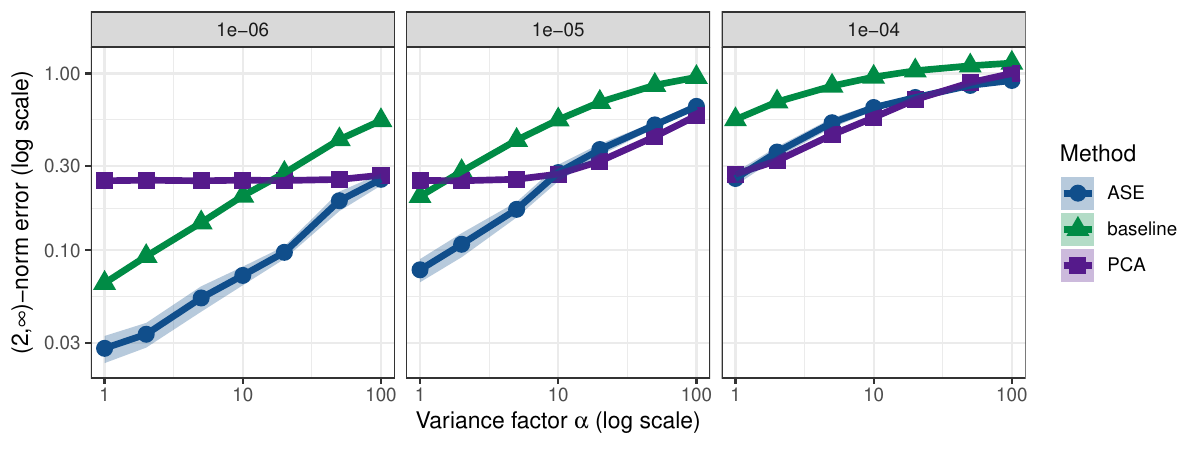}
    \caption{Estimation error in $(\tti)$-norm as a function of variance factor $\alpha$ by the ASE (blue circles), PCA (purple squares) and the na\"ive baseline (green triangles), as applied to $n=200$ time series of length $T=500$ with Fourier basis size $d_0 = 20$. Shaded regions indicate two standard errors of the mean.}
    \label{fig:heteroSNR:tti-by-inflation}
\end{figure}

\subsection{Effect of dimension} 

The rate in Theorem~\ref{thm:XobsdXtrue:tti} suggests that the ASE estimation error should also depend on the model dimension (i.e., twice the number of signal frequencies $d_0$), with estimation error growing as $\sqrt{d_0}$.
To investigate this, we continue with the same experimental setup as the experiment in Figure~\ref{fig:exactSNR:tti-by-var}, this time with fixed network size $n=1200$ and time series length $T=1800$, and varying the number of signal frequencies $d_0$.
For each combination of conditions, we performed 50 indepednet Monte Carlo trials and recorded the mean $(\tti)$-estimation error.
The results of this experiment is summarized in Figure~\ref{fig:exactSNR:tti-by-d0} for three different choices of noise variance $\nu$.
We see that at higher variances (i.e., $\nu=0.001$ on the right), none of our three estimation methods perform especially well, but at lower variances, the ASE outperforms both the PCA-based method and the na\"ive baseline, with estimation error obeying the $\sqrt{d}$-rate predicted by our theory.

\begin{figure} 
    \centering
    \includegraphics[width=\textwidth]{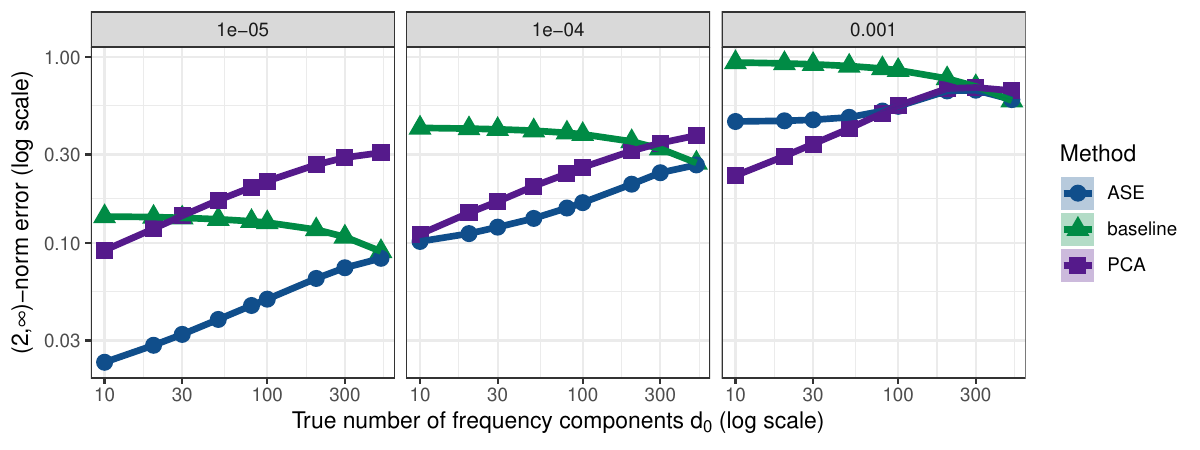}
    \caption{Estimation error in $(\tti)$-norm as a function of the number of frequencies $d_0$ for ASE (blue circles), PCA (purple squares) and na\"ive baseline (green triangles), when applied to $n=1200$ time series of length $T=1800$ under three different choices of noise variance $\nu$. }
    \label{fig:exactSNR:tti-by-d0}
\end{figure}

\subsection{Effect of spectrum of $\mRtrue$}

Examining the bound in Theorem~\ref{thm:XobsdXtrue:tti}, the estimation rate is influenced by $\lambdatrue{d}$, the smallest signal eigenvalue in $\mRtrue$.
Thus, we wish to conduct a numerical experiment similar to those above, this time varying $\lambdatrue{d}$ while keeping other model parameters fixed.
Ideally, this would include the condition number $\kappa$ of $\mRtrue$.
Unfortunately, the fact that $\mZtruetilde$ have unit-norm rows complicates such an experiment.
By construction, the rows of $\mZtruetilde$ are all unit norm, and by Lemma~\ref{lem:ASElearnedFourier}, we have $\| \mZtruetilde \|_F^2 = \sum_{k=1}^d \lambdatrue{k}$.
Thus, with $\lambdatrue{1}=\lambdatrue{2}=\cdots=\lambdatrue{d}$, so that $\kappa=1$, the number of vertices $n$ and the number of signal eigenvalues $d$ completely determines $\lambdatrue{d}$ (and all other eigenvalues of $\mRtrue$).
A further challenge arises from the fact that such an experiment requires that we generate $\mZtruetilde$ not only to have all $d$ singular values the same, but to have row sums equal to $0$ and all rows with the same norm.
Rather than trying to satisfy all of these constraints exactly, we generate the rows of $\mZtrue$ as in the previous two experiments, but standardize them so that $\mZtrue=\mZtruetilde$ 
In practice, so long as the number of signal frequencies $d_0$ is not large, this rarely yields a condition number larger than $5$, so that, at least for the purposes of asymptotics, we may treat $n/d$ as equal to $\lambdatrue{d}$.
Note that under this setting, for a fixed model rank $d$, increasing $n$ is equivalent to increasing $\lambdatrue{d}$.
Once we have generated $\mZtrue$, we draw the entries of the noise $\mN$ i.i.d.~from a mean zero normal with variance $\nu = 1e-4$ and construct $\mZobsd = \mZtrue + \mN$.
Note that owing to our normalization constraint on $\mZtrue$ in this experiment, this variance is not directly comparable to that used in our first two experiments.
The results of this experiment are displayed in Figure~\ref{fig:varylambda:tti-by-n}.
Examining the figure, we see that the behavior of the ASE is consistent with the Theorem~\ref{thm:XobsdXtrue:tti}.
When $\kappa$ is constant, Equation~\eqref{eq:XobsdXtrue:tti:constantStuff} shows that increasing $\lambdatrue{d}$ is not sufficient to ensure convergence of the ASE without also allowing $\SNR$ and $T$ to grow, both of which are held fixed in this experiment.
We note that this problem setting is artificially easy for the PCA-based method.
Since the rows of $\mZtrue$ are already normalized, the PCA-based method is, in essence, just a truncation of $\mZobsd$.
As such, it is not surprising that the PCA-based method converges at a $1/\sqrt{\lambdatrue{d}}$ rate in this experiment.

\begin{figure} 
    \centering
    \includegraphics[width=\textwidth]{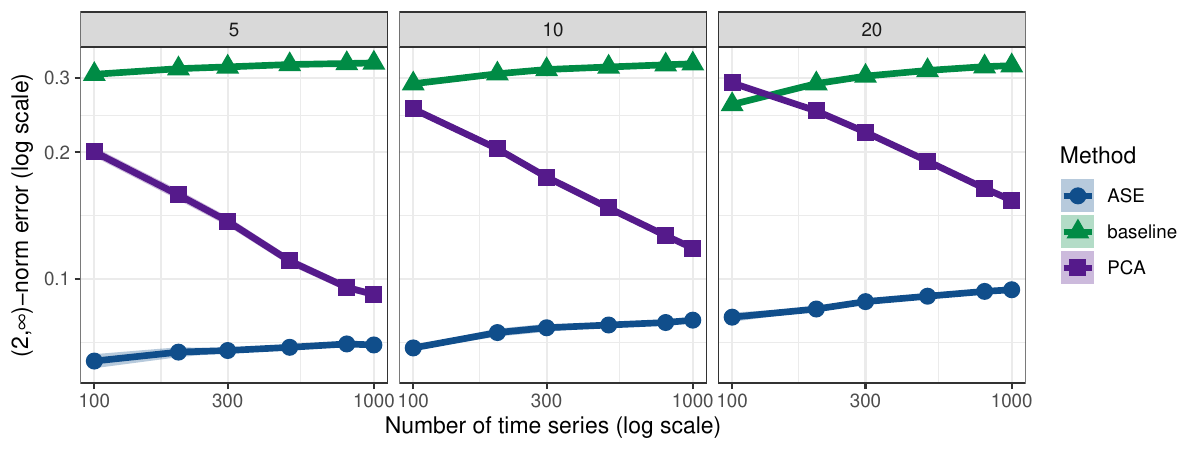}
    \caption{Estimation error in $(\tti)$-norm as a function of the number of time series $n$ by the ASE (blue circles), PCA (purple squares) and the na\"ive baseline (green triangles), as applied to time series of length $T=1000$ under three different choices of $d_0$.}
    \label{fig:varylambda:tti-by-n}
\end{figure}

\section{Discussion and Conclusion} \label{sec:conclusion}

Network embedding methods with implicit edge independence assumptions are frequently applied to correlation networks that clearly violate edge independence.
Practitioners nonetheless frequently apply these methods to correlation networks, under the intuition that the dependence among the edges is ignorable.
We have presented an analysis of the behavior of spectral embeddings as applied to correlation networks that puts this intuition on firm ground.
We have shown that correlation networks built from collections of time series have a clear interpretation as having latent positions encoding the Fourier coefficients of the time series, and we have shown that when noisily-observed time series are used to build a correlation network, the ASE provably recovers the true underlying low-rank structure.

There are several interesting directions for future work.
We conjecture that the rate in Theorem~\ref{thm:XobsdXtrue:tti} is minima optimal, up to logarithmic factors and dependence on the condition number $\kappa$.
Establishing a minimax lower bound is the focus of ongoing follow-up work, starting from the arguments developed in \cite{YanLev2023,YanLev2024_NeurIPS}.
We expect that the dependence on $\kappa$ can be relaxed, and perhaps removed entirely, with suitably careful bookkeeping \citep[see, e.g.,][]{Pensky2024}.
If dependence on $\kappa$ cannot be eliminated, future work should investigate whether approaches similar to \cite{ZhoChe2025} can be used as a preprocessing step to remove the effect of condition number.
As alluded to when comparing our results to the heteroscedastic PCA literature, it is interesting to note that Theorem~\ref{thm:XobsdXtrue:tti} does not suggest a bias introduced by heteroscedasticity in the rows of $\mN$. 
Future work should more carefully analyze how, if at all, heteroscedasticity contributes to the estimation rate in Theorem~\ref{thm:XobsdXtrue:tti} and whether, in the event that there is structure present in the eigenvectors of $\mRtrue$ (i.e., the left singular vectors of $\mZtruetilde$), this structure can be leveraged to achieve improved estimation, analogously to recent results in low-rank matrix estimation under independent noise \citep{YanLev2024_NeurIPS,YanLev2025_AISTATS}.
Finally, as discussed in Section~\ref{sec:results}, embeddings of correlation networks and PCA applied directly to $\mZtrue$ both provably learn low-rank representations of the time series.
Future work should characterize when one or the other of these two methods is preferable, analogous to the study in \citep{TwoTruths} comparing the ASE to the Laplacian spectral embedding \citep{TanPri2018}.

\paragraph{Acknowledgements.}
The author would like to thank
Jes\'{u}s Arroyo,
Avanti Athreya,
Nathan Aviles,
Joshua Cape,
Alex Hayes,
Elizaveta Levina,
Zachary Lubberts,
Vince Lyzinski,
Karl Rohe,
Joseph Salzer,
Roddy Taing,
Hao Yan,
Wen Zhou, 
Ji Zhu
and the attendees of the UW-Madison IFDS Seminar
for helpful discussions and suggestions that have greatly improved this paper.
The author was supported in part by the University of Wisconsin-Madison, Office of the Vice Chancellor for Research and Graduate Education with funding from the Wisconsin Alumni Research Foundation.

\bibliographystyle{plainnat}
\bibliography{biblio}

\newpage

\appendix

\section{Fourier Coefficients and Low-Rank Truncation of $\mRtrue$} \label{apx:fourier}

Here we prove Lemma~\ref{lem:FsqrtK:real}, which implies that we may interpret the low-rank truncation of a correlation matrix as capturing the Fourier coefficients of the time series, up to orthogonal rotation.
We remind the reader that $\mK \in \R^{T \times T}$ denotes the matrix defined in Equation~\eqref{eq:def:K}.

\begin{proof}[Proof of Lemma~\ref{lem:FsqrtK:real}]
We begin by noting that by construction, $\mK \ve_1 = \ve_1$, and for all $\ell \in \{2,3,\dots,T\}$,
$\mK \ve_\ell = \ve_{T-\ell+2}$.
It follows that $\mK$ has eigendecomposition
\begin{equation*}
\mK = \ve_1 \ve_1^\top
+ \sum_{\ell=2}^{\lceil (T-1)/2 \rceil} 
	\frac{ \left(\ve_\ell + \ve_{T-\ell+2}\right) \left(\ve_\ell + \ve_{T-\ell+2}\right)^\top }{ \| \ve_\ell + \ve_{T-\ell+2} }
	- \sum_{\ell=2}^{\lfloor (T-1)/2 \rfloor} 
		\frac{ \left(\ve_\ell - \ve_{T-\ell+2}\right) \left(\ve_\ell - \ve_{T-\ell+2}\right)^\top }{ \| \ve_\ell - \ve_{T-\ell+2} \| } ,
\end{equation*}
and thus we can construct the square root of $\mK$ according to
\begin{equation} \label{eq:sqrtK:decomp}
\begin{aligned}
\mK^{1/2}
&= \ve_1 \ve_1^\top
+ \sum_{\ell=2}^{\lceil (T-1)/2 \rceil} 
	\frac{ \left(\ve_\ell + \ve_{T-\ell+2}\right) 
				\left(\ve_\ell + \ve_{T-\ell+2}\right)^\top }
		{ \| \ve_\ell + \ve_{T-\ell+2} \| } \\
&~~~~~~~~~~~+ \sum_{\ell=2}^{\lfloor (T-1)/2 \rfloor}
	\frac{ \imag \left(\ve_\ell - \ve_{T-\ell+2}\right) 
				\left(\ve_\ell - \ve_{T-\ell+2}\right)^\top }
		{ \| \ve_\ell - \ve_{T-\ell+2} \| } .
\end{aligned} \end{equation}

Recall from Equation~\eqref{eq:F:DFT} that the rows of $\mFtrue$ encode the Fourier coefficients of the corresponding rows of $\mZtrue$, which is assumed to have all real entries.
Since the rows of $\mZtruetilde$ are centered by construction, we have
\begin{equation} \label{eq:DFT:constantTerm}
\Ftrue_{i,1} = \sum_{t=1}^{T} \Ztruetilde_{i,t} = 0
\end{equation} 
for all $i \in [n]$, so that
\begin{equation} \label{eq:e1}
\mFtrue \ve_1 \ve_1^\top = \mZero .
\end{equation} 
On the other hand, recalling Equation~\eqref{eq:fourierSymmetry}, for all $i \in [n]$ and $\ell \in \{2,3,\dots,T\}$, 
\begin{equation} \label{eq:conjugateSymmetry}
\Ftrue_{i,\ell} = \conjugate{\Ftrue_{i,T-\ell+2}} .
\end{equation}
It follows that for any $i \in [n]$ and any $\ell \in \{2,3,\dots,\lfloor (T-1)/2 \rfloor\}$,
\begin{equation*} 
\left[ \left(\mFtrue + \conjugate{\mFtrue}\right) \left(\ve_\ell - \ve_{T-\ell+2}\right) \right]_i
= \Ftrue_{i,\ell} - \Ftrue_{i,T-\ell+2} + \conjugate{\Ftrue_{i,\ell}} - \conjugate{\Ftrue_{i,T-\ell+2}}
= 0 ,
\end{equation*}
and therefore, using the decomposition in Equation~\eqref{eq:sqrtK:decomp} and recalling Equation~\eqref{eq:e1},
\begin{equation} \label{eq:realFsqrtK}
\frac{\mFtrue + \conjugate{\mFtrue}}{2} \mK^{1/2}
= \frac{1}{2} \sum_{\ell=2}^{\lceil (T-1)/2 \rceil} 
	\left( \frac{ \mFtrue + \conjugate{\mFtrue} }{2} \right)
		\left( \ve_\ell + \ve_{T-\ell} \right)
		\left( \ve_\ell + \ve_{T-\ell} \right)^\top
\in \R^{n \times T}.
\end{equation}
We note that this quantity is purely real, since $(\mFtrue + \conjugate{\mFtrue})/2$ is the real part of $\mFtrue$.

Similarly, for any $i \in [n]$ and any $\ell \in \{2,3,\dots,\lceil (T-1)/2 \rceil\}$,
\begin{equation*} 
\left[ \left(\mFtrue - \conjugate{\mFtrue}\right) \left(\ve_\ell + \ve_{T-\ell+2}\right) \right]_i
= \Ftrue_{i,\ell} + \Ftrue_{i,T-\ell+2} 
	- \left(\conjugate{\Ftrue_{i,\ell}} + \conjugate{\Ftrue_{i,T-\ell+2}} \right)
= 0,
\end{equation*}
from which
\begin{equation} \label{eq:imagFsqrtK}
\frac{\mFtrue - \conjugate{\mFtrue}}{2} \mK^{1/2}
= \frac{\imag}{2} \sum_{\ell=2}^{\lfloor (T-1)/2 \rfloor}
	\left( \frac{ \mFtrue - \conjugate{\mFtrue} }{ 2 } \right)\left(\ve_\ell - \ve_{T-\ell+2}\right)
					\left(\ve_\ell - \ve_{T-\ell+2}\right)^\top,
\end{equation}
which is purely real, since $\mFtrue - \conjugate{\mFtrue}$ is purely imaginary.
Decomposing $\mFtrue$ into its real and imaginary parts as
\begin{equation*}
\mFtrue = \frac{\mFtrue + \conjugate{\mFtrue}}{2} + \frac{\mFtrue - \conjugate{\mFtrue}}{2},
\end{equation*}
Equations~\eqref{eq:realFsqrtK} and~\eqref{eq:imagFsqrtK} imply that
\begin{equation*}
\mFtrue \mK^{1/2}
= \left( \frac{\mFtrue + \conjugate{\mFtrue}}{2} \right) \mK^{1/2}
+ \left( \frac{\mFtrue - \conjugate{\mFtrue}}{2} \right) \mK^{1/2}
\end{equation*}
has no imaginary component, and thus $\mFtrue \mK^{1/2}$ has all real entries.

Turning our attention to Equation~\eqref{eq:FsqrtK:entry}, we again use the fact that
\begin{equation*}
\frac{\mFtrue - \conjugate{\mFtrue}}{2} \mK^{1/2}
= \frac{\imag}{2} \sum_{\ell=2}^{\lfloor (T-1)/2 \rfloor}
    \left( \frac{\mFtrue - \conjugate{\mFtrue}}{2} \right)
			\left(\ve_\ell - \ve_{T-\ell+2}\right)
			\left(\ve_\ell - \ve_{T-\ell+2}\right)^\top
\end{equation*}
to write, for $i \in [n]$ and $k \in \{2,3,\dots,T \}$,
\begin{equation*} \begin{aligned}
\left[ \frac{\mFtrue - \conjugate{\mFtrue}}{2} \mK^{1/2} \right]_{i,k}
&= \frac{\imag}{2} \sum_{\ell=2}^{\lfloor (T-1)/2 \rfloor}
	\sum_{m=1}^T
	\left( \frac{\mFtrue - \conjugate{\mFtrue}}{2} \right)_{i,m}
	\left(\ve_\ell - \ve_{T-\ell+2}\right)_m
	\left(\ve_\ell - \ve_{T-\ell+2}\right)_k \\
&= \frac{\imag}{2} \sum_{\ell=2}^{\lfloor (T-1)/2 \rfloor}
	\left[
	\left( \frac{\mFtrue - \conjugate{\mFtrue}}{2} \right)_{i,\ell}
	\left(\ve_\ell - \ve_{T-\ell+2}\right)_k 
-
\left( \frac{\mFtrue - \conjugate{\mFtrue}}{2} \right)_{i,T-\ell+2}
	\left(\ve_\ell - \ve_{T-\ell+2}\right)_k \right] \\
&= \frac{\imag}{2} \sum_{\ell=2}^{\lfloor (T-1)/2 \rfloor}
	\left( \Ftrue_{i,\ell} - \Ftrue_{i,T-\ell+2} \right)
	\left(\ve_\ell - \ve_{T-\ell+2}\right)_k  ,
\end{aligned} \end{equation*}
where the last equality follows from Equation~\eqref{eq:conjugateSymmetry}.
Noting that the only contributing terms in the sum occur when $k = \ell$ or $k=T-\ell+2$, only one of which is possible, since $\ell \le \lfloor (T-1)/2 \rfloor$, and using Equation~\eqref{eq:conjugateSymmetry} again,
\begin{equation*} 
\left[ \frac{\mFtrue - \conjugate{\mFtrue}}{2} \mK^{1/2} \right]_{i,k}
= \frac{\imag}{2}\left( \Ftrue_{i,k} - \conjugate{\Ftrue_{i,k}} \right)
= -\Im\left( \Ftrue_{i,k} \right) .
\end{equation*}


A parallel argument yields that
\begin{equation*}
\left[ \frac{\mFtrue + \conjugate{\mFtrue}}{2} \mK^{1/2} \right]_{i,k}
= \frac{1}{2}\left( \Ftrue_{i,k} + \conjugate{\Ftrue_{i,k}} \right) ,
\end{equation*}
the real part of $\Ftrue_{i,k}$.
Combining the above two displays yields Equation~\eqref{eq:FsqrtK:entry}.
\end{proof}

\section{Additional Numerical Experiments} \label{apx:experiments}

Here we collect additional numerical experiments in support of our theoretical findings.

\subsection{Effect of Noise Tail Decay} \label{subsec:apx:noise}

Our experiments in the main text focus on the setting where the entries of the noise matrix $\mN$ are generated from a Gaussian.
While our theoretical results require that each row $\mN_i \in \R^T$ be a subgaussian random variable, we suspect that this can be relaxed substantially.
In what follows, we explore the effect of replacing this Gaussian noise with Laplacian noise.
We generate the rows of $\mZtrue$ by drawing i.i.d.~from a standard normal and renormalizing so that $\| \mZtrue \|_F = n$, so that the average row norm of $\mZtrue$ is $1$, but note that the row norms of $\mZtrue$ are not necessarily of the same order.
We then generate $\mZobsd = \mZtrue + \mN$, with the entries of $\mN \in \R^{n \times T}$ drawn i.i.d.~from either a Gaussian or a Laplacian distribution with variance $\nu$.
Note that unlike the experiments in Section~\ref{subsec:expt:variance}, we do not change the variance of the entries of $\mN$ from row to row to fix $\SNR$.
We conducted this experiment with $n=100$ and $T=100,500,1000$ for varying choices of variance $\nu$, with $50$ independent Monte Carlo trials for each combination of conditions.
Figure~\ref{fig:apx:gaussian:tti-by-vary} summarizes the results of this experiment, showing estimation error in $(\tti)$-norm as a function of $\nu$ for the ASE as well as the PCA-based estimate and na\"{i}ve baseline described in Section~\ref{sec:expts}.
Note that the positive direction of the x-axis corresponds to {\em decreasing} values of the SNR-like quantity $\SNR$.
Examining the figure, we note that the performance of the ASE is largely insensitive to whether the observation noise is Gaussian or Laplacian, supporting our conjecture that our results can be extended to subexponential or subgamma noise.
It is interesting to note, as an aside, that the poor performance of the PCA-based method at smaller variances only appears in the case of Gaussian noise.
We suspect that this is due to the fact that our PCA-based method renormalizes the rows of $\mZtrue$ {\em after} performing rank truncation.
In the situation where one or more rows of $\mZobsd$ are much smaller than the others and the noise is small, rank truncation may be sending one or more of these rows to be close to zero, with the result that they are especially bad estimates of the corresponding rows of $\mZtruetilde$ once the rows are renormalized.
The heavier tails of the Laplacian noise prevent this from happening.

\begin{figure}
    \centering
    \includegraphics[width=\textwidth]{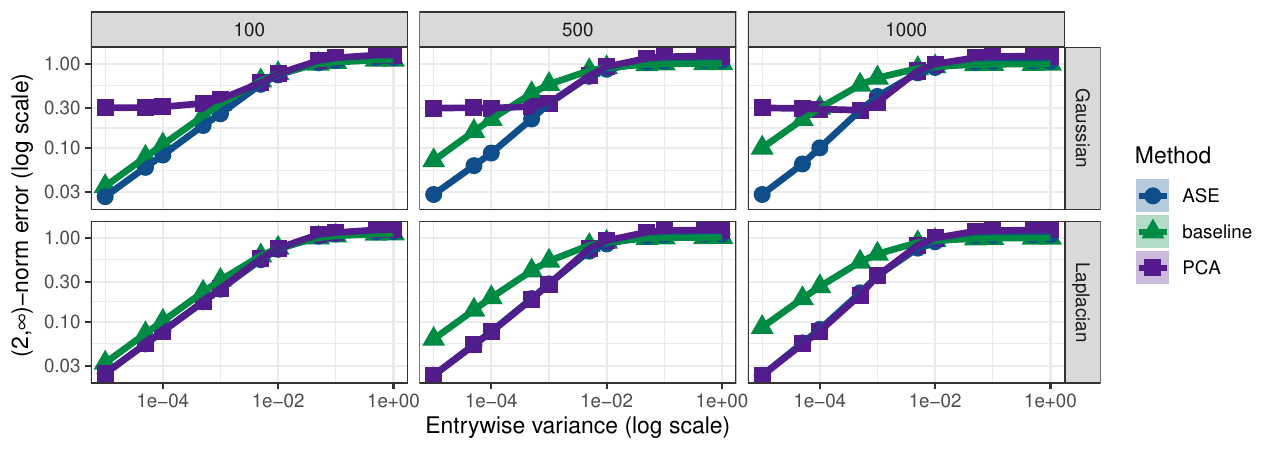}
    \caption{$(\tti)$-norm estimation error as a function of entrywise noise variance under Gaussian (top) and Laplacian (bottom) noise for time series of length $T=100$ (left) $T=500$ (middle) and $T=1000$ (right) and Gaussian (top) and Laplacian (bottom) noise.
Performance of three estimators is shown: the ASE (blue circles), PCA (purple squares) and a na\"ive baseline (green triangles).
Each data point indicates the mean of $50$ Monte Carlo estimates.
Error bars (obscured by the lines) indicate two standard errors of the mean.}
    \label{fig:apx:gaussian:tti-by-vary}
\end{figure}

\subsection{Effect of embedding dimension}

As mentioned in Section~\ref{sec:results}, selecting the embedding dimension is a fundamental task in network embeddings and other dimensionality reduction techniques.
To understand how the choice of embedding dimension $d$ influences estimation accuracy in correlation networks, we generate data as in the experiment in Section~\ref{subsec:apx:noise}, taking the true number of signal frequencies to be $d_0=15$, so that the true embedding dimension is $30$.
We generate $\mZtrue \in \R^{n \times T}$ as in Section~\ref{sec:expts}, and generate the entries of the noise $\mN \in \R^{n \times T}$ i.i.d.~according to a mean zero normal with variance $\nu$.
We then use this to construct the noisy correlation matrix $\mRobsd$, and embed $\mRobsd$ using the ASE with embedding dimension ranging from $1$ to $60$ and recorded the $(\tti)$-norm error in recovering the matrix of true standardized time series $\mZtruetilde \in \R^{n \times T}$.
The results of this experiment, with $n=200, \nu=0.001$ and three choices of time series length $T$, are summarized in Figure~\ref{fig:dimsel}.
Examining the figure, we see that the ASE achieves is best performance when the embedding dimension is chosen correctly, but the ASE outperforms the na\"ive baseline for a wide range of embedding dimensions.
The ASE is better than the baseline once the embedding dimension is at least 25, with this threshold decreasing for larger values of time series length $T$.
These results are broadly in line with the folklore in the network analysis literature (and model selection literature more broadly) that it is better to embed into too many dimensions than too few.
See \cite{TaiLev2026} and citations therein for further discussion of this point.

\begin{figure} 
    \centering
    \includegraphics[width=\textwidth]{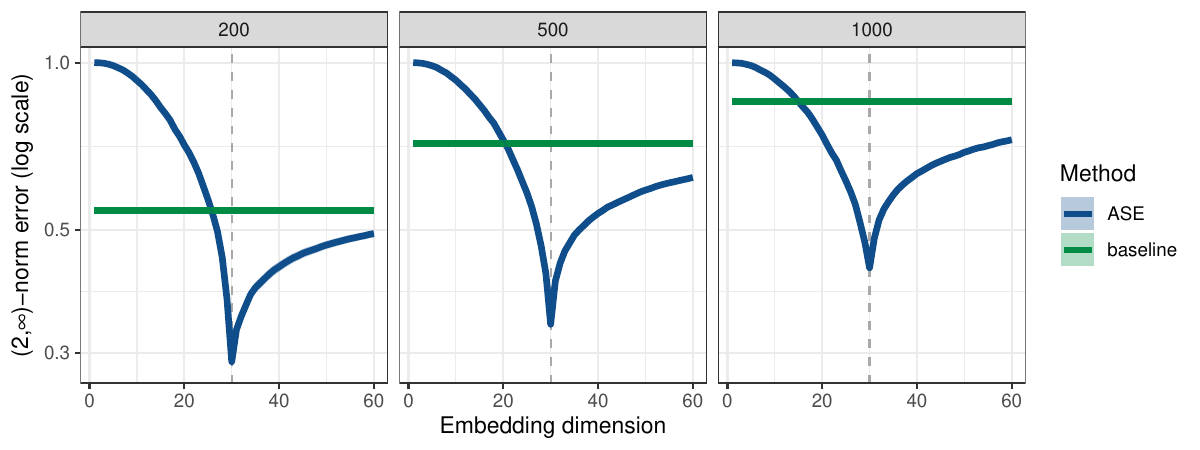}
    \caption{Estimation error in $(\tti)$-norm as a function of embedding dimension $d$ for time series length $T=200,500,1000$ with $d_0 = 15$.
The blue line indicates the performance of the ASE.
The green line indicates the performance of our baseline method, in which $\mZobsdtilde$ is used to estimate $\mZtruetilde$ directly (and thus does not depend on embedding dimension). The vertical grey dashed line indicates the true embedding dimension $30$.}
    \label{fig:dimsel}
\end{figure}

\section{Estimating Signal Variances} \label{apx:sigma}

Here we collect results relating the signal powers $\sigmatrue{i}^2$ as defined in Equation~\eqref{eq:def:sigmatrue} to their empirical estimates $\sigmaobsd{i}^2$, as defined in Equation~\eqref{eq:def:sigmaobsd}.
These technical results will be used in the technical appendices to follow, culminating in the proof of Theorem~\ref{thm:XobsdXtrue:tti} given in Appendix~\ref{apx:thm:tti}.

\begin{lemma} \label{lem:Ni:subgauss}
Suppose that Assumption~\ref{assum:N} holds.
Then with probability at least $1-O(n^{-2})$,
\begin{equation*}
\max_{i \in [n]} \frac{ \| \mN_i \|^2 }{ \nu_i } \le C T \log n.
\end{equation*}
\end{lemma}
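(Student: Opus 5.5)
We bound the squared norm of each row by controlling its projections onto a fixed orthonormal basis, and then take a union bound over the $n$ rows. Fix $i \in [n]$ and let $\ve_1,\dots,\ve_T$ be the standard basis of $\R^T$. Then
\begin{equation*}
\| \mN_i \|^2 = \sum_{t=1}^T \left( \ve_t^\top \mN_i \right)^2 .
\end{equation*}
By Assumption~\ref{assum:N}, each coordinate $\ve_t^\top \mN_i$ is a mean-zero $\nu_i$-subgaussian random variable. The standard subgaussian tail bound therefore gives, for every $s > 0$,
\begin{equation*}
\Pr\left[ |\ve_t^\top \mN_i| > s \right] \le 2\exp\{ -s^2/(2\nu_i) \} .
\end{equation*}
Here we use the convention in which $\nu_i$ plays the role of a variance proxy, matching the ratio $\sigmatrue{i}^2/\nu_i$ in Equation~\eqref{eq:def:SNR}. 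If the convention is instead that $\nu_i$ is a scale, only the constant changes.

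Set $s^2 = C_0 \nu_i \log n$ with $C_0$ large, say $C_0 = 10$. Then each coordinate exceeds $s$ with probability at most $2n^{-C_0/2}$. Take a union bound over all $t \in [T]$ and $i \in [n]$, which is at most $nT$ events. Since $n = O(T)$ by Equation~\eqref{eq:assum:nT}, the bound needed on $T$ in terms of $n$ is the step that requires care. The relation $n = O(T)$ only bounds $n$ above and gives no upper bound on $T$ as a function of $n$, so $\log(nT)$ is not automatically $O(\log n)$. I therefore plan to avoid the coordinatewise union bound and work with the whole vector.

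The plan is to use the vector-level concentration for subgaussian vectors. For a $\nu_i$-subgaussian random vector in $\R^T$, the squared norm satisfies the Hanson--Wright-type bound (\citealp[see][Chapter~3]{Vershynin2020}; equivalently, a $1/2$-net argument on the unit sphere)
\begin{equation*}
\Pr\left[ \| \mN_i \|^2 > C \nu_i (T + u) \right] \le e^{-u}.
\end{equation*}
The net argument runs as follows. Take a $1/2$-net $\mathcal{N}$ of $S^{T-1}$ with $|\mathcal{N}| \le 5^T$. Then $\|\mN_i\| \le 2\max_{\vu \in \mathcal{N}} \vu^\top \mN_i$, and a union bound over the net, using the subgaussian tails of each $\vu^\top \mN_i$, gives $\|\mN_i\|^2 \le C\nu_i (T + u)$ with probability at least $1-e^{-u}$.

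Choose $u = 3\log n$. A union bound over $i \in [n]$ then gives failure probability at most $n \cdot n^{-3} = n^{-2}$. On the complementary event, for every $i \in [n]$,
\begin{equation*}
\frac{\| \mN_i \|^2}{\nu_i} \le C(T + 3\log n) \le C' T \log n ,
\end{equation*}
since $T \ge 1$ and $\log n \ge \log 2$ once $n \ge 2$. This is the claimed bound, with room to spare.

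The main obstacle is checking that the paper's definition of a $\nu_i$-subgaussian random vector, namely that every one-dimensional marginal $\vu^\top \mN_i$ is $\nu_i$-subgaussian, is exactly what the net argument needs. It is, because the net argument only ever uses tail bounds on one-dimensional marginals, uniformly over unit vectors $\vu$. Dependence among the coordinates of $\mN_i$ is therefore harmless. Independence across rows is not needed either, since the final step is a plain union bound over $i$.
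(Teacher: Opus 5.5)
Your proposal is correct and follows the paper's route: a net argument giving a bound of the form $\Pr[\|\mN_i\| \ge \tau] \le C_0 \exp\{T - C_1\tau^2/\nu_i\}$ for each row, followed by a union bound over $i \in [n]$; your choice $u = 3\log n$ in fact yields the slightly sharper $C(T + \log n)$, and you were right to avoid the coordinatewise union bound, which only gives $\log(nT) = O(\log T)$. One quibble: the label ``Hanson--Wright-type'' normally presupposes independent coordinates, so the $1/2$-net argument you write out, which uses only the one-dimensional marginals $\vu^\top \mN_i$ and therefore tolerates dependence within a row, is the justification that actually applies here.
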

\begin{proof}
By assumption, for each $i \in [n]$, $\mN_i \in \R^T$ is a $\nu_i$-subgaussian random vector.
By a standard $\varepsilon$-net argument \citep[see, e.g.,][Chapter 4]{Vershynin2020}, for any $\tau > 0$,
\begin{equation*}
\Pr\left[ \left\| \mN_i \right\| \ge \tau \right]
\le C_0 \exp\left\{ T - \frac{ C_1 \tau^2 }{ \nu_i } \right\} ,
\end{equation*}
for suitably-chosen positive constants $C_0$ and $C_1$.
Taking $\tau = \sqrt{C \nu_i T \log n}$ for $C>0$ suitably large and rearranging,
\begin{equation*}
\Pr\left[ \frac{ \left\| \mN_i \right\|^2 }{ \nu_i } \ge C T \log n \right]
\le C n^{-3} .
\end{equation*}
A union bound over all $i \in [n]$ then yields that with probability at least $1-O(n^{-2})$,
\begin{equation*}
\max_{i \in [n]} \frac{ \left\| \mN_i \right\|^2 }{ \nu_i }
\le C T \log n,
\end{equation*}
as we set out to show.
\end{proof}

\begin{lemma} \label{lem:sigma2:close}
Suppose that Assumption~\ref{assum:N} holds.
Then with high probability, it holds uniformly over all $i \in [n]$ that
\begin{equation*}
\left| \sigmaobsd{i}^2 - \sigmatrue{i}^2 \right|
\le C \nu_i \log n + \sqrt{ \frac{ \sigmatrue{i}^2 \nu_i \log n }{ T } }
\end{equation*} 
\end{lemma}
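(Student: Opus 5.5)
We expand $\sigmaobsd{i}^2$ directly. Writing $\vZobsd{i} = \vZtrue{i} + \vN{i}$ and noting that $\sigmaobsd{i}^2 = \vZobsd{i}^\top \mM \vZobsd{i}$ (the normalization of $\mM$ in Equation~\eqref{eq:def:M} should be read as the centering projection $\mI - \mJ/T$ scaled by $1/T$; we only use that $\mM$ is, up to the factor $1/T$, an orthogonal projection $\mathbf{P}$ annihilating constants), we obtain
\begin{equation*}
\sigmaobsd{i}^2 - \sigmatrue{i}^2
= \frac{2}{T} \vZtrue{i}^\top \mathbf{P} \vN{i} + \frac{1}{T} \vN{i}^\top \mathbf{P} \vN{i} .
\end{equation*}
So it suffices to bound a linear (cross) term and a quadratic term separately, each uniformly in $i \in [n]$ via a union bound.

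\textbf{Cross term.} Let $\vu_i = \mathbf{P}\vZtrue{i}/\|\mathbf{P}\vZtrue{i}\|$, a fixed unit vector (deterministic, since $\mZtrue$ is nonrandom or conditioned upon). By Equation~\eqref{eq:def:sigmatrue}, $\|\mathbf{P}\vZtrue{i}\|^2 = T \sigmatrue{i}^2$. By Assumption~\ref{assum:N}, $\vu_i^\top \vN{i}$ is $\nu_i$-subgaussian, so with probability at least $1 - O(n^{-3})$ we have $|\vu_i^\top \vN{i}| \le C\sqrt{\nu_i \log n}$. Hence
\begin{equation*}
\frac{2}{T}\left| \vZtrue{i}^\top \mathbf{P} \vN{i} \right|
\le \frac{2}{T} \sqrt{T\sigmatrue{i}^2}\, C\sqrt{\nu_i \log n}
= C \sqrt{ \frac{ \sigmatrue{i}^2 \nu_i \log n }{ T } } ,
\end{equation*}
which is exactly the second term in the claimed bound. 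A union bound over $i \in [n]$ completes this step.

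\textbf{Quadratic term.} Since $\mathbf{P}$ is a projection, $0 \le \vN{i}^\top \mathbf{P}\vN{i} \le \|\vN{i}\|^2$. Lemma~\ref{lem:Ni:subgauss} then gives, uniformly over $i$ and with probability $1-O(n^{-2})$, $\|\vN{i}\|^2 \le C\nu_i T\log n$. Therefore
\begin{equation*}
\frac{1}{T}\vN{i}^\top\mathbf{P}\vN{i} \le C \nu_i \log n .
\end{equation*}
Combining the two terms with the triangle inequality yields the lemma on an event of probability $1-O(n^{-2})$.

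\textbf{Main obstacle.} The only delicate point is bookkeeping of the normalization convention for $\mM$ and the $1/T$ factors: the quadratic term must come out as $\nu_i \log n$ rather than $\nu_i T\log n$. Also, the quadratic term is crudely bounded by $\|\vN{i}\|^2$. A Hanson--Wright argument would give a sharper centered bound, but it is unnecessary here, since the lemma as stated tolerates the $C\nu_i\log n$ term.
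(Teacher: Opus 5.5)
Your proposal is correct and follows essentially the same route as the paper. It uses the same split of $\sigmaobsd{i}^2-\sigmatrue{i}^2$ into a linear cross term and a quadratic noise term. It controls the cross term by subgaussian concentration along the fixed direction $\mathbf{P}\vZtrue{i}$, using $\|\mathbf{P}\vZtrue{i}\|^2=T\sigmatrue{i}^2$ and a union bound. It bounds the quadratic term crudely by $\vN{i}^\top\mathbf{P}\vN{i}\le\|\vN{i}\|^2$ together with Lemma~\ref{lem:Ni:subgauss}. Like the paper, you end up with a constant $C$ in front of the square-root term as well, which is the form in which the lemma is used later.
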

\begin{proof}
For $i \in [n]$, recalling $\sigmatrue{i}^2$ and $\sigmaobsd{i}^2$ from Equations~\eqref{eq:def:sigmatrue} and~\eqref{eq:def:sigmaobsd}, respectively,
\begin{equation*} 
\sigmaobsd{i}^2 - \sigmatrue{i}^2
= \frac{1}{T} \sum_{t=1}^T N_{i,t}^2
	+ \frac{2}{T} \sum_{t=1}^T N_{i,t} \Ztrue_{i,t}
	- \left( \frac{1}{T} \sum_{t=1}^T N_{i,t} \right)^2
	- \frac{ 2 \mu_i }{ T } \sum_{t=1}^T N_{i,t}.
\end{equation*}
Rearranging and recalling the centering matrix $\mM$ from Equation~\eqref{eq:def:M},
\begin{equation} \label{eq:sigma2conc:expand}
\sigmaobsd{i}^2 - \sigmatrue{i}^2
= \frac{1}{T} \| \mM \mN_i \|^2 
	+ \frac{2}{T} \mN_i^\top \mM \vZtrue{i} .
\end{equation}

By assumption, each $\mN_i$ is a $\nu_i$-subgaussian random vector.
Noting that $\| \mM \vZtrue{i} \|^2 = T \sigmatrue{i}^2$,
standard concentration inequalities \citep{Vershynin2020} yield that with high probability,
\begin{equation*} 
\left| \frac{2}{T} \mN_i^\top \mM \vZtrue{i} \right|
\le \frac{C}{\sqrt{T}} \sqrt{ \nu_i \sigmatrue{i}^2 \log n } 
~\text{ for all }~i \in [n].
\end{equation*}

Taking absolute values in Equation~\eqref{eq:sigma2conc:expand} and applying the above bound, 
\begin{equation} \label{eq:sigmabound:inter}
\left| \sigmaobsd{i}^2 - \sigmatrue{i}^2 \right|
\le 
\frac{1}{T} \| \mM \mN_i \|^2 
+ \frac{C}{\sqrt{T}} \sqrt{ \nu_i \sigmatrue{i}^2 \log n }
~\text{ for all }~i \in [n] .
\end{equation}

Applying submultiplicativity followed by Lemma~\ref{lem:Ni:subgauss},
\begin{equation*}
\max_{i \in [n]} \left\| \mM \mN_i \right\|^2 
\le \max_{i \in [n]} \left\| \mN_i \right\|^2
\le C \max_{i \in [n]} \nu_i T \log n .
\end{equation*}
Applying this bound to Equation~\eqref{eq:sigmabound:inter}, it holds with high probability that
\begin{equation*}
\left| \sigmaobsd{i}^2 - \sigmatrue{i}^2 \right|
\le
C\left( \nu_i \log n + \sqrt{ \frac{ \sigmatrue{i}^2 \nu_i \log n }{ T } } 
	\right) ~\text{ for all }~i \in [n],
\end{equation*}
as we set out to show.
\end{proof}

\begin{lemma} \label{lem:sigmaratio}
Suppose that Assumptions~\ref{assum:N} and~\ref{assum:Rspec} hold.
Then with high probability, it holds uniformly over all $i \in [n]$ that
\begin{equation} \label{eq:sigmaratio:main}
\max \left\{
\left| \frac{ \sigmaobsd{i} }{ \sigmatrue{i} } - 1 \right|,
\left| \frac{ \sigmaobsd{i} }{ \sigmatrue{i} } - 1 \right|
\right\}
\le C\left( \frac{ \log n }{ \SNR }
	+ \sqrt{ \frac{ \log n }{ T \SNR } } \right).
\end{equation}
Further, with high probability, 
\begin{equation} \label{eq:sigmaratio:kicker}
\max_{i \in [n]}
\max\left\{ \left| \frac{ \sigmatrue{i} }{ \sigmaobsd{i} } \right|,
	    \left| \frac{ \sigmaobsd{i} }{ \sigmatrue{i} } \right| \right\}
\le C .
\end{equation}
\end{lemma}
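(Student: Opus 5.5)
The plan is to derive both bounds from Lemma~\ref{lem:sigma2:close} by dividing through by $\sigmatrue{i}^2$, and then to pass from ratios of variances to ratios of standard deviations with an elementary square-root inequality. I read the two entries of the maximum in \eqref{eq:sigmaratio:main} as the two ratios $\sigmaobsd{i}/\sigmatrue{i}$ and $\sigmatrue{i}/\sigmaobsd{i}$, each compared to $1$.

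First I divide the bound of Lemma~\ref{lem:sigma2:close} by $\sigmatrue{i}^2$. Since $\nu_i/\sigmatrue{i}^2 \le 1/\SNR$ by the definition \eqref{eq:def:SNR}, this gives, with high probability and uniformly in $i\in[n]$,
\begin{equation*}
\left| \frac{\sigmaobsd{i}^2}{\sigmatrue{i}^2} - 1 \right|
\le C\left( \frac{\log n}{\SNR} + \sqrt{\frac{\log n}{T\SNR}} \right) =: \epsilon .
\end{equation*}
The $\sqrt{\cdot}$ term comes from $\sqrt{\sigmatrue{i}^2\nu_i\log n/T}/\sigmatrue{i}^2 = \sqrt{(\nu_i/\sigmatrue{i}^2)\log n/T}$.

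Next I check that $\epsilon = o(1)$. Assumption~\ref{assum:Rspec} gives $\kappa\log T/\SNR = o(1)$. Since $\kappa\ge 1$ and $n = O(T)$, we have $\log n \le \log T + O(1)$, so $\log n/\SNR = o(1)$ (if $\SNR$ grows, which the assumption forces). The second term satisfies $\sqrt{\log n/(T\SNR)} \le \sqrt{\log n/\SNR} = o(1)$. So for all large $n$ we may assume $\epsilon \le 1/2$.

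I then convert to standard deviations. For $x = \sigmaobsd{i}/\sigmatrue{i} \ge 0$ we have $|x-1| = |x^2-1|/(x+1) \le |x^2-1| \le \epsilon$, which gives the first ratio. For the reciprocal, $x^2 \ge 1-\epsilon \ge 1/2$, so $x \ge 1/\sqrt2$ and $|1/x - 1| = |x-1|/x \le \sqrt2\,\epsilon$. Adjusting $C$ yields \eqref{eq:sigmaratio:main}.

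Finally, \eqref{eq:sigmaratio:kicker} follows on the same event: $x \le 1+\epsilon \le 3/2$ and $1/x \le \sqrt 2$, uniformly over $i$.

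The only delicate step is confirming that Assumption~\ref{assum:Rspec} makes $\epsilon$ eventually small, since the lemma needs $\log n/\SNR \to 0$ and the assumption is stated with $\log T$ and $\kappa$. The inequalities $\kappa \ge 1$ and $\log n \lesssim \log T$ handle this. The union over $i$ is already built into Lemma~\ref{lem:sigma2:close}, so no further probability argument is needed.
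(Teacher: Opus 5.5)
Your proposal is correct and follows essentially the same route as the paper: divide Lemma~\ref{lem:sigma2:close} by $\sigmatrue{i}^2$, use $x-1=(x^2-1)/(x+1)$ with $x=\sigmaobsd{i}/\sigmatrue{i}$, and handle the reciprocal via $1/x-1=(1-x)/x$ once the growth condition bounds $x$ away from zero. Two small improvements over the paper: your observation that $x+1\ge 1$ makes its preliminary lower bound on $\sigmaobsd{i}$ unnecessary for the first ratio, and your explicit check that $\kappa\ge 1$ and $n=O(T)$ give $\log n/\SNR=o(1)$ from Assumption~\ref{assum:Rspec} fills in a step the paper leaves implicit.
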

\begin{proof}
By Lemma~\ref{lem:sigma2:close}, it holds uniformly over all $i \in [n]$ that
\begin{equation*}
\sigmaobsd{i}^2 \ge \sigmatrue{i}^2 - C\left( \nu_i \log n
        + \sqrt{ \frac{ \sigmatrue{i}^2 \nu_i \log n }{ T } } \right)
= \sigmatrue{i}^2
\left( 1 - \frac{ C \nu_i \log n }{ \sigmatrue{i}^2 }
	- C\sqrt{  \frac{ \nu_i }{ \sigmatrue{i}^2 }  \frac{ \log n }{T} }
	\right) .
\end{equation*}
Applying our growth assumption in Equation~\eqref{eq:assum:noisepower}, it follows that uniformly over $i \in [n]$,
\begin{equation} \label{eq:sigmaobsd:LB}
\sigmaobsd{i}^2 \ge \sigmatrue{i}^2 \left( 1 - \op{1} \right) .
\end{equation}

For all $i \in [n]$, multiplying through by appropriate quantities, we have
\begin{equation} \label{eq:sigmaratio:closeto1}
\frac{ \sigmaobsd{i} }{ \sigmatrue{i} } - 1
= \frac{ \sigmaobsd{i} - \sigmatrue{i} }{ \sigmatrue{i} }
= \frac{ \sigmaobsd{i}^2 - \sigmatrue{i}^2 }
        { \sigmatrue{i} \left( \sigmatrue{i} + \sigmaobsd{i} \right) } .
\end{equation}
Again using Lemma~\ref{lem:sigma2:close},
\begin{equation*} 
\left| \frac{ \sigmaobsd{i} }{ \sigmatrue{i} } - 1 \right|
\le
\frac{ C }{ \sigmatrue{i} \left( \sigmatrue{i} + \sigmaobsd{i} \right) }
\left(  \nu_i \log n
        + \sqrt{ \frac{ \sigmatrue{i}^2 \nu_i \log n }{ T } } \right).
\end{equation*}
Applying Equation~\eqref{eq:sigmaobsd:LB} yields that
\begin{equation*}
\left| \frac{ \sigmaobsd{i} }{ \sigmatrue{i} } - 1 \right|
\le C\left( \frac{ \nu_i \log n }{ \sigmatrue{i}^2 }
	+ \sqrt{ \frac{ \nu_i \log n }{ T \sigmatrue{i}^2 } } \right).
\end{equation*}
To see the same bound for $\sigmatrue{i}/\sigmaobsd{i}$, note that
\begin{equation*}
\frac{ \sigmatrue{i} }{ \sigmaobsd{i} } - 1
= \frac{1}{ \sigmaobsd{i}/\sigmatrue{i} } -1
= \left( 1 - \frac{ \sigmaobsd{i} }{ \sigmatrue{i} } \right)
	\frac{1}{\sigmaobsd{i}/\sigmatrue{i} }.
\end{equation*}
Applying Equation~\eqref{eq:sigmaratio:closeto1} and using our growth assumption in Equation~\eqref{eq:assum:noisepower} yields Equation~\eqref{eq:sigmaratio:main}.
Equation~\eqref{eq:sigmaratio:kicker} then follows from the growth assumption in Equation~\eqref{eq:assum:noisepower}.
\end{proof}

\begin{lemma} \label{lem:sigmaprod:ratio}
Suppose that Assumptions~\ref{assum:N} and~\ref{assum:Rspec} hold.
Then it holds with high probability that, uniformly over all $i,j \in [n]$,
\begin{equation*}
\left| \frac{\sigmatrue{i} \sigmatrue{j}}{\sigmaobsd{i} \sigmaobsd{j}} - 1 \right|
= \Op{ \frac{ \log n }{ \SNR } + \sqrt{ \frac{ \log n }{ T \SNR } } } .
\end{equation*}
\end{lemma}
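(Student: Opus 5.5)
The plan is to reduce the statement to the single-index ratio bounds of Lemma~\ref{lem:sigmaratio}, using an elementary product identity. For each $i \in [n]$, write $a_i = \sigmatrue{i}/\sigmaobsd{i}$ and $\delta_i = a_i - 1$. Then
\begin{equation*}
\frac{\sigmatrue{i} \sigmatrue{j}}{\sigmaobsd{i} \sigmaobsd{j}} - 1
= a_i a_j - 1
= \delta_i + \delta_j + \delta_i \delta_j ,
\end{equation*}
so by the triangle inequality
\begin{equation*}
\left| \frac{\sigmatrue{i} \sigmatrue{j}}{\sigmaobsd{i} \sigmaobsd{j}} - 1 \right|
\le |\delta_i| + |\delta_j| + |\delta_i| |\delta_j| .
\end{equation*}

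Next, invoke Lemma~\ref{lem:sigmaratio}, whose proof establishes the bound in Equation~\eqref{eq:sigmaratio:main} for both $\sigmaobsd{i}/\sigmatrue{i}$ and $\sigmatrue{i}/\sigmaobsd{i}$. This gives, on a single high-probability event and uniformly over $i \in [n]$,
\begin{equation*}
\max_{i \in [n]} |\delta_i| \le \epsilon_n := C\left( \frac{\log n}{\SNR} + \sqrt{\frac{\log n}{T \SNR}} \right).
\end{equation*}
On this same event, the right-hand side of the triangle-inequality bound is at most $2\epsilon_n + \epsilon_n^2$, for every pair $(i,j)$ simultaneously. No further union bound over pairs is needed, because the event is already uniform in $i$.

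Finally, the quadratic term must be absorbed. Under Assumption~\ref{assum:Rspec}, Equation~\eqref{eq:assum:noisepower} together with $\kappa \ge 1$ gives $\log T/\SNR = o(1)$. Since $n = O(T)$, we also have $\log n \le \log T + O(1)$, so $\log n/\SNR = o(1)$; one should check that $\SNR \to \infty$ makes the additive constant harmless. The second term of $\epsilon_n$ is smaller still. Hence $\epsilon_n = o(1)$, so $\epsilon_n^2 \le \epsilon_n$ eventually, and the bound is $O(\epsilon_n)$ uniformly in $i,j$, which is the claimed $O_P$ statement.

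The only step that needs care is the bound on $\sigmatrue{i}/\sigmaobsd{i} - 1$, as opposed to $\sigmaobsd{i}/\sigmatrue{i} - 1$. If one prefers not to rely on the proof of Lemma~\ref{lem:sigmaratio}, this can be obtained directly from the identity
\begin{equation*}
\frac{\sigmatrue{i}}{\sigmaobsd{i}} - 1 = \left(1 - \frac{\sigmaobsd{i}}{\sigmatrue{i}}\right) \frac{\sigmatrue{i}}{\sigmaobsd{i}} ,
\end{equation*}
combined with the uniform boundedness of $\sigmatrue{i}/\sigmaobsd{i}$ from Equation~\eqref{eq:sigmaratio:kicker}.
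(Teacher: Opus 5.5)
Your proof is correct and follows essentially the same route as the paper, whose decomposition $\delta_i\,\sigmatrue{j}/\sigmaobsd{j} + \delta_j$ is your $\delta_i+\delta_j+\delta_i\delta_j$ regrouped, with $\sigmatrue{j}/\sigmaobsd{j}$ bounded via Equation~\eqref{eq:sigmaratio:kicker}. The one difference is that the paper re-derives the per-index bound on $\delta_i$ by rationalizing to $\sigmatrue{i}^2-\sigmaobsd{i}^2$ and applying Lemma~\ref{lem:sigma2:close} with the SNR growth condition, whereas you cite Lemma~\ref{lem:sigmaratio} directly (correctly working around the typo in its displayed statement), which makes your version shorter.
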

\begin{proof}
We begin by noting that
\begin{equation*}
\frac {\sigmatrue{i} \sigmatrue{j}} {\sigmaobsd{i} \sigmaobsd{j}} - 1
=
\frac{ \sigmatrue{i} \sigmatrue{j} - \sigmaobsd{i} \sigmaobsd{j} }
	{ \sigmaobsd{i} \sigmaobsd{j} }
=
\frac{ (\sigmatrue{i} - \sigmaobsd{i})\sigmatrue{j} 
	+ \sigmaobsd{i} (\sigmatrue{j} - \sigmaobsd{j}) }
	{ \sigmaobsd{i} \sigmaobsd{j} }.
\end{equation*}
Multiplying through by appropriate quantities,
\begin{equation*} 
\frac {\sigmatrue{i} \sigmatrue{j}} {\sigmaobsd{i} \sigmaobsd{j}} - 1
=
\frac{ (\sigmatrue{i}^2 - \sigmaobsd{i}^2) \sigmatrue{j} }
	{ \sigmaobsd{i} \sigmaobsd{j} (\sigmatrue{i} + \sigmaobsd{i}) }
+
\frac{ (\sigmaobsd{j}^2 - \sigmatrue{j}^2) }
	{ \sigmaobsd{j} (\sigmatrue{j} + \sigmaobsd{j}) }.
\end{equation*}
Applying the triangle inequality followed by Equation~\eqref{eq:sigmaratio:kicker} from Lemma~\ref{lem:sigmaratio},
it holds uniformly over all $i,j \in [n]$ that
\begin{equation} \label{eq:sigprod:step1}
\left|
\frac {\sigmatrue{i} \sigmatrue{j}} {\sigmaobsd{i} \sigmaobsd{j}} - 1
\right|
\le
\frac{ C\left| \sigmatrue{i}^2 - \sigmaobsd{i}^2 \right| }
	{ \sigmaobsd{i} (\sigmatrue{i} + \sigmaobsd{i}) }
+
\frac{ \left| \sigmaobsd{j}^2 - \sigmatrue{j}^2 \right| }
	{ \sigmaobsd{j} (\sigmatrue{j} + \sigmaobsd{j}) }.
\end{equation}

For ease of notation, for each $i \in [n]$ define
\begin{equation} \label{eq:def:Bi}
B_i = \nu_i \log n +  \sqrt{ \frac{ \sigmatrue{i}^2 \nu_i \log n }{ T } }.
\end{equation}
By Lemma~\ref{lem:sigma2:close}, uniformly over all $i \in [n]$,
\begin{equation*}
\left| \sigmaobsd{i}^2 - \sigmatrue{i}^2 \right| \le C B_i .
\end{equation*}
Thus, applying this bound to Equation~\eqref{eq:sigprod:step1}, it holds uniformly over all $i,j \in [n]$ that
\begin{equation} \label{eq:sigprod:step2}
\left| \frac{\sigmaobsd{i} \sigmaobsd{j}}{\sigmatrue{i} \sigmatrue{j}} - 1 \right|
\le
\frac{ C B_i }{ \sqrt{\sigmatrue{i}^2 - CB_i}
		\left(\sigmatrue{i} + \sqrt{\sigmatrue{i}^2 - CB_i}\right) }
+
\frac{ C B_j }{ \sqrt{\sigmatrue{j}^2-CB_j} 
		\left(\sigmatrue{j} + \sqrt{\sigmatrue{j}^2-CB_j}\right) }.
\end{equation}
By our assumption in Equation~\eqref{eq:assum:noisepower},
\begin{equation*}
\max_{i \in [n]} \frac{ B_i }{ \sigmatrue{i}^2 } = \op{ 1 }.
\end{equation*}
Applying this fact to Equation~\eqref{eq:sigprod:step2}, it holds with high probability that, 
\begin{equation*}
\left| \frac{\sigmaobsd{i} \sigmaobsd{j}}{\sigmatrue{i} \sigmatrue{j}} - 1 \right|
\le
\frac{ C B_i }{ \sigmatrue{i}^2 }
+
\frac{ C B_j }{ \sigmatrue{j}^2 }
~\text{ for all }~i,j \in [n] .
\end{equation*}
Taking a maximum over all $i,j \in [n]$ and recalling the definition of $B_i$ from Equation~\eqref{eq:def:Bi},
\begin{equation*}
\left| \frac{\sigmaobsd{i} \sigmaobsd{j}}{\sigmatrue{i} \sigmatrue{j}} - 1 \right|
\le
C\left[ \max_{i \in [n]} \frac{ \nu_i \log n }{ \sigmatrue{i}^2 }
	+ \max_{i \in [n]} \sqrt{ \frac{ \nu_i \log n }{ T \sigmatrue{i}^2 } }
	\right].
\end{equation*}
Recalling the definition of $\SNR$ from Equation~\eqref{eq:def:SNR} completes the proof.
\end{proof}

\section{Controlling Matrix Norms} \label{apx:norms}

Here we collect technical results related to controlling the variation of $\mRobsd$ about $\mRtrue$.
These are used in Appendix~\ref{apx:subspaces} below, as well as in our proof of Theorem~\ref{thm:XobsdXtrue:tti} given in Appendix~\ref{apx:thm:tti}.

\begin{lemma} \label{lem:Rtrue:facts}
With $\mRtrue$ as in Equation~\eqref{eq:def:Rtrue}, $\sqrt{n} \le \|\mRtrue\|_F \le n$.
Further, the leading eigenvalue of $\mRtrue$ obeys $1 \le \lambdatrue{1} \le n$.
\end{lemma}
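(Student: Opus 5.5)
The plan is to work directly from the factorization $\mRtrue = \mZtruetilde \mZtruetildetop$ in Equation~\eqref{eq:R:embeddingfactorization}. The key observation is that every row of $\mZtruetilde$ has unit Euclidean norm. Indeed, the $i$-th row is $T^{-1/2}\sigmatrue{i}^{-1}$ times the centered series $\mM\vZtrue{i}$ (reading $\mM$ as the centering projection, as in the proof of Lemma~\ref{lem:sigma2:close}). Its squared norm is therefore $\|\mM\vZtrue{i}\|^2/(T\sigmatrue{i}^2) = 1$, using $\|\mM\vZtrue{i}\|^2 = T\sigmatrue{i}^2$ from Equation~\eqref{eq:def:sigmatrue}. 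Consequently the diagonal entries satisfy $\Rtrue_{i,i}=1$. By Cauchy--Schwarz, every entry satisfies $|\Rtrue_{i,j}| = |\langle \vZtruetilde{i},\vZtruetilde{j}\rangle| \le 1$. This is just the statement that correlations lie in $[-1,1]$ and that self-correlations equal one.

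The Frobenius bounds then follow immediately. For the lower bound, $\|\mRtrue\|_F^2 \ge \sum_i (\Rtrue_{i,i})^2 = n$. For the upper bound, $\|\mRtrue\|_F^2 = \sum_{i,j}(\Rtrue_{i,j})^2 \le n^2$. Taking square roots gives $\sqrt n \le \|\mRtrue\|_F \le n$.

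For the leading eigenvalue, use that $\mRtrue$ is positive semidefinite (it is a Gram matrix), so $\lambdatrue{1} = \|\mRtrue\|$. For the lower bound, take the Rayleigh quotient at the standard basis vector $\ve_1$: $\lambdatrue{1} \ge \ve_1^\top \mRtrue \ve_1 = \Rtrue_{1,1} = 1$. For the upper bound, bound the eigenvalue by the trace, since all eigenvalues are nonnegative: $\lambdatrue{1} \le \operatorname{tr}(\mRtrue) = \sum_i \Rtrue_{i,i} = n$. Alternatively, $\lambdatrue{1} \le \|\mRtrue\|_F \le n$ also works.

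There is essentially no obstacle here. The only point needing care is confirming that $\mM$ is the idempotent centering projection, so that the rows of $\mZtruetilde$ are exactly unit norm. This implicitly requires $\sigmatrue{i} > 0$ for every $i$, which is needed for $\mRtrue$ to be defined in the first place.
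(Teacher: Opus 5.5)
Your proof is correct and matches the paper's argument: the unit diagonal gives $\|\mRtrue\|_F \ge \sqrt{n}$ and $\lambdatrue{1} \ge \ve_i^\top \mRtrue \ve_i = 1$, while $|\Rtrue_{i,j}| \le 1$ gives $\|\mRtrue\|_F \le n$. The paper closes with $\lambdatrue{1} \le \|\mRtrue\|_F \le n$ (your stated alternative) rather than your trace bound, which uses positive semidefiniteness; both work, and reading $\mM$ as the centering projection $\mI - \mJ/T$ is the right way to make the unit-row-norm claim precise.
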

\begin{proof}
The lower-bound on $\|\mRtrue\|_F$ follows from the fact that all diagonal entries of $\mRtrue$ are $1$ by construction.
The upper bound follows from the fact that all entries of $\mRtrue$ are bounded between $-1$ and $1$.
The lower-bound on $\lambdatrue{1}$ follows from the fact that for any standard basis vector $\ve_i$, $i \in [n]$, we have $\ve_i^\top \mRtrue \ve_i = 1$ (again because $\mRtrue$ has all diagonal entries equal to one).
The upper bound follows from the fact that 
$\lambdatrue{1} \le \| \mRtrue \|_F \le n$.
\end{proof}

\begin{lemma} \label{lem:N:max}
Suppose that Assumptions~\ref{assum:N} and~\ref{assum:Rspec} hold.
Then with high probability, it holds uniformly over $i \in [n]$ that
\begin{equation} \label{eq:bound:Nrow}
\| \mN_i \|^2 \le C \nu_i T \log T .
\end{equation}
Further, 
\begin{equation} \label{eq:SigmatrueN:Lagnap}
\left\| \mSigmatrue^{-1/2} \mN \right\|_{\tti}
= \Op{ \sqrt{ \frac{ T \log T }{ \SNR } } } .
\end{equation}
\end{lemma}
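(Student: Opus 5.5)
The first claim, Equation~\eqref{eq:bound:Nrow}, is essentially a restatement of Lemma~\ref{lem:Ni:subgauss} with $\log n$ replaced by $\log T$. I would redo its argument with the threshold tuned to $T$. For each $i \in [n]$, the $\varepsilon$-net bound used there gives
\begin{equation*}
\Pr\left[ \| \mN_i \|^2 \ge \tau^2 \right] \le C_0 \exp\left\{ T - C_1 \tau^2/\nu_i \right\}.
\end{equation*}
Taking $\tau^2 = C \nu_i T \log T$ with $C$ large makes the exponent at most $-3\log T$ once $T \ge 3$, since then $T - C C_1 T\log T \le -3T\log T$. Each failure probability is therefore $O(T^{-3})$. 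A union bound over $i \in [n]$ costs a factor of $n = O(T)$ by Assumption~\ref{assum:N}, so all rows satisfy the bound with probability $1 - O(T^{-2})$.

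For Equation~\eqref{eq:SigmatrueN:Lagnap}, note that the $i$-th row of $\mSigmatrue^{-1/2}\mN$ is $\mN_i/\sigmatrue{i}$. Hence
\begin{equation*}
\left\| \mSigmatrue^{-1/2} \mN \right\|_{\tti}^2 = \max_{i \in [n]} \frac{\| \mN_i \|^2}{\sigmatrue{i}^2} \le \max_{i\in[n]} \frac{C \nu_i T \log T}{\sigmatrue{i}^2}.
\end{equation*}
The inequality holds on the high-probability event from the first part. By the definition of $\SNR$ in Equation~\eqref{eq:def:SNR}, $\nu_i/\sigmatrue{i}^2 \le 1/\SNR$ for every $i \in [n]$. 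The right-hand side is therefore at most $C T\log T/\SNR$, and taking square roots gives the $O_P$ statement.

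Assumption~\ref{assum:Rspec} does not enter the first claim. It is listed only to keep the lemma's hypotheses uniform with its neighbours; the second part needs nothing beyond the definition of $\SNR$ (and $\sigmatrue{i} > 0$, implicit in $\mRtrue$ being well defined). The only point needing care is the union bound in the first part: the tail exponent must beat both the $e^{T}$ net-cardinality factor and the factor $n$. Both are handled by choosing the threshold proportional to $T\log T$ and using $n = O(T)$.
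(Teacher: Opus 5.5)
Your proof is correct. For the second claim it is identical to the paper's, and, as in the paper, Assumption~\ref{assum:Rspec} plays no role.

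For the first claim you take a different route. The paper does not use the $\varepsilon$-net bound of Lemma~\ref{lem:Ni:subgauss}. Instead it bounds each coordinate, $|\ve_t^\top \mN_i| \le \sqrt{C \nu_i \log T}$, and union-bounds over the $T$ coordinates and, using $n = O(T)$, over the $n$ rows. It then sums the $T$ squared coordinates to get $\|\mN_i\|^2 \le C T \nu_i \log T$. That argument is more elementary and only needs subgaussianity of the coordinate marginals $\ve_t^\top \mN_i$, but the $\log T$ factor is built into it.

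Your net-based argument uses the full directional subgaussianity granted by Assumption~\ref{assum:N} and gives more than is needed. The per-row failure probability is of order $e^{-cT\log T}$ rather than polynomial. The same computation with $\tau^2 = C\nu_i(T + \log T)$ would already give $\|\mN_i\|^2 \le C\nu_i T$ uniformly in $i$ with probability $1 - O(T^{-2})$, so the $\log T$ in Equation~\eqref{eq:bound:Nrow} is slack.

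Retuning the threshold to $T$, rather than citing Lemma~\ref{lem:Ni:subgauss} and absorbing $\log n \lesssim \log T$, is also a sensible choice. That lemma's $1 - O(n^{-2})$ guarantee is only meaningful when $n$ grows, whereas your bound holds with probability $1 - O(T^{-2})$.
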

\begin{proof}
Note that for a fixed basis vector $\ve_t \in \R^T$, since $\mN_i$ is a subgaussian random vector by assumption, it holds with high probability that
\begin{equation*}
\left| \ve_t^\top \mN_i \right|
\le \sqrt{ C \nu_i \log T}.
\end{equation*}
Choosing the constant $C>0$ large enough, we can ensure that this bound holds
uniformly over all $t \in [T]$, so that
\begin{equation*}
\| \mN_i \|^2
= \sum_{t=1}^T \left( \ve_t^\top \mN_i \right)^2
\le C T \nu_i \log T .
\end{equation*}
Again choosing $C>0$ suitably large and recalling our assumption in Equation~\eqref{eq:assum:nT}, a union bound ensures that this holds
for all $i \in [n]$ with high probability, establishing Equation~\eqref{eq:bound:Nrow}.
Noting that
\begin{equation*}
\left\| \mSigmatrue^{-1/2} \mN \right\|_{\tti}
= \max_{i \in [n]} \frac{ \| \mN_i \| }{ \sigmatrue{i} },
\end{equation*}
Equation~\eqref{eq:SigmatrueN:Lagnap} follows immediately, once we recall the definition of $\SNR$ from Equation~\eqref{eq:def:SNR}.
\end{proof}

\begin{lemma} \label{lem:SigmatrueN:spectral}
Suppose that Assumption~\ref{assum:N} holds.
\begin{equation*}
\left\| \mSigmatrue^{-1/2} \mN \right\|
= \Op{ \sqrt{T \frac{\log n }{\SNR} } } .
\end{equation*}
\end{lemma}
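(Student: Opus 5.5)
The plan is to reduce the claim to a spectral-norm bound for a matrix with independent subgaussian rows, using the diagonal structure of $\mSigmatrue^{-1/2}$. Write $\mY = \mSigmatrue^{-1/2} \mN \in \R^{n \times T}$. Its $i$-th row is $\mN_i/\sigmatrue{i}$. By Assumption~\ref{assum:N} the rows are independent and mean zero. Each row $\mY_i$ is a $(\nu_i/\sigmatrue{i}^2)$-subgaussian random vector, and by the definition of $\SNR$ in Equation~\eqref{eq:def:SNR} we have $\nu_i/\sigmatrue{i}^2 \le 1/\SNR$ for every $i$. So every row of $\mY$ is $(1/\SNR)$-subgaussian, uniformly in $i$. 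The claim then follows by bounding $\|\mY\| = \sup_{\vu \in S^{T-1}, \vv \in S^{n-1}} \vv^\top \mY \vu$ with an $\varepsilon$-net argument.

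\textbf{Step 1: fixed pair of unit vectors.} Fix $\vu \in S^{T-1}$ and $\vv \in S^{n-1}$. Then $\vv^\top \mY \vu = \sum_{i=1}^n v_i\, \mY_i^\top \vu$. This is a sum of independent mean-zero subgaussian variables, the $i$-th having parameter at most $v_i^2/\SNR$. Hence the sum is $(1/\SNR)$-subgaussian, since $\sum_i v_i^2 = 1$. Note that this step needs only independence across rows. The dependence within a row is absorbed by the vector-subgaussian hypothesis, which controls $\mY_i^\top \vu$ for every fixed direction $\vu$.

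\textbf{Step 2: union bound over nets.} Take $1/4$-nets $\mathcal{N}_T$ of $S^{T-1}$ and $\mathcal{N}_n$ of $S^{n-1}$, with cardinalities at most $9^T$ and $9^n$. The standard net lemma \citep[][Chapter 4]{Vershynin2020} gives $\|\mY\| \le 2\max_{\vu \in \mathcal{N}_T,\, \vv \in \mathcal{N}_n} \vv^\top \mY \vu$. A union bound then gives
\begin{equation*}
\Pr\left[ \|\mY\| \ge \tau \right] \le 9^{n+T} \exp\left\{ - \frac{C_1 \tau^2 \SNR}{4} \right\}.
\end{equation*}

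\textbf{Step 3: choose $\tau$.} Set $\tau = \sqrt{C (n+T) \log n / \SNR}$ with $C$ large. The right-hand side is then $O(n^{-2})$. The $\log n$ factor is more than the net argument needs, since $\tau \asymp \sqrt{(n+T)/\SNR}$ already suffices. It is kept only to make the failure probability polynomially small in $n$, matching the form of the statement. By Equation~\eqref{eq:assum:nT}, $n + T = O(T)$, so $\|\mY\| = \Op{\sqrt{T \log n/\SNR}}$.

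\textbf{Main obstacle.} Step 3 uses $n = O(T)$, which is part of Assumption~\ref{assum:N}, so the hypotheses of the lemma are sufficient. The only real care point is Step 1: we must check that rescaling rows by $\sigmatrue{i}^{-1}$ turns the heterogeneous parameters $\nu_i$ into the single uniform parameter $1/\SNR$, and that within-row dependence does no harm. Both points follow directly from the definitions. An alternative route bounds $\|\mY\|^2 = \|\mY^\top \mY\|$ through a matrix Bernstein inequality, but it would produce extra logarithmic factors, so the net argument is preferable.
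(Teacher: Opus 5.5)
Your proof is correct and takes the same route as the paper. Like the paper, you fix unit vectors, treat $\vv^\top \mSigmatrue^{-1/2}\mN\vu$ as a sum of independent subgaussians with variance proxy at most $1/\SNR$, union-bound over $\varepsilon$-nets of both spheres to get $\sqrt{(n+T)\log n/\SNR}$, and then use $n = O(T)$; your version is more explicit about net sizes and the factor $2$. One small correction: without the $\log n$ the failure probability is already $e^{-c(n+T)}$, far smaller than any polynomial in $n$, so the $\log n$ is kept only to match the stated form, not to make the failure probability polynomially small.
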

\begin{proof}
Let $\vu \in \R^n$ and $\vv \in \R^T$ be fixed unit-norm vectors.
\begin{equation*}
\vu^\top \mSigmatrue^{-1/2} \mN \vv
= \sum_{i=1}^n \frac{ u_i \mN_i^\top \vv }{ \sigmatrue{i} } .
\end{equation*}
Since the rows of $\mN$ are independent subgaussian vectors by Assumption~\ref{assum:N}, standard concentration inequalities \citep{Vershynin2020} yield that with high probability,
\begin{equation*}
\left| \sum_{i=1}^n \frac{ u_i \mN_i^\top \vv }{ \sigmatrue{i} } \right|
\le C \sqrt{ \sum_{i=1}^n u_i^2 \frac{\nu_i}{\sigmatrue{i}^2} v_i^2 \log n}
\le C \sqrt{ \frac{\log n }{\SNR} } .
\end{equation*}
Modifying the concentration inequality to encompass a union bound over $\varepsilon$-nets covering the unit spheres in $\R^n$ and $\R^T$ \citep{Vershynin2020}, it holds with high probability that
\begin{equation*}
\left\| \mSigmatrue^{-1/2} \mN \right\|
= \Op{ \sqrt{ \frac{ (n+T) \log n}{\SNR} } } .
\end{equation*}
Using our assumption that $n = O(T)$ completes the proof.
\end{proof}

\begin{lemma} \label{lem:SigmaobsdN:subgauss}
Suppose that Assumption~\ref{assum:N} holds.
Then
\begin{equation*}
\max\left\{ \left\| \mSigmaobsd^{-1/2} \mN \right\|,
                \left\| \mSigmatrue^{-1/2} \mN \right\| \right\}
= \Op{ \sqrt{ \frac{ T \log n}{\SNR} } } 
\end{equation*}
\end{lemma}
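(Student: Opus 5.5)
The bound on $\| \mSigmatrue^{-1/2} \mN \|$ is exactly Lemma~\ref{lem:SigmatrueN:spectral}, so only $\| \mSigmaobsd^{-1/2} \mN \|$ needs work. I would reduce it to the $\mSigmatrue$ case by factoring out the diagonal ratio matrix:
\begin{equation*}
\mSigmaobsd^{-1/2} \mN = \left( \mSigmaobsd^{-1/2} \mSigmatrue^{1/2} \right) \mSigmatrue^{-1/2} \mN .
\end{equation*}
Submultiplicativity of the spectral norm then gives
\begin{equation*}
\left\| \mSigmaobsd^{-1/2} \mN \right\|
\le \max_{i \in [n]} \frac{ \sigmatrue{i} }{ \sigmaobsd{i} } \,
\left\| \mSigmatrue^{-1/2} \mN \right\| .
\end{equation*}
This works because $\mSigmaobsd^{-1/2}\mSigmatrue^{1/2}$ is diagonal with entries $\sigmatrue{i}/\sigmaobsd{i}$.

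The first factor is bounded by a constant with high probability by Equation~\eqref{eq:sigmaratio:kicker} of Lemma~\ref{lem:sigmaratio}. The second factor is $\Op{\sqrt{T\log n/\SNR}}$ by Lemma~\ref{lem:SigmatrueN:spectral}. I would intersect the two high-probability events with a union bound, and the claim follows for both terms in the maximum.

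The main obstacle is a hypothesis mismatch. Lemma~\ref{lem:sigmaratio} uses Assumption~\ref{assum:Rspec}, specifically the growth condition~\eqref{eq:assum:noisepower}, to guarantee $\sigmaobsd{i}^2 \ge \sigmatrue{i}^2(1-\op{1})$. The present statement lists only Assumption~\ref{assum:N}. I would therefore either invoke Assumption~\ref{assum:Rspec} as the surrounding results do, or note that only $\max_i \nu_i \log n/\sigmatrue{i}^2 = o(1)$ is needed, which is implied by $\log T/\SNR = o(1)$ together with $n = O(T)$.

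If one insists on Assumption~\ref{assum:N} alone, I would replace the ratio bound with a lower bound on $\sigmaobsd{i}^2$ directly. By Equation~\eqref{eq:sigma2conc:expand}, $\sigmaobsd{i}^2 = \sigmatrue{i}^2 + T^{-1}\|\mM\mN_i\|^2 + 2T^{-1}\mN_i^\top\mM\vZtrue{i}$. Completing the square gives $\sigmaobsd{i}^2 \ge (\sigmatrue{i} - |\text{cross term}|^{1/2}\ldots)$, so it is bounded below by a constant multiple of $\sigmatrue{i}^2$ unless the noise dominates. That case again needs some SNR condition, so I expect to state the reliance on Assumption~\ref{assum:Rspec} explicitly.
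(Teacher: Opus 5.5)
Your proposal is correct and matches the paper's proof: write $\mSigmaobsd^{-1/2}\mN = (\mSigmaobsd^{-1/2}\mSigmatrue^{1/2})\,\mSigmatrue^{-1/2}\mN$, use submultiplicativity, bound the diagonal factor by a constant via Equation~\eqref{eq:sigmaratio:kicker} of Lemma~\ref{lem:sigmaratio}, and bound the second factor by Lemma~\ref{lem:SigmatrueN:spectral}. Your hypothesis remark is also right: the paper's proof uses Lemma~\ref{lem:sigmaratio} in the same way, so it implicitly relies on the growth condition~\eqref{eq:assum:noisepower} from Assumption~\ref{assum:Rspec}, even though the statement lists only Assumption~\ref{assum:N}.
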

\begin{proof}
Multiplying through by appropriate quantities and using submultiplicativity,
\begin{equation*}
\left\| \mSigmaobsd^{-1/2} \mN \right\|
\le \left\| \mSigmaobsd^{-1/2} \mSigmatrue^{1/2} \right\|
	\left\|  \mSigmatrue^{-1/2} \mN \right\| .
\end{equation*}
Applying Lemma~\ref{lem:sigmaratio},
\begin{equation*}
\left\| \mSigmaobsd^{-1/2} \mN \right\|
\le C \left\| \mSigmatrue^{-1/2} \mN \right\|,
\end{equation*}
and Lemma~\ref{lem:SigmatrueN:spectral} completes the proof.
\end{proof}

\begin{lemma} \label{lem:RobsdRtrue:frob}
Suppose that Assumption~\ref{assum:N} holds.
Then
\begin{equation*}
\left\| \mRobsd - \mRtrue \right\|_F = \op{ \| \mRtrue \|_F }.
\end{equation*}
\end{lemma}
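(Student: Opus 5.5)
We decompose $\mRobsd - \mRtrue$ into a normalization error and a noise error, and bound each in Frobenius norm relative to $\|\mRtrue\|_F \ge \sqrt{n}$ (Lemma~\ref{lem:Rtrue:facts}). Writing $\mD = \mSigmaobsd^{-1/2}\mSigmatrue^{1/2}$ (a diagonal matrix) and expanding $\mZobsd = \mZtrue + \mN$ in Equation~\eqref{eq:def:Robsd}, we obtain
\begin{equation*}
\mRobsd = \mD \left( \mRtrue + \mE_1 + \mE_1^\top + \mE_2 \right) \mD,
\end{equation*}
where
\begin{equation*}
\mE_1 = \frac{1}{T}\mSigmatrue^{-1/2}\mZtrue\mM\mN^\top\mSigmatrue^{-1/2} = \frac{1}{\sqrt{T}}\mZtruetilde \mN^\top \mSigmatrue^{-1/2}
\end{equation*}
and
\begin{equation*}
\mE_2 = \frac{1}{T}\mSigmatrue^{-1/2}\mN\mM\mN^\top\mSigmatrue^{-1/2}.
\end{equation*}
We then write $\mRobsd - \mRtrue = (\mD\mRtrue\mD - \mRtrue) + \mD(\mE_1+\mE_1^\top+\mE_2)\mD$ and treat the two pieces separately.

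For the first piece, entrywise we have $[\mD\mRtrue\mD - \mRtrue]_{ij} = \Rtrue_{ij}\bigl(\sigmatrue{i}\sigmatrue{j}/(\sigmaobsd{i}\sigmaobsd{j}) - 1\bigr)$. Lemma~\ref{lem:sigmaprod:ratio} bounds the ratio term uniformly by $O_P(\log n/\SNR + \sqrt{\log n/(T\SNR)})$, which is $o_P(1)$ under Assumption~\ref{assum:Rspec}. Hence this piece has Frobenius norm $o_P(\|\mRtrue\|_F)$. This step needs Assumption~\ref{assum:Rspec}, which in any case is required by the lemmas on $\sigmaobsd{i}$ that we invoke; the statement as written lists only Assumption~\ref{assum:N}, so we would add it.

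For the noise piece, Equation~\eqref{eq:sigmaratio:kicker} gives $\|\mD\| \le C$, so it suffices to bound $\|\mE_1\|_F$ and $\|\mE_2\|_F$. For $\mE_1$, we use $\|\mZtruetilde\| = \sqrt{\lambdatrue{1}}$ together with Lemma~\ref{lem:SigmatrueN:spectral}:
\begin{equation*}
\|\mE_1\|_F \le \frac{1}{\sqrt T}\|\mZtruetilde\|_F\,\|\mSigmatrue^{-1/2}\mN\| \le \frac{\sqrt n}{\sqrt T}\,O_P\!\left(\sqrt{\frac{T\log n}{\SNR}}\right) = O_P\!\left(\sqrt{\frac{n\log n}{\SNR}}\right),
\end{equation*}
since $\|\mZtruetilde\|_F=\sqrt n$ because the rows have unit norm. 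This is $o_P(\sqrt n)$ by Equation~\eqref{eq:assum:noisepower}. For $\mE_2$, we bound $\|\mE_2\|_F \le \sqrt{n}\,\|\mE_2\| \le \sqrt n\, T^{-1}\|\mSigmatrue^{-1/2}\mN\|^2 = \sqrt n\, O_P(\log n/\SNR)$, using that $\mE_2$ has rank at most $n$ and Lemma~\ref{lem:SigmatrueN:spectral} again (with $n=O(T)$); this is again $o_P(\sqrt n)$.

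Combining the pieces gives $\|\mRobsd - \mRtrue\|_F = o_P(\sqrt n) = o_P(\|\mRtrue\|_F)$. The main obstacle is the normalization piece: the uniform control of $\sigmaobsd{i}/\sigmatrue{i}$ really needs $\log n/\SNR \to 0$, so the proof relies on the already-established Lemmas~\ref{lem:sigmaratio} and~\ref{lem:sigmaprod:ratio}, and we must be careful that the rank-based Frobenius bound for $\mE_2$ does not lose more than a $\sqrt n$ factor.
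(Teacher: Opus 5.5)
Your proof is correct, and for the noise part it takes a different route from the paper. The normalization piece $\mD\mRtrue\mD - \mRtrue$ is handled exactly as in the paper, via the uniform bound of Lemma~\ref{lem:sigmaprod:ratio}. You are also right that Assumption~\ref{assum:Rspec} is needed: the paper's proof invokes Equation~\eqref{eq:assum:noisepower} as well, even though the statement lists only Assumption~\ref{assum:N}.

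For the noise, the paper works entrywise. It bounds $|\mN_i^\top\mM\vZtrue{j}|$, $|\vZtrue{i}^\top\mM\mN_j|$ and $|\mN_i^\top\mM\mN_j|$ uniformly over all pairs $i \neq j$. It does this with subgaussian tail bounds, a union bound over pairs, and the row-norm control of Lemma~\ref{lem:N:max}. Each off-diagonal noise entry is then $O(\sqrt{\log n/(T\SNR)} + \sqrt{n}\log T/(T\SNR))$. Summing the $n^2$ squared entries and using $n = O(T)$ turns $n^2/T$ into $O(n)$.

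You instead factor the noise as $\mD(\mE_1 + \mE_1^\top + \mE_2)\mD$ and use matrix inequalities. For $\mE_1$ you combine $\|\mA\mB\|_F \le \|\mA\|_F\|\mB\|$, $\|\mZtruetilde\|_F = \sqrt n$, and the operator-norm bound of Lemma~\ref{lem:SigmatrueN:spectral}. For $\mE_2$ you use $\|\mE_2\|_F \le \sqrt n\,\|\mE_2\|$. This gives the same orders, $\sqrt{n\log n/\SNR}$ and $\sqrt n\log n/\SNR$, with $n = O(T)$ entering only through Lemma~\ref{lem:SigmatrueN:spectral}.

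What your route buys:
\begin{itemize}
\item It is shorter and needs no union bound over pairs.
\item Using the triangle inequality at the matrix level avoids the cross terms that the paper silently drops when it squares its two-term entrywise decomposition.
\end{itemize}
What the paper's entrywise route buys is locality. The same per-entry bounds give Lemma~\ref{lem:RobsdRtrue:tti} simply by summing along one row, whereas your global Frobenius/operator-norm argument does not localize to rows without further work.

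One small slip: you say you use $\|\mZtruetilde\| = \sqrt{\lambdatrue{1}}$, but the bound you actually apply uses $\|\mZtruetilde\|_F = \sqrt n$.
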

\begin{proof}
For $i,j \in [n]$, we have
\begin{equation*} \begin{aligned}
\left( \mRobsd - \mRtrue \right)_{i,j}
&= \frac{ \mZobsd_i^\top \mM \mZobsd_j }{ T \sigmaobsd{i} \sigmaobsd{j} }
- \frac{ \vZtrue{i}^\top \mM \vZtrue{j} }{ T \sigmatrue{i} \sigmatrue{j} } \\
&= 
\frac{1}{T \sigmaobsd{i} \sigmaobsd{j} } 
	\left( \mZobsd_i^\top \mM \mZobsd_j 
				- \vZtrue{i}^\top \mM \vZtrue{j} \right) 
+ \frac{ \vZtrue{i}^\top \mM \vZtrue{j} }{ T \sigmatrue{i} \sigmatrue{j} }
	\left( \frac{ \sigmatrue{i} \sigmatrue{j} }{ \sigmaobsd{i} \sigmaobsd{j} } 
			- 1 \right) .
\end{aligned} \end{equation*}
Squaring and summing over all $i,j\in [n]$, noting that $\mRobsd$ and $\mRtrue$
have all diagonal entries equal to $1$ by construction,
\begin{equation*} \begin{aligned}
\left\| \mRobsd - \mRtrue \right\|_F^2
&= \frac{1}{T^2} \sum_{i=1}^n \sum_{j : j \neq i}
\left( \frac{\sigmatrue{i} \sigmatrue{j}}
		{\sigmaobsd{i} \sigmaobsd{j}} \right)^2
	\left( \frac{ \mZobsd_i^\top \mM \mZobsd_j 
					- \vZtrue{i}^\top \mM \vZtrue{j} }
			{ \sigmatrue{i} \sigmatrue{j} } \right)^2
+
\sum_{i=1}^n \sum_{j:j\neq i}
	\left( \frac{ \sigmatrue{i} \sigmatrue{j} }{ \sigmaobsd{i} \sigmaobsd{j} } 
			- 1 \right)^2 {\Rtrue_{i,j}}^2 .
\end{aligned} \end{equation*}
Applying Lemma~\ref{lem:sigmaprod:ratio},
\begin{equation} \label{eq:RobsdRtrue:frob:checkpt} \begin{aligned}
\left\| \mRobsd - \mRtrue \right\|_F^2
&\le
\frac{C}{T^2} \sum_{i=1}^n \sum_{j: j\neq i}
	\left( \frac{ \mZobsd_i^\top \mM \mZobsd_j 
					- \vZtrue{i}^\top \mM \vZtrue{j} }
			{ \sigmatrue{i} \sigmatrue{j} } \right)^2 
+
\left(  \frac{ \log n }{ \SNR } + \sqrt{ \frac{ \log n }{ T \SNR } }
    \right)^2 \left\| \mRtrue \right\|_F^2 .
\end{aligned} \end{equation}

Recalling that $\mZobsd_i = \vZtrue{i} + \mN_i$, we have
\begin{equation*}
\left| \frac{ \mZobsd_i^\top \mM \mZobsd_j - \vZtrue{i}^\top \mM \vZtrue{j} }
			{ \sqrt{T} \sigmatrue{i} \sigmatrue{j} } \right|
\le
\frac{ \left| \mN_i^\top \mM \vZtrue{j} \right|
		+ \left| \vZtrue{i}^\top \mM \mN_j \right|
		+ \left| \mN_i^\top \mM \mN_j \right| }
			{ \sqrt{T} \sigmatrue{i} \sigmatrue{j} } ,
\end{equation*}
and standard subgaussian concentration inequalities and Lemma~\ref{lem:N:max} yield that with high probability, uniformly over all $i,j \in [n]$ distinct,
\begin{equation*}
\left| \frac{ \mZobsd_i^\top \mM \mZobsd_j - \vZtrue{i}^\top \mM \vZtrue{j} }
			{ \sqrt{T} \sigmatrue{i} \sigmatrue{j} } \right|
\le
C \sqrt{ \frac{ \log n }{ \SNR } }
+ \frac{ C \sqrt{ n \nu_i \nu_j } \log T }{ \sqrt{T} \sigmatrue{i} \sigmatrue{j} }
\le 
C \sqrt{ \frac{ \log n }{ \SNR } }
+ C \sqrt{ \frac{n}{T} } \frac{ \log T }{ \SNR } .
\end{equation*}
Applying this to Equation~\eqref{eq:RobsdRtrue:frob:checkpt},
\begin{equation*} \begin{aligned}
\left\| \mRobsd - \mRtrue \right\|_F^2
&\le
\frac{C}{T} \sum_{i=1}^n \sum_{j: j\neq i}
	\left( \sqrt{ \frac{ \log n }{ \SNR } }
		+ \sqrt{ \frac{n}{T} } \frac{ \log T }{ \SNR } \right)^2
+
\left(  \frac{ \log n }{ \SNR } + \sqrt{ \frac{ \log T }{ T \SNR } }
    \right)^2 \left\| \mRtrue \right\|_F^2 \\
&\le \frac{ C n^2 }{T} \left( \sqrt{ \frac{ \log n }{ \SNR } }
        + \sqrt{ \frac{n}{T} } \frac{ \log T }{ \SNR } \right)^2
+
\left(  \frac{ \log n }{ \SNR } + \sqrt{ \frac{ \log T }{ T \SNR } }
    \right)^2 \left\| \mRtrue \right\|_F^2 .
\end{aligned} \end{equation*}
Applying the growth assumptions in Equations~\eqref{eq:assum:nT} and~ \eqref{eq:assum:noisepower}, 
\begin{equation*} \begin{aligned}
\left\| \mRobsd - \mRtrue \right\|_F^2
= \op{ n + \| \mRtrue \|_F^2 }.
\end{aligned} \end{equation*}
Considering the diagonal entries of $\mRtrue$ yields the trivial lower bound $\| \mRtrue \|_F^2 \ge n$, which completes the proof.
\end{proof}

\begin{lemma} \label{lem:RobsdRtrue:tti} 
Suppose Assumption~\ref{assum:N} holds.
Then
\begin{equation*}
\left\| \mRobsd - \mRtrue \right\|_{\tti} = \op{ \| \mRtrue \|_{\tti} }.
\end{equation*}
\end{lemma}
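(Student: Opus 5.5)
We follow the proof of Lemma~\ref{lem:RobsdRtrue:frob} closely, replacing the sum over all pairs $(i,j)$ by a maximum over rows $i$ of the sum over $j$. For fixed $i$, we use the same entrywise decomposition:
\begin{equation*}
\left( \mRobsd - \mRtrue \right)_{i,j}
= \frac{\sigmatrue{i}\sigmatrue{j}}{\sigmaobsd{i}\sigmaobsd{j}}
	\cdot \frac{ \mZobsd_i^\top \mM \mZobsd_j - \vZtrue{i}^\top \mM \vZtrue{j} }{ T \sigmatrue{i}\sigmatrue{j} }
+ \Rtrue_{i,j}\left( \frac{\sigmatrue{i}\sigmatrue{j}}{\sigmaobsd{i}\sigmaobsd{j}} - 1 \right).
\end{equation*}
The diagonal entries vanish because both matrices have unit diagonal. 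Summing the squares of these entries over $j$ and taking the square root bounds the $i$-th row norm of $\mRobsd-\mRtrue$ by the sum of two terms.

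\textbf{Second term.} Lemma~\ref{lem:sigmaprod:ratio} bounds the ratio factor uniformly over $i,j$. This term is therefore at most
\begin{equation*}
C\left(\frac{\log n}{\SNR} + \sqrt{\frac{\log n}{T\SNR}}\right)\|\mRtrue_i\| .
\end{equation*}
Under Assumption~\ref{assum:Rspec}, this is $o(1)\,\|\mRtrue\|_{\tti}$ uniformly in $i$.

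\textbf{First term.} The prefactor is $O(1)$ by Lemma~\ref{lem:sigmaratio}. The uniform entrywise bound from the proof of Lemma~\ref{lem:RobsdRtrue:frob} gives, with high probability and uniformly over $i \neq j$,
\begin{equation*}
\left|\frac{ \mZobsd_i^\top \mM \mZobsd_j - \vZtrue{i}^\top \mM \vZtrue{j} }{ T \sigmatrue{i}\sigmatrue{j} }\right|
\le \frac{C}{\sqrt{T}}\left(\sqrt{\frac{\log n}{\SNR}} + \sqrt{\frac nT}\frac{\log T}{\SNR}\right).
\end{equation*}
Summing over the $n-1$ indices $j\ne i$, the row norm of this term is at most
\begin{equation*}
C\sqrt{\frac nT}\left(\sqrt{\frac{\log n}{\SNR}} + \sqrt{\frac nT}\frac{\log T}{\SNR}\right).
\end{equation*}
By $n=O(T)$ and \eqref{eq:assum:noisepower}, this is $o(1)$. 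Every row of $\mRtrue$ has a unit diagonal entry, so $\|\mRtrue\|_{\tti}\ge 1$, and the term is $o(\|\mRtrue\|_{\tti})$.

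Combining the two terms and taking the maximum over $i$ gives the claim. The main point requiring care is uniformity: all bounds must hold simultaneously over all $i,j$ on a single high-probability event. This is handled by the union bounds already built into Lemmas~\ref{lem:sigmaratio}, \ref{lem:sigmaprod:ratio} and~\ref{lem:N:max}, together with the pairwise subgaussian bounds on $\mN_i^\top\mM\vZtrue{j}$ and $\mN_i^\top\mM\mN_j$ over $n^2$ pairs, using $\log(n^2)=O(\log n)$.

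One subtle step is the cross term $\mN_i^\top\mM\mN_j$. The proof of Lemma~\ref{lem:RobsdRtrue:frob} handles it by conditioning on $\mN_j$, which is independent of $\mN_i$, and applying Lemma~\ref{lem:N:max} to control $\|\mN_j\|$. We would reuse that argument verbatim.
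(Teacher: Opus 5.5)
Your proposal is correct and follows the paper's own argument: the paper proves this lemma by rerunning the Frobenius-norm proof of Lemma~\ref{lem:RobsdRtrue:frob} along a single row. As in your write-up, the variance-ratio part is $o(1)\,\|\mRtrue\|_{\tti}$ by Lemma~\ref{lem:sigmaprod:ratio} and Equation~\eqref{eq:assum:noisepower}, and the noise part has row norm $O(\sqrt{n/T})\cdot o(1)$, which is $o(\|\mRtrue\|_{\tti})$ because the unit diagonal gives $\|\mRtrue\|_{\tti}\ge 1$.

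One small correction: conditioning on $\mN_j$ and applying Lemma~\ref{lem:N:max} gives $|\mN_i^\top \mM \mN_j|/(\sqrt{T}\sigmatrue{i}\sigmatrue{j}) \le C\log T/\SNR$, not the $C\sqrt{n/T}\,\log T/\SNR$ you carried over from that proof. Since $n = O(T)$, this does not affect your conclusion.
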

\begin{proof}
The proof follows the same argument as Lemma~\ref{lem:RobsdRtrue:frob}, summing along a row of $\mRobsd - \mRtrue$ rather than summing over all entries.
\end{proof}

\begin{lemma} \label{lem:RobsdRtrue:spectral}
Suppose that Assumption~\ref{assum:N} holds.
Then
\begin{equation*}
\left\| \mRobsd - \mRtrue \right\|
\le
C \left( \frac{ \lambdatrue{1} \log n }{ \SNR } 
        + \sqrt{ \frac{\lambdatrue{1} \log n}{\SNR} } \right) .
\end{equation*}
\end{lemma}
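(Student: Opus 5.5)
We decompose $\mRobsd - \mRtrue$ into a term driven by the noise in the numerator and a term driven by the error in the normalization. Write $\mD = \mSigmaobsd^{-1/2}\mSigmatrue^{1/2}$, a diagonal matrix. Since $\mZobsd = \mZtrue + \mN$ and $\mM$ is idempotent, we have
\begin{equation*}
\mRobsd = \mD \left( \mRtrue + \mE_1 + \mE_1^\top + \mE_2 \right) \mD,
\end{equation*}
where
\begin{equation*}
\mE_1 = \frac{1}{\sqrt{T}} \mZtruetilde \mM \mN^\top \mSigmatrue^{-1/2}
\quad\text{and}\quad
\mE_2 = \frac{1}{T} \mSigmatrue^{-1/2} \mN \mM \mN^\top \mSigmatrue^{-1/2}.
\end{equation*}
Here we used $\mZtruetilde = T^{-1/2}\mSigmatrue^{-1/2}\mZtrue\mM$. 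Consequently,
\begin{equation*}
\mRobsd - \mRtrue = \mD\mRtrue\mD - \mRtrue + \mD\left(\mE_1 + \mE_1^\top + \mE_2\right)\mD,
\end{equation*}
and we bound each piece in spectral norm.

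\textbf{Noise terms.} By Lemma~\ref{lem:sigmaratio}, $\|\mD\| \le C$ with high probability, so it suffices to bound the middle factors.

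For $\mE_1$, use submultiplicativity together with $\|\mZtruetilde\| = \sqrt{\lambdatrue{1}}$ (from Equation~\eqref{eq:R:embeddingfactorization}) and $\|\mM\| \le 1$. Then Lemma~\ref{lem:SigmaobsdN:subgauss} gives
\begin{equation*}
\|\mE_1\| \le T^{-1/2}\sqrt{\lambdatrue{1}} \, \|\mSigmatrue^{-1/2}\mN\| = O_P\!\left(\sqrt{\lambdatrue{1}\log n/\SNR}\right).
\end{equation*}

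For $\mE_2$, the same lemma gives
\begin{equation*}
\|\mE_2\| \le T^{-1}\|\mSigmatrue^{-1/2}\mN\|^2 = O_P\!\left(\log n/\SNR\right).
\end{equation*}
Lemma~\ref{lem:Rtrue:facts} gives $\lambdatrue{1} \ge 1$, so this is at most $C\lambdatrue{1}\log n/\SNR$.

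\textbf{Normalization term.} Write
\begin{equation*}
\mD\mRtrue\mD - \mRtrue = (\mD - \mI)\mRtrue\mD + \mRtrue(\mD - \mI).
\end{equation*}
This yields $\|\mD\mRtrue\mD - \mRtrue\| \le C\lambdatrue{1}\|\mD - \mI\|$. Note that $\mD$ has diagonal entries $\sigmatrue{i}/\sigmaobsd{i}$. By Lemma~\ref{lem:sigmaratio},
\begin{equation*}
\|\mD - \mI\| \le C\left(\log n/\SNR + \sqrt{\log n/(T\SNR)}\right).
\end{equation*}
The first contribution, $C\lambdatrue{1}\log n/\SNR$, is of the desired form.

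\textbf{Main obstacle.} The difficulty is the cross contribution $\lambdatrue{1}\sqrt{\log n/(T\SNR)}$, which must be absorbed into $\sqrt{\lambdatrue{1}\log n/\SNR}$. That absorption requires $\lambdatrue{1} \le CT$, which holds because Lemma~\ref{lem:Rtrue:facts} gives $\lambdatrue{1} \le n$ and Assumption~\ref{assum:N} gives $n = O(T)$. Then
\begin{equation*}
\lambdatrue{1}\sqrt{\frac{\log n}{T\SNR}} = \sqrt{\lambdatrue{1}}\sqrt{\frac{\lambdatrue{1}}{T}}\sqrt{\frac{\log n}{\SNR}} \le C\sqrt{\frac{\lambdatrue{1}\log n}{\SNR}}.
\end{equation*}

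Lemma~\ref{lem:sigmaratio} also uses Assumption~\ref{assum:Rspec}, through Equation~\eqref{eq:assum:noisepower}, to keep the ratios $\sigmatrue{i}/\sigmaobsd{i}$ bounded. We invoke it as stated. Combining the three bounds by the triangle inequality, and working on the intersection of the high-probability events, yields the claim.
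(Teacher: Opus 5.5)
Your proof is correct and is essentially the paper's argument. You use the same split of $\mRobsd - \mRtrue$ into the normalization term $\mD\mRtrue\mD - \mRtrue$, controlled by Lemma~\ref{lem:sigmaratio}, and the three noise cross terms, controlled by Lemma~\ref{lem:SigmaobsdN:subgauss} and $\|\mZtruetilde\| = \sqrt{\lambdatrue{1}}$, followed by the same absorption of $\lambdatrue{1}\sqrt{\log n/(T\SNR)}$ via $\lambdatrue{1}\le n = O(T)$. Your small deviations are cosmetic: you pull $\mD$ out of the noise terms, and you bound $\log n/\SNR \le \lambdatrue{1}\log n/\SNR$ using $\lambdatrue{1}\ge 1$ instead of absorbing it through Equation~\eqref{eq:assum:noisepower}. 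You also correctly flag that Lemma~\ref{lem:sigmaratio} needs Assumption~\ref{assum:Rspec}, which the paper likewise uses without listing it in the statement.
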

\begin{proof}
Expanding the definitions of $\mRobsd$ and $\mRtrue$ from Equations~\eqref{eq:def:Robsd} and~\eqref{eq:def:Rtrue}, respectively, and applying the triangle inequality,
\begin{equation} \label{eq:RobsdRtrue:spectral:bigtri}
\begin{aligned} 
\left\| \mRobsd - \mRtrue \right\| 
&\le \left\| \frac{1}{T} \mSigmaobsd^{-1/2} \left( \mN \mM \mZtrue^\top + \mZtrue \mM \mN^\top 
                                        + \mN \mM \mN^\top \right)
                \mSigmaobsd^{-1/2} \right\| \\
&~~~~~~+ \left\| \frac{ \mSigmaobsd^{-1/2} \mZtrue \mM \mZtrue^\top \mSigmaobsd^{-1/2}
                        - \mSigmatrue^{-1/2} \mZtrue \mM \mZtrue^\top \mSigmatrue^{-1/2} }{ T }
                \right\|.
\end{aligned} \end{equation}

Adding and subtracting appropriate quantities and applying the triangle inequality,
\begin{equation*} \begin{aligned}
&\left\| \frac{ \mSigmaobsd^{-1/2} \mZtrue \mM \mZtrue^\top \mSigmaobsd^{-1/2}
         \!-\! \mSigmatrue^{-1/2} \mZtrue \mM \mZtrue^\top \mSigmatrue^{-1/2} }
        {T} \right\| \\
&~~~~~~\le 
\left\| \frac{1}{T} \left( \mSigmaobsd^{-1/2} - \mSigmatrue^{-1/2} \! 
        \right) \!  \mZtrue \mM \mZtrue^\top \mSigmaobsd^{-1/2} \right\| 
+ \left\| \frac{1}{T} \mSigmaobsd^{-1/2} \mZtrue \mM \mZtrue^\top
                \left( \mSigmaobsd^{-1/2} - \mSigmatrue^{-1/2} \right) \right\|.
\end{aligned} \end{equation*}
Factoring out $\mSigmatrue^{-1/2}$, recalling the definition of $\mRtrue$ from Equation~\eqref{eq:def:Rtrue} and using submultiplicativity of the norm,
\begin{equation*} \begin{aligned}
\left\| \frac{1}{T} \!
        \left( \mSigmaobsd^{-1/2} \mZtrue \mM \mZtrue^\top \mSigmaobsd^{-1/2}
         \!-\! \mSigmatrue^{-1/2} \mZtrue \mM \mZtrue^\top \mSigmatrue^{-1/2}
                \right) \right\|
&\le \left\| \mSigmaobsd^{-1/2} \mSigmatrue^{1/2} - \mI \right\|
        \left\| \mRtrue \right\|
        \left\| \mSigmatrue^{1/2} \mSigmaobsd^{-1/2} \right\| \\
&~~~~~~+ \left\| \mRtrue \right\| 
		\left\| \mSigmaobsd^{-1/2} \mSigmatrue^{1/2} - \mI \right\|.
\end{aligned} \end{equation*}
Applying Lemma~\ref{lem:sigmaratio}, it holds with high probability that
\begin{equation} \label{eq:RobsdRtrue:spectral:tripart2}
\left\| \frac{ \mSigmaobsd^{-1/2} ZMZ^\top \mSigmaobsd^{-1/2}
        \!-\! \mSigmaobsd^{-1/2} ZMZ^\top \mSigmaobsd^{-1/2} } {T} \right\|
\le C \lambdatrue{1} \left( \frac{ \log n }{ \SNR }
                + \sqrt{ \frac{ \log n }{ T \SNR } } \right). 
\end{equation}

Applying the triangle inequality and basic properties of the norm,
\begin{equation} \label{eq:RhatR:spectral:tripart1:split}
\begin{aligned}
\left\| \frac{1}{T}
                \mSigmaobsd^{-1/2} \!
                \left(\! \mN \mM \mZtrue^\top 
                        \!+\! \mZtrue \mM \mN^\top 
                        \!+\! \mN \mM \mN^\top \! \right) \!
                \mSigmaobsd^{-1/2} \right\|
&\le \frac{ 2 }{T } \left\| \mSigmaobsd^{-1/2} \mN \right\|
                        \left\| \mSigmaobsd^{-1/2} \mZtrue \mM \right\| \\
&~~~~~~+ \frac{1}{T} 
        \left\| \mSigmaobsd^{-1/2} \mN \right\|^2~\left\| \mM \right\| .
\end{aligned} \end{equation}
Using the fact that $\mM$ is a projection and applying Lemma~\ref{lem:SigmaobsdN:subgauss},
\begin{equation*}
\frac{1}{T} \left\| \mSigmaobsd^{-1/2} \mN \right\|^2~\left\| \mM \right\|
\le \frac{C \log n}{\SNR} .
\end{equation*}
Again using Lemma~\ref{lem:SigmaobsdN:subgauss},
\begin{equation*}
\frac{ 1 }{T } \left\| \mSigmaobsd^{-1/2} \mN \right\|
                        \left\| \mSigmaobsd^{-1/2} \mZtrue \mM \right\|
\le
\frac{ C \log n }{ \sqrt{T \SNR} } 
	\left\| \mSigmatrue^{-1/2} \mZtrue \mM \right\|
\le C \sqrt{ \frac{ \lambdatrue{1} \log n }{ \SNR } } ,
\end{equation*}
where the second inequality follows from the definition of $\mRtrue$.
Applying the above two displays to Equation~\eqref{eq:RhatR:spectral:tripart1:split},
\begin{equation*}
\left\| \frac{1}{T}
                \mSigmaobsd^{-1/2} \!
                \left(\! \mN \mM \mZtrue^\top \!
                        \!+\! \mZtrue \mM \mN^\top  \!
                        \!+\! \mN \mM \mN^\top \! \right) \!
                \mSigmaobsd^{-1/2} \right\|
\!\le C\sqrt{ \frac{ \log n }{ \SNR } } \!\!
	\left( \! \sqrt{\lambdatrue{1}} + \!\sqrt{ \frac{ \log n }{ \SNR } } \right)
\le \! C \!\sqrt{ \frac{ \lambdatrue{1} \log n }{ \SNR } } ,
\end{equation*}
where the second inequality follows from the assumptions in Equations~\eqref{eq:assum:noisepower} and~\eqref{eq:assum:lambdad:LB}, along with the trivial bound $\lambdatrue{d} \le \lambdatrue{1}$.
Applying this and Equation~\eqref{eq:RobsdRtrue:spectral:tripart2} to Equation~\eqref{eq:RobsdRtrue:spectral:bigtri},
\begin{equation*}
\left\| \mRobsd - \mRtrue \right\|
\le
C \lambdatrue{1} \left( \frac{ \log n }{ \SNR } 
	+ \sqrt{ \frac{ \log n }{ T \SNR } } \right) 
+
C \sqrt{ \frac{ \lambdatrue{1} \log n }{ \SNR } } .
\end{equation*}
Collecting terms, using Lemma~\ref{lem:Rtrue:facts} to bound $1 \le \lambdatrue{1} \le n$, and applying the assumption in Equation~\eqref{eq:assum:nT},
\begin{equation*}
\left\| \mRobsd - \mRtrue \right\|
\le
C \left( \frac{ \lambdatrue{1} \log n }{ \SNR }
        + \sqrt{ \frac{ \lambdatrue{1} \log n}{\SNR} } \right),
\end{equation*}
completing the proof.
\end{proof}

\begin{lemma} \label{lem:tti:mx:highprob}
Suppose that Assumption~\ref{assum:N} holds and let $\mB \in \R^{T \times p}$ be such that the rows of $\mN$ are independent given $\mB$.
Then
\begin{equation*}
\left\| \mSigmatrue^{-1/2} \mN \mB \right\|_{\tti} 
= \Op{ \| \mB \|_F \sqrt{ \frac{ \log (n+p)}{ \SNR } } } .
\end{equation*}
\end{lemma}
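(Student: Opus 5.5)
The plan is to condition on $\mB$ and work row by row, then take a union bound over the $n$ rows and $p$ columns. For fixed $i \in [n]$, the $i$-th row of $\mSigmatrue^{-1/2} \mN \mB$ is $\sigmatrue{i}^{-1} \mB^\top \mN_i \in \R^p$. Its squared norm is
\begin{equation*}
\left\| \sigmatrue{i}^{-1} \mB^\top \mN_i \right\|^2 = \sigmatrue{i}^{-2} \sum_{k=1}^p \left( \mB_{\cdot,k}^\top \mN_i \right)^2 ,
\end{equation*}
where $\mB_{\cdot,k}$ denotes the $k$-th column of $\mB$.

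Conditional on $\mB$, the rows of $\mN$ remain independent by hypothesis. I will take the hypothesis to also mean that each $\mN_i$ remains $\nu_i$-subgaussian conditionally, for instance because $\mB$ is independent of $\mN_i$. Then for each fixed column $k$, the scalar $\mB_{\cdot,k}^\top \mN_i$ is subgaussian with parameter $\nu_i \|\mB_{\cdot,k}\|^2$. Applying the subgaussian tail bound at level $\sqrt{C \nu_i \|\mB_{\cdot,k}\|^2 \log(n+p)}$ gives, with probability at least $1-(n+p)^{-3}$ or so,
\begin{equation*}
\left( \mB_{\cdot,k}^\top \mN_i \right)^2 \le C \nu_i \|\mB_{\cdot,k}\|^2 \log(n+p) .
\end{equation*}
A union bound over all $i\in[n]$ and $k\in[p]$ costs a factor of $np \le (n+p)^2$, so with $C$ large enough the bound holds simultaneously with high probability.

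On that event, summing over $k$ gives $\| \mB^\top \mN_i\|^2 \le C\nu_i \|\mB\|_F^2 \log(n+p)$. Dividing by $\sigmatrue{i}^2$ and using $\nu_i/\sigmatrue{i}^2 \le 1/\SNR$ from the definition of $\SNR$ in Equation~\eqref{eq:def:SNR}, then taking the maximum over $i$ and a square root, yields
\begin{equation*}
\left\| \mSigmatrue^{-1/2} \mN \mB \right\|_{\tti} \le C \|\mB\|_F \sqrt{ \frac{\log(n+p)}{\SNR} } .
\end{equation*}
The probability bound holds conditionally on $\mB$ and uniformly in it, so integrating out $\mB$ gives the unconditional $O_P$ statement.

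The main point needing care is the conditioning. The statement says only that the rows are independent given $\mB$, but the argument also needs each $\mN_i$ to stay $\nu_i$-subgaussian and mean zero given $\mB$. I would state this as the interpretation of the hypothesis, as is standard in leave-one-out arguments where $\mB$ does not depend on the rows being controlled. Beyond that, the argument is a direct column-by-column subgaussian bound. It avoids needing a Hanson--Wright-type bound for the full quadratic form, at the price of a $\log(n+p)$ factor rather than a sharper $\|\mB\|_F + \|\mB\|\sqrt{\log n}$ rate.
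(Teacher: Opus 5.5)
Your proposal is correct and follows essentially the same route as the paper. Both arguments expand the squared row norm as $\sigmatrue{i}^{-2}\sum_{k}(\mN_i^\top \mB_{\cdot,k})^2$, bound each term by a subgaussian tail with a union bound over $(i,k)\in[n]\times[p]$, sum over $k$ to obtain $\|\mB\|_F^2$, and finish with $\nu_i/\sigmatrue{i}^2 \le 1/\SNR$. Your version is slightly more careful in two places: you keep the factor $\|\mB_{\cdot,k}\|^2$ in the per-entry bound (the paper's intermediate display drops it and restores it a line later), and you state explicitly the conditional-subgaussianity reading of the hypothesis that the paper uses implicitly.
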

\begin{proof}
By definition,
\begin{equation} \label{eq:SNM:tti:expand}
\left\| \mSigmatrue^{-1/2} \mN \mB \right\|_{\tti}^2
= \max_{i \in [n]} \frac{1}{\sigmatrue{i}^2} 
	\sum_{k=1}^p \left| \mN_{i,\cdot}^\top \mB_{\cdot,k} \right|^2 .
\end{equation}
By standard concentration inequalities, it holds with high probability that, for all $i \in [n]$ and $k \in [p]$,
\begin{equation*}
\left| \mN_{i,\cdot}^\top \mB_{\cdot,k} \right|
\le C \sqrt{ \nu_i \log (n+p) } .
\end{equation*}
It follows that
\begin{equation*}
\left\| \mSigmatrue^{-1/2} \mN \mB \right\|_{\tti}^2
\le C \max_{i \in [n]} \frac{ \nu_i \log (n+p)}{\sigmatrue{i}^2} \sum_{k=1}^n \left\| \mB_{\cdot,k} \right\|^2.
\end{equation*}
Applying this to Equation~\eqref{eq:SNM:tti:expand}, taking square roots and recalling the definition of $\SNR$ from Equation~\eqref{eq:def:SNR} completes the proof.
\end{proof}

\begin{lemma} \label{lem:UEU:frobenius}
Suppose that Assumption~\ref{assum:N} holds.
Then
\begin{equation*}
\left\| \mUtrue^\top \left( \mRobsd - \mRtrue \right) \mUtrue \right\|_F
\le 
C d \left\| \mRtrue \right\|_F
\left( \frac{ \log T }{ \SNR } + \sqrt{ \frac{ \log T }{ T \SNR } } \right) .
\end{equation*}
\end{lemma}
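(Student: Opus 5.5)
The plan is to decompose $\mRobsd - \mRtrue$ into a noise part and a variance-rescaling part, as in the proof of Lemma~\ref{lem:RobsdRtrue:spectral}, and then sandwich each piece between $\mUtrue^\top$ and $\mUtrue$. Write $\mD = \mSigmaobsd^{-1/2}\mSigmatrue^{1/2}$, a diagonal matrix. Since $\mRobsd = \frac1T \mSigmaobsd^{-1/2}(\mZtrue+\mN)\mM(\mZtrue+\mN)^\top\mSigmaobsd^{-1/2}$, we have
\begin{equation*}
\mRobsd - \mRtrue = \left( \mD \mRtrue \mD - \mRtrue \right)
 + \mD\, \frac{1}{T}\mSigmatrue^{-1/2}\left( \mN\mM\mZtrue^\top + \mZtrue\mM\mN^\top + \mN\mM\mN^\top \right)\mSigmatrue^{-1/2}\, \mD .
\end{equation*}

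\textbf{Rescaling term.} Write $\mD\mRtrue\mD - \mRtrue = (\mD-\mI)\mRtrue\mD + \mRtrue(\mD-\mI)$. Lemma~\ref{lem:sigmaratio} gives $\|\mD-\mI\| \le C(\log n/\SNR + \sqrt{\log n/(T\SNR)})$ and $\|\mD\|\le C$. Using $\|\mA\mB\|_F \le \|\mA\|\,\|\mB\|_F$ and $\|\mUtrue\| = 1$, the term $\|\mUtrue^\top(\cdot)\mUtrue\|_F$ is at most $C\|\mRtrue\|_F$ times this rate. Because $n = O(T)$, this fits inside the claimed bound.

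\textbf{Cross terms.} Set $\mB = \mM\mZtrue^\top\mSigmatrue^{-1/2}\mUtrue/\sqrt T = \mZtruetilde^\top\mUtrue$. This $\mB$ is deterministic and satisfies $\|\mB\|_F^2 = \operatorname{tr}(\mUtrue^\top\mRtrue\mUtrue) = \sum_k\lambdatrue{k} \le \sqrt d\,\|\mRtrue\|_F$.
\begin{itemize}
\item I would first handle $\mD = \mI$. Then the term equals $\frac{1}{\sqrt T}\mUtrue^\top\mSigmatrue^{-1/2}\mN\mB$.
\item Its $d\times d$ entries are sums $\sum_i U_{ik}\mN_i^\top \mB_{\cdot,l}/\sigmatrue{i}$ of independent subgaussians. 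Each entry is therefore $O_P(\sqrt{\log T/\SNR}\,\|\mB_{\cdot,l}\|)$.
\item Summing over the $d^2$ entries gives $C\sqrt{d\log T/\SNR}\,\|\mB\|_F/\sqrt T$.
\item Bounding $\|\mB\|_F \le \sqrt d\,\|\mRtrue\|_F^{1/2} \le \sqrt d\,\|\mRtrue\|_F$ (using $\|\mRtrue\|_F\ge 1$) yields $Cd\|\mRtrue\|_F\sqrt{\log T/(T\SNR)}$.
\end{itemize}
The diagonal factors $\mD$ are handled by writing $\mD = \mI + (\mD-\mI)$. The correction pieces are bounded crudely via Lemma~\ref{lem:SigmaobsdN:subgauss} and $\|\mZtruetilde\| = \sqrt{\lambdatrue{1}} \le \|\mRtrue\|_F^{1/2}$. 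Each such piece carries an extra small factor $\|\mD-\mI\|$, which is absorbed into the $\log T/\SNR$ term.

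\textbf{Quadratic term.} Consider $\frac1T\mUtrue^\top\mSigmatrue^{-1/2}\mN\mM\mN^\top\mSigmatrue^{-1/2}\mUtrue$. This is the main obstacle, because a naive spectral bound $\|\mSigmatrue^{-1/2}\mN\|^2/T \le C\log n/\SNR$ gives a bound of the right form but without $d$ or $\|\mRtrue\|_F$.
\begin{itemize}
\item Apply the crude bound entrywise instead. Each of the $d^2$ entries is at most $\|\mSigmatrue^{-1/2}\mN\|^2/T = O_P(\log n/\SNR)$ by Lemma~\ref{lem:SigmatrueN:spectral}.
\item This gives a Frobenius bound of $C d\log n/\SNR \le C d\|\mRtrue\|_F\log T/\SNR$, since $\|\mRtrue\|_F\ge\sqrt n\ge 1$ by Lemma~\ref{lem:Rtrue:facts}.
\item This avoids needing a finer Hanson--Wright argument for the within-row dependence of $\mN$.
\end{itemize}

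Summing the three contributions gives the stated bound.
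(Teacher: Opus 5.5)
Your argument is correct, but it arranges the add-and-subtract differently from the paper and handles the quadratic term by a different argument.

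\textbf{Rescaling.} The paper first swaps $\mSigmaobsd^{-1/2}$ for $\mSigmatrue^{-1/2}$ around the \emph{noisy} Gram matrix $\mZobsd\mM\mZobsd^\top$. It bounds that piece entrywise by Cauchy--Schwarz against $\|\mRobsd\|_F$ and Lemma~\ref{lem:sigmaprod:ratio}, then uses Lemma~\ref{lem:RobsdRtrue:frob} to pass to $\|\mRtrue\|_F$, which costs a factor $d$. After that, the remaining noise terms carry only $\mSigmatrue^{-1/2}$. Your factorization $\mSigmaobsd^{-1/2} = \mD\mSigmatrue^{-1/2}$ instead confines the rescaling to $\mRtrue$ itself. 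That piece becomes a one-line operator-norm bound, with no factor $d$ and no need for Lemma~\ref{lem:RobsdRtrue:frob}. The price is the correction terms in which $\mD-\mI$ multiplies a noise term. You control these correctly by spectral bounds, using $\log n/\SNR = O(1)$ from Assumption~\ref{assum:Rspec}. The paper also uses that assumption, implicitly, through Lemmas~\ref{lem:sigmaratio} and~\ref{lem:sigmaprod:ratio}.

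\textbf{Cross term.} This is essentially the paper's concentration argument. Your $\|\mB\|_F$ bookkeeping replaces its crude $\sqrt n$ bound per column; in fact $\|\mB\|_F^2 = \operatorname{tr}(\mRtrue) = n$.

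\textbf{Quadratic term.} Here the routes genuinely differ. The paper works entrywise. It handles the $i=j$ part with Lemma~\ref{lem:N:max}, and the $i\neq j$ part by concentration conditional on $\mN_i$, followed by Cauchy--Schwarz and $n = O(T)$. You instead use
$\|\mUtrue^\top\mD\,\tfrac{1}{T}\mSigmatrue^{-1/2}\mN\mM\mN^\top\mSigmatrue^{-1/2}\mD\,\mUtrue\| \le \|\mD\|^2\|\mSigmatrue^{-1/2}\mN\|^2/T$
together with Lemma~\ref{lem:SigmatrueN:spectral}. You wrote this step without the $\mD$ factors, but they only cost a constant. Your route is shorter and even gives $\sqrt d$ in place of $d$. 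However, it rests on the global spectral-norm bound, an $\varepsilon$-net argument over both $\R^n$ and $\R^T$. The paper's entrywise route needs only row-norm bounds and scalar concentration.
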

\begin{proof}
Recalling the definitions of $\mRobsd$ and $\mRtrue$ from Equations~\eqref{eq:def:Robsd} and~\eqref{eq:def:Rtrue} and applying the triangle inequality,
\begin{equation} \label{eq:EUE:frob:maintri} \begin{aligned}
\left\| \mUtrue^\top \left( \mRobsd - \mRtrue \right) \mUtrue \right\|_F
&\le 
\frac{1}{T}
\left\| \mUtrue^\top 
	\left( \mSigmaobsd^{-1/2} \mZobsd \mM \mZobsd^\top \mSigmaobsd^{-1/2}
	-  \mSigmatrue^{-1/2} \mZobsd \mM \mZobsd^\top \mSigmatrue^{-1/2}
	\right) \mUtrue \right\|_F \\
&~~~~~~+
\frac{1}{T}
\left\| \mUtrue^\top \mSigmatrue^{-1/2}
	\left( \mZobsd \mM \mZobsd^\top - \mZtrue \mM \mZtrue^\top \right)
	\mSigmatrue^{-1/2} \mUtrue \right\|_F .
\end{aligned} \end{equation}

For $k,\ell \in [d]$,
\begin{equation*} \begin{aligned}
\frac{1}{T} &
\left[ \mUtrue^\top
		\left( \mSigmaobsd^{-1/2} \mZobsd \mM \mZobsd^\top \mSigmaobsd^{-1/2}
        -  \mSigmatrue^{-1/2} \mZobsd \mM \mZobsd^\top \mSigmatrue^{-1/2}
        \right) \mUtrue \right]_{k,\ell} \\
&= \sum_{i=1}^n \sum_{j=1}^n \mRobsd_{i,j}
	\left( \frac{\sigmaobsd{i} \sigmaobsd{j}}{\sigmatrue{i} \sigmatrue{j}}
		- 1 \right) \Utrue{i,k} \mUtrue{j,\ell} ,
\end{aligned} \end{equation*}
and the Cauchy-Schwarz inequality implies
\begin{equation*} \begin{aligned}
& \left[ \frac{1}{T} 
	\mUtrue^\top
	\left( \mSigmaobsd^{-1/2} \mZobsd \mM \mZobsd^\top \mSigmaobsd^{-1/2}
        -  \mSigmatrue^{-1/2} \mZobsd \mM \mZobsd^\top \mSigmatrue^{-1/2}
        \right) \mUtrue \right]_{k,\ell}^2 \\
&~~~\le
\left\| \mRobsd \right\|_F^2
\sum_{i=1}^n \sum_{j=1}^n
	\left( \frac {\sigmaobsd{i} \sigmaobsd{j}} {\sigmatrue{i} \sigmatrue{j}} 
			- 1 \right)^2 {\Utrue{i,k}}^2 {\Utrue{j,\ell}}^2
\le C \left\| \mRobsd \right\|_F^2
	\max_{i,j \in [n]} 
	\left( \frac {\sigmaobsd{i} \sigmaobsd{j}} {\sigmatrue{i} \sigmatrue{j}}
                - 1 \right)^2 ,
\end{aligned} \end{equation*}
where the second inequality follows from the fact that $\mUtrue$ has orthonormal columns.
Applying Lemma~\ref{lem:sigmaprod:ratio}, summing over $k,\ell \in [d]$ and taking square roots,
\begin{equation*} \begin{aligned}
\frac{1}{T} &
\left\| \mUtrue^\top 
	\left( \mSigmaobsd^{-1/2} \mZobsd \mM \mZobsd^\top \mSigmaobsd^{-1/2}
	-  \mSigmatrue^{-1/2} \mZobsd \mM \mZobsd^\top \mSigmatrue^{-1/2}
	\right) \mUtrue \right\|_F \\
&\le C d \left\| \mRobsd \right\|_F 
	\left( \frac{ \log n }{ \SNR } + \sqrt{ \frac{ \log n }{ T \SNR } }
	\right) .
\end{aligned} \end{equation*}
Applying Lemma~\ref{lem:RobsdRtrue:frob},
\begin{equation} \label{eq:USigmaTermsU:frob} \begin{aligned}
\frac{1}{T} &
\left\| \mUtrue^\top 
	\left( \mSigmaobsd^{-1/2} \mZobsd \mM \mZobsd^\top \mSigmaobsd^{-1/2}
	-  \mSigmatrue^{-1/2} \mZobsd \mM \mZobsd^\top \mSigmatrue^{-1/2}
	\right) \mUtrue \right\|_F \\
&\le C d \left\| \mRtrue \right\|_F 
	\left( \frac{ \log n }{ \SNR } + \sqrt{ \frac{ \log n }{ T \SNR } } \right) .
\end{aligned} \end{equation}

Again fixing $k,\ell \in [d]$ and expanding the matrix-vector products,
\begin{equation*} \begin{aligned}
\frac{1}{T} \left[ \mUtrue^\top \mSigmatrue^{-1/2}
                \mN \mM \mZtrue^\top 
                \mSigmatrue^{-1/2} \mUtrue \right]_{k,\ell}
&= \frac{1}{T} \sum_{i=1}^n \sum_{j=1}^n
	\frac{ \mN_i^\top \mM \vZtrue{j} }{ \sigmatrue{i} \sigmatrue{j} }
	\Utruevec{ik} \Utruevec{j\ell}  \\
&= \frac{1}{\sqrt{T}}
	\sum_{i=1}^n \frac{ \Utruevec{ik} \mN_i^\top }{ \sigmatrue{i} }
	\sum_{j=1}^n \frac{ \Utruevec{j\ell} \mM \vZtrue{j} }
			{ \sqrt{T} \sigmatrue{j} } .
\end{aligned} \end{equation*}
Since the rows of $\mN$ are independent by assumption, standard concentration inequalities yield
\begin{equation*} \begin{aligned}
\left| \frac{1}{T} \left[ \mUtrue^\top \mSigmatrue^{-1/2}
                \mN \mM \mZtrue^\top 
                \mSigmatrue^{-1/2} \mUtrue \right]_{k,\ell} \right|
&\le \frac{1}{\sqrt{T}}
\sqrt{ \sum_{i=1}^n \frac{ \Utruevec{ik}^2 \nu_i \log n }{ \sigmatrue{i}^2 } }
	\left\| \sum_{j=1}^n \frac{ \mM \vZtrue{j} \Utruevec{j\ell} }
		{ \sqrt{T} \sigmatrue{j} } \right\| \\
&\le \frac{1}{\sqrt{T}} \sqrt{ \frac{ \log n }{ \SNR } }
	\sqrt{ \sum_{j=1}^n \left\| \frac{ \mM \vZtrue{j} }
		{ \sqrt{T} \sigmatrue{j} } \right\|^2 } ,
\end{aligned} \end{equation*}
where the second inequality follows from the triangle and Cauchy-Schwarz inequalities.
Since $\| \mM \vZtrue{j} / \sqrt{T} \sigmatrue{j} \| = 1$ for all $j \in [n]$,
\begin{equation*}
\left| \frac{1}{T} \left[ \mUtrue^\top \mSigmatrue^{-1/2}
                \mN \mM \mZtrue^\top 
                \mSigmatrue^{-1/2} \mUtrue \right]_{k,\ell} \right|
\le \sqrt{ \frac{n}{T}  \frac{ \log n }{ \SNR } } .
\end{equation*}
Squaring, summing over $k,\ell \in [d]$, and taking square roots,
\begin{equation} \label{eq:UEU:NZbound} \begin{aligned}
\left\|  \frac{1}{T} \mUtrue^\top \mSigmatrue^{-1/2}
                \mN \mM \mZtrue^\top 
                \mSigmatrue^{-1/2} \mUtrue \right\|_F
\le C d \sqrt{ \frac{ n \log n }{ T \SNR } } .
\end{aligned} \end{equation}

Once more fixing $k,\ell \in [d]$ and expanding,
\begin{equation*} 
\left[ \frac{1}{T} \mUtrue^\top \mSigmatrue^{-1/2}
                \mN \mM \mN^\top 
                \mSigmatrue^{-1/2} \mUtrue \right]_{k,\ell}
= \sum_{i=1}^n 
	\frac{ \mN_i^\top \mM \mN_i \Utrue{i,k} \Utrue{i,\ell} }
		{ T \sigmatrue{i}^2 } 
+ \sum_{i=1}^n \sum_{j : j \neq i}
	\frac{ \mN_i^\top \mM \mN_j \Utrue{i,k} \Utrue{j,\ell} }
		{ T \sigmatrue{i} \sigmatrue{j} } .
\end{equation*}
Applying Lemma~\ref{lem:N:max},
\begin{equation*}
\left| \left[ \frac{1}{T} \mUtrue^\top \mSigmatrue^{-1/2}
                \mN \mM \mN^\top 
                \mSigmatrue^{-1/2} \mUtrue \right]_{k,\ell} \right|
\le
\frac{C \log T}{\SNR} \sum_{i=1}^n |\Utrue{i,k} \Utrue{i,\ell}|
+ \left| \sum_{i=1}^n \sum_{j : j \neq i}
	\frac{ \mN_i^\top \mM \mN_j \Utrue{i,k} \Utrue{j,\ell} }
		{ T \sigmatrue{i} \sigmatrue{j} } \right| .
\end{equation*}
Applying the Cauchy-Schwarz and triangle inequalities,
\begin{equation} \label{eq:USNMNSU:finalstep} 
\left| \left[ \frac{1}{T} \mUtrue^\top \mSigmatrue^{-1/2}
                \mN \mM \mN^\top 
                \mSigmatrue^{-1/2} \mUtrue \right]_{k,\ell} \right|
\le
\frac{C \log T}{\SNR} 
+ \sum_{i=1}^n \left| \frac{ \Utrue{i,k} \mN_i^\top \mM }{ \sigmatrue{i} }
	\sum_{j : j \neq i} \frac{ \mN_j \Utrue{j,\ell} }
							{ T \sigmatrue{j} } \right| ,
\end{equation}
Since the rows of $\mN$ are independent by assumption,
subgaussian concentration inequalities yield that with high probability,
for all $i \in [n]$,
\begin{equation*}
\left| \frac{ \Utrue{i,k} \mN_i^\top \mM }{ \sigmatrue{i} }
	\sum_{j : j \neq i}
	\frac{ \mN_j \Utrue{j,\ell} }
		{ T \sigmatrue{j} } \right|
\le
\frac{ C }{T}
\left\| \frac{ \Utrue{i,k} \mN_i^\top \mM }{ \sigmatrue{i} } \right\|
\sqrt{ \sum_{j: j \neq i} \frac{ {\Utrue{j,\ell}}^2 \nu_j \log T }
							{ \sigmatrue{j}^2 } }
\le
\frac{ C }{T}
\left\| \frac{ \Utrue{i,k} \mN_i^\top \mM }{ \sigmatrue{i} } \right\|
\sqrt{ \frac{ \log T }{ \SNR } } .
\end{equation*}
Applying this to Equation~\eqref{eq:USNMNSU:finalstep} and using
the Cauchy-Schwarz inequality along with the fact that $\mM$ is a projection,
\begin{equation*} 
\left| \left[ \frac{1}{T} \mUtrue^\top \mSigmatrue^{-1/2}
                \mN \mM \mN^\top 
                \mSigmatrue^{-1/2} \mUtrue \right]_{k,\ell} \right|
\le
\frac{C \log T}{\SNR} 
+ \frac{ C }{T} \sqrt{ \frac{\log T}{\SNR} }
	\sqrt{ \sum_{i=1}^n \frac{ \| \mN_i \|^2 }{ \sigmatrue{i}^2 } } .
\end{equation*}
Again using Lemma~\ref{lem:N:max},
\begin{equation*}
\left| \left[ \frac{1}{T} \mUtrue^\top \mSigmatrue^{-1/2}
                \mN \mM \mN^\top 
                \mSigmatrue^{-1/2} \mUtrue \right]_{k,\ell} \right|
\le
\frac{C \log T}{\SNR} 
+ \frac{ C }{T} \sqrt{ \frac{\log T}{\SNR} }
	\sqrt{ \sum_{i=1}^n \frac{ T \nu_i \log T }{ \sigmatrue{i}^2 } }
\le
\frac{C \log T}{\SNR} .
\end{equation*}
Squaring and summing over all $k,\ell \in [d]$,
\begin{equation} \label{eq:USNMNSU:done} 
\left\| \frac{1}{T} \mUtrue^\top \mSigmatrue^{-1/2}
                \mN \mM \mN^\top 
                \mSigmatrue^{-1/2} \mUtrue \right\|_F
\le \frac{C d \log T}{ \SNR} .
\end{equation}

Recalling $\mZobsd = \mZtrue + \mN$ and applying the triangle inequality
and Equations~\eqref{eq:UEU:NZbound} and~\eqref{eq:USNMNSU:done},
\begin{equation*}
\frac{1}{T}
\left\| \mUtrue^\top \mSigmatrue^{-1/2}
    \left( \mZobsd \mM \mZobsd^\top - \mZtrue \mM \mZtrue^\top \right)
    \mSigmatrue^{-1/2} \mUtrue \right\|_F
\le
C d \left( \sqrt{ \frac{ n \log T }{ T \SNR } } + \frac{\log T}{\SNR} \right) .
\end{equation*}
Applying this and Equation~\eqref{eq:USigmaTermsU:frob} to Equation~\eqref{eq:EUE:frob:maintri},
\begin{equation*}
\left\| \mUtrue^\top \left( \mRobsd - \mRtrue \right) \mUtrue \right\|_F
\le 
C d \left\| \mRtrue \right\|_F
    \left( \frac{ \log T }{ \SNR } + \sqrt{ \frac{ \log T }{ T \SNR } } \right)
+
C d \left( \frac{\log T}{\SNR} + \sqrt{ \frac{ n \log T }{ T \SNR } } \right).
\end{equation*}
Collecting terms and applying Lemma~\ref{lem:Rtrue:facts}  completes the proof.
\end{proof}

\begin{lemma} \label{lem:RobsdRtrueULam:mxtti}
Suppose that Assumptions~\ref{assum:N} and~\ref{assum:Rspec} hold.
Then
\begin{equation*}
\left\| \left( \mRobsd - \mRtrue \right) \mUtrue \mLambdatrue^{-1/2} \right\|_{\tti} 
\le 
C \sqrt{ \frac{ d \log T }{ \SNR \lambdatrue{d} } }
\left[ 1
+ \left( \sqrt{\frac{ \log n }{ \SNR }} + \sqrt{ \frac{ 1 }{ T } } \right) \| \mRtrue \|_{\tti}
\right],
\end{equation*}
\end{lemma}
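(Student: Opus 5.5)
We bound the rows of $(\mRobsd - \mRtrue)\mUtrue\mLambdatrue^{-1/2}$ by decomposing $\mRobsd - \mRtrue$ into a part involving only the $\mSigmatrue$-normalized noise and a part involving only the variance-ratio errors. We then bound each part in $(\tti)$-norm after right-multiplication by $\mUtrue\mLambdatrue^{-1/2}$.

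\textbf{Decomposition.} Write $\mD = \mSigmaobsd^{-1/2}\mSigmatrue^{1/2}$, so that
\begin{equation*}
\mRobsd = \mD \widetilde{\mRobsd} \mD,
\qquad
\widetilde{\mRobsd} = \frac{1}{T}\mSigmatrue^{-1/2}\mZobsd\mM\mZobsd^\top\mSigmatrue^{-1/2}.
\end{equation*}
Then
\begin{equation*}
\mRobsd - \mRtrue = \mD(\widetilde{\mRobsd}-\mRtrue)\mD + (\mD\mRtrue\mD - \mRtrue).
\end{equation*}
By Lemma~\ref{lem:sigmaratio}, $\mD$ is diagonal with entries bounded by $C$ and $\|\mD-\mI\| \le C(\log n/\SNR + \sqrt{\log n/(T\SNR)})$. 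The second piece is
\begin{equation*}
(\mD-\mI)\mRtrue\mD + \mRtrue(\mD-\mI).
\end{equation*}
Since $\mRtrue\mUtrue\mLambdatrue^{-1/2} = \mUtrue\mLambdatrue^{1/2}$, this piece contributes at most about
\begin{equation*}
\|\mD-\mI\|\,\|\mRtrue\|_{\tti}\,\|\mUtrue\mLambdatrue^{-1/2}\| \le \|\mD-\mI\|\,\|\mRtrue\|_{\tti}/\sqrt{\lambdatrue{d}}.
\end{equation*}
For the term $\mRtrue(\mD-\mI)\mUtrue\mLambdatrue^{-1/2}$ we use only $\|\mRtrue\|_{\tti}$ times a spectral norm. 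Using $\sqrt{\log n/\SNR}\le 1$ (from Assumption~\ref{assum:Rspec}), this falls inside the bracketed $\|\mRtrue\|_{\tti}$ term of the claim.

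\textbf{Noise part.} Write $\widetilde{\mRobsd}-\mRtrue$ as
\begin{equation*}
\frac1T\left(\mSigmatrue^{-1/2}\mN\mM\mZtrue^\top\mSigmatrue^{-1/2} + \mSigmatrue^{-1/2}\mZtrue\mM\mN^\top\mSigmatrue^{-1/2} + \mSigmatrue^{-1/2}\mN\mM\mN^\top\mSigmatrue^{-1/2}\right).
\end{equation*}
We handle the three terms after multiplying by $\mUtrue\mLambdatrue^{-1/2}$ on the right; the outer $\mD$ factors cost only constants in $(\tti)$-norm and spectral norm.

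The first term is $\mSigmatrue^{-1/2}\mN\mB$ with $\mB = T^{-1}\mM\mZtrue^\top\mSigmatrue^{-1/2}\mUtrue\mLambdatrue^{-1/2}$, which is deterministic. Since $\mB = T^{-1/2}\mZtruetildetop\mUtrue\mLambdatrue^{-1/2}$, we get $\|\mB\|_F = T^{-1/2}\|\mUtrue\|_F = \sqrt{d/T}$. Lemma~\ref{lem:tti:mx:highprob} then gives a bound of order $\sqrt{d\log T/(T\SNR)}$, which is within the leading term $C\sqrt{d\log T/(\SNR\lambdatrue{d})}$ because $\lambdatrue{d}\le n = O(T)$.

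The second term is $T^{-1/2}\mZtruetilde\,\mN^\top\mSigmatrue^{-1/2}\mUtrue\mLambdatrue^{-1/2}$, where we used $\mM\mZtruetildetop$-type identities to write it in this form. Its $i$-th row has norm at most
\begin{equation*}
\|\mZtruetilde_i\|\cdot T^{-1/2}\|\mM\mN^\top\mSigmatrue^{-1/2}\mUtrue\|\,\lambdatrue{d}^{-1/2}.
\end{equation*}
The $T\times d$ matrix $\mN^\top\mSigmatrue^{-1/2}\mUtrue$ is a sum of independent rows weighted by the columns of $\mUtrue$. A net argument over $\R^T\times\R^d$ bounds its spectral norm by $\sqrt{T\log T/\SNR}$, up to the factor $\sqrt d$ for the column dimension. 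This yields the leading term $\sqrt{d\log T/(\SNR\lambdatrue{d})}$, using $\|\mZtruetilde_i\|=1$.

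The third term, the quadratic noise term, is the main obstacle. Split it into a diagonal part and an off-diagonal part. The diagonal part is handled by Lemma~\ref{lem:N:max}, which gives $\|\mN_i\|^2/(T\sigmatrue{i}^2)\le C\log T/\SNR$. Multiplying by $\|\mUtrue\mLambdatrue^{-1/2}\|_{\tti}\le\lambdatrue{d}^{-1/2}$ gives at most $(\log T/\SNR)\lambdatrue{d}^{-1/2}$, which is dominated by the leading term. For the off-diagonal part, row $i$ equals
\begin{equation*}
T^{-1}\sigmatrue{i}^{-1}\mN_i^\top\mM\sum_{j\neq i}\mN_j\sigmatrue{j}^{-1}\mUtrue_j\mLambdatrue^{-1/2}.
\end{equation*}
Conditioning on $\{\mN_j\}_{j\ne i}$, we apply subgaussian concentration to $\mN_i$, in the spirit of Lemma~\ref{lem:tti:mx:highprob}. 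This requires the Frobenius norm of $\mM\mN_{-i}^\top\mSigmatrue^{-1/2}\mUtrue\mLambdatrue^{-1/2}$, which is at most $\sqrt d\,\|\mSigmatrue^{-1/2}\mN\|\,\lambdatrue{d}^{-1/2}$. Lemma~\ref{lem:SigmatrueN:spectral} bounds this by $\sqrt{dT\log n/(\SNR\lambdatrue{d})}$. Combining, the off-diagonal part is at most
\begin{equation*}
T^{-1}\sqrt{\log T/\SNR}\,\sqrt{dT\log n/(\SNR\lambdatrue{d})}.
\end{equation*}
This is within the leading term because $\log n/(T\SNR)=o(1)$.

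\textbf{Assembly.} Collecting the pieces and applying a union bound over $i\in[n]$ (with $n=O(T)$, so logarithms in $n$ and $T$ are interchangeable) gives the stated bound. The delicate points are the leave-one-out decoupling in the quadratic term, and checking that every cross term is dominated by either $1$ or $(\sqrt{\log n/\SNR}+\sqrt{1/T})\|\mRtrue\|_{\tti}$ inside the bracket.
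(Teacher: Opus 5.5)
Your decomposition is the paper's: its $\mSigmaobsd^{-1/2}(\cdot)\mSigmaobsd^{-1/2}$ is your $\mD\,\mSigmatrue^{-1/2}(\cdot)\mSigmatrue^{-1/2}\mD$ with $\mD=\mSigmaobsd^{-1/2}\mSigmatrue^{1/2}$. Your treatment of the variance-ratio piece and of the diagonal part of the quadratic term is fine. However, the sentence ``the outer $\mD$ factors cost only constants'' hides a step that fails as written. Only the left $\mD$ is outer. The right one sits between the noise matrix and the deterministic factor: the noise terms are really $(\widetilde{\mRobsd}-\mRtrue)\,\mD\,\mUtrue\mLambdatrue^{-1/2}$, and $\mD$ depends on every row of $\mN$, including $\mN_i$. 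So in your first term the matrix you hand to Lemma~\ref{lem:tti:mx:highprob} is $T^{-1/2}\mZtruetildetop\mD\mUtrue\mLambdatrue^{-1/2}$, not the deterministic $T^{-1/2}\mZtruetildetop\mUtrue\mLambdatrue^{-1/2}$. That lemma's row-wise concentration needs $\mN_i$ to be independent of $\mB$, so it does not apply. Nor is there a general inequality $\|\mA\mD\mB\|_{\tti}\le C\|\mA\mB\|_{\tti}$ to fall back on; it fails whenever $\mA\mB=\mZero$ but $\mA\mD\mB\neq\mZero$.

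The repair is short, and there are two options.
\begin{itemize}
\item Drop the projection onto $\mUtrue$ in this term. Lemma~\ref{lem:tti:mx:highprob} with the deterministic $\mB=\mZtruetildetop$ (so $\|\mB\|_F=\sqrt n$) gives $T^{-1/2}\|\mSigmatrue^{-1/2}\mN\mZtruetildetop\|_{\tti}\|\mD\|\lambdatrue{d}^{-1/2}\le C\sqrt{n\log n/(T\SNR\lambdatrue{d})}$. This lies inside the leading term because $n=O(T)$.
\item Do as the paper does: split $\mD\mUtrue=\mUtrue+(\mD-\mI)\mUtrue$, run your argument on the first piece, and pay $\|\mD-\mI\|$ on the second. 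This is where the paper invokes $\kappa=o(T)$.
\end{itemize}

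You should also say why the right-hand $\mD$ is harmless in your other two terms, since it is not automatic.
\begin{itemize}
\item In the second term, bound $\|\mN^\top\mSigmatrue^{-1/2}\mD\mUtrue\|\le C\|\mSigmatrue^{-1/2}\mN\|$ via Lemma~\ref{lem:SigmatrueN:spectral}. A net argument over $\R^T\times\R^d$ with the fixed $\mUtrue$ no longer covers it.
\item In the off-diagonal quadratic term, your leave-one-out conditioning survives because $\mD_{jj}$ is a function of $\mN_j$ alone. Hence the matrix $\mM\sum_{j\neq i}\mN_j\sigmatrue{j}^{-1}\mD_{jj}\mUtrue_j\mLambdatrue^{-1/2}$ multiplying $\mN_i$ is still determined by $\{\mN_j\}_{j\neq i}$.
\end{itemize}
This is the real advantage of conditioning on $\{\mN_j\}_{j\neq i}$ rather than on $\mN_i$, as the paper does. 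With $\mN_i$ fixed, the summands $\mN_i^\top\mM\mN_j\mD_{jj}\Utrue{j,k}$ are independent but no longer mean zero, which is why the paper must peel off $\mD-\mI$ first.

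With these repairs your proof is the paper's argument with minor local variations. For example, your spectral-norm handling of $\mD\mRtrue\mD-\mRtrue$ avoids the factor $\sqrt d$ that the paper's entrywise Cauchy--Schwarz incurs. Using the first fix, your version does not need $\kappa=o(T)$ anywhere in this lemma.
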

\begin{proof}
Recalling the definitions of $\mRobsd$ and $\mRtrue$ from Equations~\eqref{eq:def:Robsd} and~\eqref{eq:def:Rtrue}, respectively, and applying the triangle inequality,
\begin{equation} \label{eq:bigexpand:term1:SigmaSplit} \begin{aligned}
&\left\| \left( \mRobsd - \mRtrue \right) \mUtrue \mLambdatrue^{-1/2} \right\|_{\tti} \\
&~~~~~~\le
\frac{1}{T}\left\| \mSigmaobsd^{-1/2} \left( \mZobsd \mM \mZobsd^\top 
			- \mZtrue \mM \mZtrue^\top \right)
		\mSigmaobsd^{-1/2} \mUtrue \mLambdatrue^{-1/2} \right\|_{\tti} \\
&~~~~~~~~~~~~+
\frac{1}{T}\left\| \left( \mSigmaobsd^{-1/2} \mZtrue \mM \mZtrue^\top \mSigmaobsd^{-1/2} 
	- \mSigmatrue^{-1/2} \mZtrue \mM \mZtrue^\top \mSigmatrue^{-1/2} 
	\right) \mUtrue \mLambdatrue^{-1/2} \right\|_{\tti} .
\end{aligned} \end{equation}
Recalling from Equation~\eqref{eq:def:Zobsd} that $\mZobsd = \mZtrue + \mN$ applying the triangle inequality, 
and using basic properties of the $(\tti)$-norm with Lemma~\ref{lem:sigmaratio},
\begin{equation} \label{eq:EUS:ZNterms} \begin{aligned}
&\frac{1}{T} \left\| \mSigmaobsd^{-1/2} \left( \mZobsd \mM \mZobsd^\top 
			- \mZtrue \mM \mZtrue^\top \right)
			\mSigmaobsd^{-1/2} \mUtrue \mLambdatrue^{-1/2} 
	\right\|_{\tti} \\
&~~~\le
\frac{C}{T} 
\left\| \mSigmatrue^{-1/2} \mN \mM 
	\mZtrue^\top \mSigmaobsd^{-1/2} \mUtrue \mLambdatrue^{-1/2} \right\|_{\tti}  \\
&~~~~~~+ \frac{C}{T}
	\left\| \mSigmatrue^{-1/2} \mZtrue \mM \mN^\top \mSigmaobsd^{-1/2} 
		\mUtrue \mLambdatrue^{-1/2} \right\|_{\tti} \\
&~~~~~~+ \frac{C}{T}
	\left\| \mSigmatrue^{-1/2} \mN \mM \mN^\top \mSigmaobsd^{-1/2} \mUtrue \mLambdatrue^{-1/2} 
		\right\|_{\tti} .
\end{aligned} \end{equation}

For ease of notation, recall $\mZtruetilde$ from Equation~\eqref{eq:def:Ztruetilde}.
We have
\begin{equation} \label{eq:EUS:ZNterms:NMZ:step1}
\begin{aligned}
\frac{1}{T}\!
\left\| \mSigmatrue^{-1/2} \mN \mM 
	\mZtrue \mSigmaobsd^{-1/2} \mUtrue \mLambdatrue^{-1/2} \right\|_{\tti}
\!=
\frac{1}{\sqrt{T}}\!
\left\| \mSigmatrue^{-1/2} \mN \mZtruetildetop \mSigmatrue^{1/2} 
		\mSigmaobsd^{-1/2} \mUtrue \mLambdatrue^{-1/2} \right\|_{\tti} .
\end{aligned} \end{equation}
Adding and subtracting appropriate quantities and applying the triangle inequality,
\begin{equation} \label{eq:EUS:ZNterms:NMZ:step2}
\begin{aligned}
&\left\| \mSigmatrue^{-1/2} \mN \mZtruetildetop \mSigmaobsd^{-1/2} 
			\mSigmatrue^{1/2} \mUtrue \mLambdatrue^{-1/2} \right\|_{\tti} \\
&~~~\le
\left\| \mSigmatrue^{-1/2} \mN \mZtildestartop 
		\mUtrue \mLambdatrue^{-1/2} \right\|_{\tti}
+
\left\| \mSigmatrue^{-1/2} \mN \mZtildestartop 
		\left( \mI - \mSigmaobsd^{-1/2} \mSigmatrue^{1/2} \right)
		\mUtrue \mLambdatrue^{-1/2} \right\|_{\tti} .
\end{aligned} \end{equation}
Applying Lemma~\ref{lem:tti:mx:highprob} with $\mB = \mZtildestartop \mUtrue \mLambdatrue^{-1/2}$,
\begin{equation} \label{eq:EUS:ZNterms:NMZ:step2:term1} \begin{aligned}
\left\| \mSigmatrue^{-1/2} \mN \mZtildestartop \mUtrue \mLambdatrue^{-1/2} \right\|_{\tti}  
&\le C \left\| \mZtildestartop \mUtrue \mLambdatrue^{-1/2} \right\|_F
	\sqrt{ \frac{\log n }{ \SNR } } 
\le C \sqrt{ \frac{ d \log n }{ \SNR } } ,
\end{aligned} \end{equation}
where the second inequality follows from the fact that
\begin{equation*}
\mZtildestar \mZtildestartop = \mRtrue
= \mUtrue \mLambdatrue \mUtrue^\top .
\end{equation*}

By basic properties of the $(\tti)$-norm,
\begin{equation*}
\left\| \mSigmatrue^{-1/2} \mN \mZtildestartop 
		\left( \mI - \mSigmaobsd^{-1/2} \mSigmatrue^{1/2} \right)
		\mUtrue \mLambdatrue^{-1/2} \right\|_{\tti}  
\le 
\frac{1}{\sqrt{\lambdatrue{d} }} 
	\left\| \mSigmatrue^{1/2} \mN \mZtildestartop \right\|_{\tti}
	\left\| \mI - \mSigmaobsd^{-1/2} \mSigmatrue^{1/2} \right\| .
\end{equation*} 
Applying Lemma~\ref{lem:tti:mx:highprob} with $\mB = \mZtildestartop$,
\begin{equation*} \begin{aligned}
\left\| \mSigmatrue^{-1/2} \mN \mZtildestartop 
		\left( \mI - \mSigmaobsd^{-1/2} \mSigmatrue^{1/2} \right)
		\mUtrue \mLambdatrue^{-1/2} \right\|_{\tti}  
&\le 
\frac{C}{\sqrt{\SNR \lambdatrue{d}}} \| \mZtildestar \|_F 
	\left\| \mI - \mSigmaobsd^{-1/2} \mSigmatrue^{1/2} \right\| \\
&\le C \sqrt{ \frac{ d \lambdatrue{1} \log n }{ \lambdatrue{d} \SNR } } 
	\left\| \mI - \mSigmaobsd^{-1/2} \mSigmatrue^{1/2} \right\| ,
\end{aligned} \end{equation*}
where the second inequality follows from the fact that the $d$ non-zero singular values of $\mZtildestar$ are precisely the square roots of the eigenvalues of $\mRtrue$.
Applying this and Equation~\eqref{eq:EUS:ZNterms:NMZ:step2:term1} to Equation~\eqref{eq:EUS:ZNterms:NMZ:step2},
\begin{equation*}
\left\| \mSigmatrue^{-1/2} \mN \mZtildestartop \mSigmaobsd^{-1/2} \mSigmatrue^{1/2}
			\mUtrue \mLambdatrue^{-1/2} \right\|_{\tti}
\le C \sqrt{ \frac{ d \log n }{ \SNR } } 
	\left( 1 + \kappa \left\| \mI - \mSigmaobsd^{-1/2} \mSigmatrue^{1/2} \right\| \right) .
\end{equation*}
Applying Lemma~\ref{lem:sigmaratio} and using our assumption in Equation~\eqref{eq:assum:noisepower} and our assumption that $\kappa = o(T)$,
\begin{equation*}
\left\| \mSigmatrue^{-1/2} \mN \mZtildestartop 
			\mSigmaobsd^{-1/2} \mSigmatrue^{1/2}
			\mUtrue \mLambdatrue^{-1/2} \right\|_{\tti}
\le C \sqrt{ \frac{ d \log n }{ \SNR } } .
\end{equation*}
Applying this to Equation~\eqref{eq:EUS:ZNterms:NMZ:step1},
\begin{equation} \label{eq:EUS:ZNterms:NMZ:done}
\frac{1}{T}
\left\| \mSigmatrue^{-1/2} \mN \mM \mZtrue \mSigmaobsd^{-1/2} \mUtrue \mLambdatrue^{-1/2} \right\|_{\tti}
\le C \sqrt{ \frac{ d \log n }{ T \SNR } }
\le C \sqrt{ \frac{ d \log n }{ \SNR \lambdatrue{d} } },
\end{equation}
where we have used the trivial bound $\lambdatrue{d} \le \lambdatrue{1}$ followed by Lemma~\ref{lem:Rtrue:facts} and Equation~\eqref{eq:assum:nT}.

By basic properties of the $(\tti)$-norm,
\begin{equation} \label{eq:EUS:ZNterms:ZMN:step1}
\frac{1}{T}
\left\| \mSigmaobsd^{-1/2} \mZtrue \mM \mN^\top \mSigmaobsd^{-1/2} \mUtrue \mLambdatrue^{-1/2} \right\|_{\tti}
\le
\frac{C}{\sqrt{T \lambdatrue{d}}}
\left\| \mZtildestar \mN^\top \mSigmaobsd^{-1/2} \mUtrue \right\|_{\tti}.
\end{equation}
Applying the triangle inequality and using basic properties of the $(\tti)$-norm,
\begin{equation} \label{eq:EUS:ZNterms:ZMN:triangle}
\left\| \mZtildestar \mN^\top \mSigmaobsd^{-1/2} \mUtrue \right\|_{\tti}
\le
\left\| \mZtildestar \mN^\top \mSigmatrue^{-1/2} \mUtrue \right\|_{\tti}
+ 
\left\|  \mZtildestar \mN^\top \mSigmatrue^{-1/2} \right\|_{\tti}
\left\| \mSigmatrue^{1/2} \mSigmaobsd^{-1/2} - \mI \right\| .
\end{equation}

Fixing $i \in [n]$ and $k \in [d]$,
\begin{equation*}
\left[ \mZtildestar \mN^\top \mSigmatrue^{-1/2} \mUtrue \right]_{ik}
= \sum_{j=1}^n \frac{ \langle \mZtildestar_{i\cdot}, \mN_j \rangle \Utrue{j,k} }{ \sigmatrue{j} } .
\end{equation*}
Since the rows of $\mZtildestar$ are all unit-norm and the rows of $\mN$ are subgaussian random vectors, standard concentration inequalities yield that with high probability, it holds for all $i \in [n]$ that
\begin{equation*}
\left| \left[ \mZtildestar \mN^\top \mSigmatrue^{-1/2} \mUtrue \right]_{i,k} \right|
\le C \sqrt{ \sum_{j=1}^n \frac{ \nu_j {\Utrue{j,k}}^2 \log n }
			{ \sigmatrue{j}^2 } }
\le C \sqrt{ \frac{ \log n }{ \SNR } } .
\end{equation*}
It follows that with high probability,
\begin{equation} \label{eq:EUS:ZNterms:ZMN:term1}
\left\| \mZtildestar \mN^\top \mSigmatrue^{-1/2} \mUtrue \right\|_{\tti}
= \Op{ \sqrt{ \frac{ d \log n }{ \SNR } } }.
\end{equation}
Similarly, 
\begin{equation*}
\left\|  \mZtildestar \mN^\top \mSigmatrue^{-1/2} \right\|_{\tti}
\le C \sqrt{ \sum_{j=1}^n \frac{ \nu_j \log n}{ \sigmatrue{j}^2 } }
\le C \sqrt{ \frac{ n \log n }{ \SNR } } ,
\end{equation*}
so that
\begin{equation*}
\left\|  \mZtildestar \mN^\top \mSigmatrue^{-1/2} \right\|_{\tti}
\left\| \mSigmatrue^{1/2} \mSigmaobsd^{-1/2} - \mI \right\|
\le C \sqrt{ \frac{ n \log n }{ \SNR } }
	\left\| \mSigmatrue^{1/2} \mSigmaobsd^{-1/2} - \mI \right\|.
\end{equation*}
Applying this and Equation~\eqref{eq:EUS:ZNterms:ZMN:term1} to Equation~\eqref{eq:EUS:ZNterms:ZMN:triangle},
\begin{equation*}
\left\| \mZtildestar \mN^\top \mSigmaobsd^{-1/2} \mUtrue \right\|_{\tti}
\le
C \sqrt{ \frac{ \log n }{ \SNR } }
\left( \sqrt{d}  
	+ \sqrt{n} \left\| \mSigmatrue^{1/2} \mSigmaobsd^{-1/2} - \mI \right\| 
\right) .
\end{equation*}
Applying this in turn to Equation~\eqref{eq:EUS:ZNterms:ZMN:step1},
\begin{equation} \label{eq:EUS:ZNterms:ZMN:done}
\begin{aligned}
\frac{1}{T}
\left\| \mSigmaobsd^{-1/2} \mZtrue \mM \mN^\top \mSigmaobsd^{-1/2} \mUtrue \mLambdatrue^{-1/2} \right\|_{\tti}
&\le
C \sqrt{ \frac{ \log n }{ \SNR \lambdatrue{d} } }
\left( \sqrt{ \frac{d}{T} }
	+ \left\| \mSigmatrue^{1/2} \mSigmaobsd^{-1/2} - \mI \right\| 
\right) \\
\le C\sqrt{ \frac{ \log n }{ \SNR \lambdatrue{d} } } ,
\end{aligned} \end{equation}
where the second inequality follows from Lemma~\ref{lem:sigmaratio}, Equations~\eqref{eq:assum:noisepower} and~\eqref{eq:assum:nT}, and our assumption that $\kappa = o(T)$.

By basic properties of the $(\tti)$-norm and Lemma~\ref{lem:sigmaratio},
\begin{equation} \label{eq:EUS:ZNterms:NMN:start}
\frac{1}{T} 
\left\| \mSigmaobsd^{-1/2} \mN \mM \mN^\top \mSigmaobsd^{-1/2} 
		\mUtrue \mLambdatrue^{-1/2} \right\|_{\tti}
\le
\frac{C}{T \sqrt{\lambdatrue{d}}}
\left\| \mSigmatrue^{-1/2} \mN \mM \mN^\top \mSigmaobsd^{-1/2} \mUtrue \right\|_{\tti}.
\end{equation}
Applying basic properties of the $(\tti)$-norm and using the triangle inequality,
\begin{equation} \label{eq:EUS:ZNterms:NMN:triangle} \begin{aligned}
\left\| \mSigmatrue^{-1/2} \mN \mM \mN^\top \mSigmaobsd^{-1/2} 
	\mUtrue \right\|_{\tti}
\!&\le\!
\left\| \mSigmatrue^{-1/2} \mN \mM\mN^\top \! \mSigmatrue^{-1/2} \!
	\left( \mSigmatrue^{1/2} \mSigmaobsd^{-1/2} \!-\! \mI \right) \! \mUtrue \right\|_{\tti} \\
&~~~~~~+ \!
\left\| \mSigmatrue^{-1/2} \mN \mM \mN^\top \mSigmatrue^{-1/2} \mUtrue \right\|_{\tti} .
\end{aligned} \end{equation}

For $i \in [n]$ and $k \in [d]$,
\begin{equation} \label{eq:NMNU:entry}
\left[ \mSigmatrue^{-1/2} \mN \mM \mN^\top \mSigmatrue^{-1/2} 
	\mUtrue \right]_{i,k} 
= \sum_{j=1}^n \frac{ \mN_i^\top \mM \mN_j \Utrue{j,k} }
                	{ \sigmatrue{i} \sigmatrue{j} }
= \frac{ \mN_i^\top \mM \mN_i \Utrue{i,k} }{ \sigmatrue{i}^2 }
+ \sum_{j\neq i} \frac{ \mN_i^\top \mM \mN_j \Utrue{j,k} }
                	{ \sigmatrue{i} \sigmatrue{j} } .
\end{equation}
Applying Cauchy-Schwarz followed by Lemma~\ref{lem:N:max},
\begin{equation} \label{eq:NMNU:entry:ii}
\left| \frac{ \mN_i^\top \mM \mN_i \Utrue{i,k} }{ \sigmatrue{i}^2 } \right|
\le
\frac{ \| \mN_i \|^2 |\Utrue{i,k}| }{ \sigmatrue{i}^2 }
\le \frac{ C \nu_i |\Utrue{i,k}| T \log T }{ \sigmatrue{i}^2 }
\le \frac{ C T \log T }{ \SNR } .
\end{equation}
Similarly, noting that the sum on the right-hand side of Equation~\eqref{eq:NMNU:entry} is a sum of independent sub-Gaussian random variables,
\begin{equation*}
\left| \sum_{j\neq i} \frac{ \mN_i^\top \mM \mN_j \Utrue{j,k} }
                	{ \sigmatrue{i} \sigmatrue{j} } \right|
\le 
\frac{ C \| \mN_i \| }{ \sigmatrue{i} }
	\sqrt{ \sum_{j \neq i} \frac{ \nu_j {\Utrue{j,k}}^2 }{ \sigmatrue{j}^2 } } 
\le
\frac{ C \| \mN_i \| }{ \sqrt{ \sigmatrue{i}^2 \SNR } } .
\end{equation*}
Again using Lemma~\ref{lem:N:max},
\begin{equation*}
\left| \sum_{j\neq i} \frac{ \mN_i^\top \mM \mN_j \Utrue{j,k} }
                	{ \sigmatrue{i} \sigmatrue{j} } \right|
\le \frac{ C \sqrt{\nu_i T \log T}}{ \sqrt{ \sigmatrue{i}^2 \SNR} }
\le \frac{ C \sqrt{T} \log T }{ \SNR } .
\end{equation*}
Applying this and Equation~\eqref{eq:NMNU:entry:ii} to Equation~\eqref{eq:NMNU:entry}, it holds with high probability that for all $i \in [n]$ and $k \in [d]$,
\begin{equation*}
\left| \left[ \mSigmatrue^{-1/2} \mN \mM \mN^\top \mSigmatrue^{-1/2} 
	\mUtrue \right]_{i,k} \right|
\le \frac{ C T \log T }{ \SNR } ,
\end{equation*}
from which
\begin{equation*}
\left\| \mSigmatrue^{-1/2} \mN \mM \mN^\top \mSigmatrue^{-1/2} 
        \mUtrue \right\|_{\tti}^2
\le C \max_{i \in [n]} \sum_{k=1}^d 
	\left[ \mSigmatrue^{-1/2} \mN \mM \mN^\top \mSigmatrue^{-1/2} 
        	\mUtrue \right]_{i,k}^2 
\le \frac{ C d T^2 \log^2 T }{ \SNR^2 } .
\end{equation*}
Taking square roots,
\begin{equation} \label{eq:NMN:tti:tri:term1:done}
\left\| \mSigmatrue^{-1/2} \mN \mM \mN^\top \mSigmatrue^{-1/2} 
        \mUtrue \right\|_{\tti}
\le \frac{ C \sqrt{d} T \log T }{ \SNR } .
\end{equation} 

Applying basic properties of the $(\tti)$-norm,
\begin{equation*} 
\left\| \mSigmatrue^{-1/2} \mN^\top
	\left( \mSigmatrue^{-1/2} \mSigmaobsd^{1/2} - \mI \right)
	\mUtrue \right\|_{\tti}
\le
\left\| \mSigmatrue^{-1/2} \mN^\top \right\|_{\tti}
\left\| \mSigmatrue^{-1/2} \mSigmaobsd^{1/2} - \mI \right\| .
\end{equation*}
Applying Lemma~\ref{lem:N:max},
\begin{equation*}
\left\| \mSigmatrue^{-1/2} \mN^\top
	\left( \mSigmatrue^{-1/2} \mSigmaobsd^{1/2} - \mI \right)
	\mUtrue \right\|_{\tti}
\le
C\sqrt{ \frac{ T \log T }{ \SNR } }
\left\| \mSigmatrue^{-1/2} \mSigmaobsd^{1/2} - \mI \right\| .
\end{equation*}
Applying this and Equation~\eqref{eq:NMN:tti:tri:term1:done} to Equation~\eqref{eq:EUS:ZNterms:NMN:triangle},
\begin{equation*}
\left\| \mSigmatrue^{-1/2} \mN \mM \mN^\top \mSigmaobsd^{-1/2} 
	\mUtrue \mLambdatrue^{-1/2} \right\|_{\tti}
\le
\sqrt{ \frac{ d T \log T }{ \SNR } }
\left(
1
+ 
\left\| \mSigmatrue^{-1/2} \mSigmaobsd^{1/2} - \mI \right\| \right).
\end{equation*}
Applying this in turn to Equation~\eqref{eq:EUS:ZNterms:NMN:start},
\begin{equation*} 
\frac{1}{T} 
\left\| \mSigmaobsd^{-1/2} \mN \mM \mN^\top \mSigmaobsd^{-1/2} 
		\mUtrue \mLambdatrue^{-1/2} \right\|_{\tti}
\le
C\sqrt{ \frac{ d\log T }{ \SNR \lambdatrue{d} } }
\left( 1 + 
	\left\| \mSigmatrue^{-1/2} \mSigmaobsd^{1/2} - \mI \right\| 
\right)
\end{equation*}
and Lemma~\ref{lem:sigmaratio}, along with Equations~\eqref{eq:assum:noisepower} and~\eqref{eq:assum:nT} and our assumption that $\kappa=o(T)$, yields
\begin{equation} \label{eq:EUS:ZNterms:NMN:done}
\frac{1}{T} 
\left\| \mSigmaobsd^{-1/2} \mN \mM \mN^\top \mSigmaobsd^{-1/2} 
		\mUtrue \mLambdatrue^{-1/2} \right\|_{\tti}
\le C\sqrt{ \frac{ d\log T }{ \SNR \lambdatrue{d} } } .
\end{equation}

Applying Equations~\eqref{eq:EUS:ZNterms:NMZ:done},
~\eqref{eq:EUS:ZNterms:ZMN:done}
and~\eqref{eq:EUS:ZNterms:NMN:done}
to Equation~\eqref{eq:EUS:ZNterms},
\begin{equation} \label{eq:EUS:ZNterms:done} \begin{aligned}
\frac{1}{T} \left\| \mSigmaobsd^{-1/2} \left( \mZobsd \mM \mZobsd^\top 
			- \mZtrue \mM \mZtrue^\top \right)
			\mSigmaobsd^{-1/2} \mUtrue \mLambdatrue^{-1/2} 
	\right\|_{\tti} 
&\le
C \sqrt{ \frac{ d \log T }{ \SNR \lambdatrue{d} } } .
\end{aligned} \end{equation}
Applying this in turn to Equation~\eqref{eq:bigexpand:term1:SigmaSplit},
\begin{equation} \label{eq:bigexpand:halfwayThere} \begin{aligned}
&\left\| \left( \mRobsd - \mRtrue \right) \mUtrue \mLambdatrue^{-1/2} \right\|_{\tti} \\
&~~\le
\frac{1}{T}\left\| \left( \! \mSigmaobsd^{-1/2} \mZtrue \mM \mZtrue^\top \!\mSigmaobsd^{-1/2} 
	- \mSigmatrue^{-1/2} \mZtrue \mM \mZtrue^\top \!\mSigmatrue^{-1/2} \! \right) 
	\! \mUtrue \mLambdatrue^{-1/2} \right\|_{\tti} 
+ C \sqrt{ \frac{ d \log T }{ \SNR \lambdatrue{d} } } .
\end{aligned} \end{equation}

For $i \in [n]$ and $k \in [d]$,
\begin{equation*} \begin{aligned}
\frac{1}{T} & \left[ 
		\left( \mSigmaobsd^{-1/2} \mZtrue \mM \mZtrue^\top \mSigmaobsd^{-1/2} 
		- \mSigmatrue^{-1/2} \mZtrue \mM \mZtrue^\top \mSigmatrue^{-1/2} 
		\right) \mUtrue \mLambdatrue^{-1/2} \right]_{i,k} \\
&= \sum_{j=1}^n \left( \frac{1}{\sigmaobsd{i} \sigmaobsd{j}} 
					- \frac{1}{\sigmatrue{i} \sigmatrue{j}} \right)
	\frac{ \vZtrue{i}^\top \mM \vZtrue{j} \Utrue{j,k} }
			{ T \sqrt{\lambdatrue{k}} } 
= \sum_{j=1}^n
	\left( \frac{\sigmatrue{i} \sigmatrue{j}}{\sigmaobsd{i} \sigmaobsd{j}} 
		- 1 \right) \frac{ \Rtrue_{i,j} \Utrue{j,k} }{ \sqrt{\lambdatrue{k}} } .
\end{aligned} \end{equation*}
Applying the Cauchy-Schwarz inequality,
\begin{equation*} \begin{aligned}
&\left[ \frac{1}{T} 
		\left( \mSigmaobsd^{-1/2} \mZtrue \mM \mZtrue^\top \mSigmaobsd^{-1/2} 
		- \mSigmatrue^{-1/2} \mZtrue \mM \mZtrue^\top \mSigmatrue^{-1/2} 
		\right) \mUtrue \mLambdatrue^{-1/2} \right]_{i,k}^2 \\
&\le \left[ \sum_{j=1}^n
	\left( \frac{\sigmatrue{i} \sigmatrue{j}}{\sigmaobsd{i} \sigmaobsd{j}} 
		- 1 \right)^2 \frac{ {\Utrue{j,k}}^2 }{ \lambdatrue{k} } \right]
	\left[ \sum_{j=1}^n {\Rtrue_{i,j}}^2 \right] 
\le \frac{ C }{\lambdatrue{k}}
		\left( \frac{ \log n }{ \SNR } 
				+ \sqrt{ \frac{ \log n }{ T \SNR } } \right)^2
		\sum_{j=1}^n {\Rtrue_{i,j}}^2 ,
\end{aligned} \end{equation*}
where the second inequality follows from Lemma~\ref{lem:sigmaprod:ratio}.

Summing over $k \in [d]$, taking a maximum over $i \in [n]$, and taking square roots,
\begin{equation*} \begin{aligned}
&\left\| \frac{1}{T} \left( \mSigmaobsd^{-1/2} \mZtrue \mM \mZtrue^\top \mSigmaobsd^{-1/2} 
        - \mSigmatrue^{-1/2} \mZtrue \mM \mZtrue^\top \mSigmatrue^{-1/2} 
        \right) \mUtrue \right\|_{\tti} \\
&~~~~~~\le C \sqrt{ \frac{ d }{ \lambdatrue{d} } }
	\left( \frac{ \log n }{ \SNR } + \sqrt{ \frac{ \log n }{ T \SNR } } \right) \| \mRtrue \|_{\tti} .
\end{aligned} \end{equation*}
Using Equation~\eqref{eq:assum:nT} and applying this to Equation~\eqref{eq:bigexpand:halfwayThere},
\begin{equation*}
\left\| \left( \mRobsd - \mRtrue \right) \mUtrue \mLambdatrue^{-1/2} \right\|_{\tti} 
\le 
\sqrt{ \frac{ d \log T }{ \SNR \lambdatrue{d} } }
\left[ 1
+ \left( \sqrt{\frac{ \log n }{ \SNR }} + \sqrt{ \frac{ 1 }{ T } } \right) \| \mRtrue \|_{\tti}
\right],
\end{equation*}
completing the proof.
\end{proof}

\section{Controlling eigenspaces} \label{apx:subspaces}

Here we collect results related to alignment of the leading eigenspace of $\mRobsd$ with the signal eigenspace of $\mRtrue$, to be used in the proof of Theorem~\ref{thm:XobsdXtrue:tti} in Appendix~\ref{apx:thm:tti}.
We draw from standard results in subspace geometry \citep[see, e.g.,][]{DavKah1970,Bhatia1997,CapTanPri2019,CaiZha2018}.
As elsewhere in this work, the main technical challenge (and hence the main departure from previous work) lies in the dependence structure in the error $\mRobsd - \mRtrue$.

\begin{lemma} \label{lem:sinTheta} 
Suppose that Assumption~\ref{assum:N} holds.
Then
\begin{equation} \label{eq:sinTheta:bound}
\min_{\mW \in \bbO_d} \left\| \mUobsd \mW - \mUtrue \right\|
\le \sqrt{2} \left\| \sin \mTheta \right\| 
\le
C \left( \frac{ \kappa \log n }{ \SNR }
        + \sqrt{ \frac{ \kappa \log n }{ \SNR \lambdatrue{d} } } \right) .
\end{equation}
\end{lemma}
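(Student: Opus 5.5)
The plan is to combine a standard Davis--Kahan $\sin\Theta$ argument with the spectral-norm control of $\mRobsd-\mRtrue$ from Lemma~\ref{lem:RobsdRtrue:spectral}. The first inequality in Equation~\eqref{eq:sinTheta:bound} is deterministic and standard. Let $\mUtrue^\top\mUobsd = \mW_1 \cos\mTheta\, \mW_2^\top$ be an SVD, and take $\mW = \mW_2\mW_1^\top$. Expanding $\|\mUobsd\mW-\mUtrue\|^2$ and using $\|\mI-\cos\mTheta\|\le\|\sin^2\mTheta\|\le\|\sin\mTheta\|$ gives the bound $\sqrt{2}\|\sin\mTheta\|$ (see, e.g., \cite{CapTanPri2019,CaiZha2018}). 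I would cite this rather than reprove it.

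For the second inequality, I would apply the Davis--Kahan theorem in the form $\|\sin\mTheta\| \le C\|\mRobsd-\mRtrue\|/\delta$. Here $\delta$ is the gap between $\lambdatrue{d}$ and the $(d+1)$-th eigenvalue of $\mRobsd$. Since $\mRtrue$ has rank $d$ and is positive semidefinite, Weyl's inequality gives $|\lambdaobsd{d+1}|\le\|\mRobsd-\mRtrue\|$. So it suffices to show $\|\mRobsd-\mRtrue\| = \op{\lambdatrue{d}}$, which yields $\delta\ge\lambdatrue{d}/2$ with high probability.

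Lemma~\ref{lem:RobsdRtrue:spectral} gives $\|\mRobsd-\mRtrue\|\le C(\lambdatrue{1}\log n/\SNR + \sqrt{\lambdatrue{1}\log n/\SNR})$. Dividing by $\lambdatrue{d}$ gives exactly
\begin{equation*}
\frac{\|\mRobsd-\mRtrue\|}{\lambdatrue{d}} \le C\left(\frac{\kappa\log n}{\SNR} + \sqrt{\frac{\kappa\log n}{\SNR\,\lambdatrue{d}}}\right),
\end{equation*}
because $\sqrt{\lambdatrue{1}}/\lambdatrue{d} = \sqrt{\kappa/\lambdatrue{d}}$. This is the claimed rate. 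The same display shows the ratio is $o(1)$: the first term vanishes by Equation~\eqref{eq:assum:noisepower} since $n\le T$ up to constants, and the second does too because $\lambdatrue{d}=\Omegap{1}$. Hence the gap condition holds with high probability, and Davis--Kahan closes the argument.

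The main obstacle is verifying the gap condition, i.e., that the trailing eigenvalues of $\mRobsd$ stay below $\lambdatrue{d}/2$. I would handle this with Weyl's inequality as above, using the fact that $\mRtrue$ has exactly $d$ nonzero eigenvalues, all positive. Because the statement as written invokes only Assumption~\ref{assum:N}, I would add a remark that the $o(1)$ conclusion and the gap step require Assumption~\ref{assum:Rspec}. Absent that assumption, the bound holds trivially whenever the right-hand side exceeds a constant, since $\|\sin\mTheta\|\le1$.
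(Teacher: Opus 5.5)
Your proposal is correct and follows the paper's proof. You cite the first inequality from standard subspace geometry, and you obtain the second from Davis--Kahan with a gap of order $\lambdatrue{d}$ together with Lemma~\ref{lem:RobsdRtrue:spectral}; your Weyl-based check of the eigengap, and your remark that this step (like Lemma~\ref{lem:RobsdRtrue:spectral} itself, via Lemma~\ref{lem:sigmaratio}) really uses Assumption~\ref{assum:Rspec}, fill in a detail the paper leaves implicit.

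One small slip: to get $\sqrt{2}\|\sin\mTheta\|$ you need $\|\mI-\cos\mTheta\|\le\|\sin^2\mTheta\|=\|\sin\mTheta\|^2$, since stopping at $\le\|\sin\mTheta\|$ only yields $\sqrt{2\|\sin\mTheta\|}$.
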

\begin{proof}
Basic results in subspace geometry \citep{CaiZha2018,CapTanPri2019} yield the first inequality in Equation~\eqref{eq:sinTheta:bound}.
To establish the second, the Davis-Kahan theorem \citep[][Theorem VII.3.2]{Bhatia1997} implies
\begin{equation*} 
\left\| \sin \mTheta \right\|
\le \frac{ C \left\| \mRobsd - \mRtrue \right\| }{ \lambdatrue{d} } .
\end{equation*}
Applying Lemma~\ref{lem:RobsdRtrue:spectral} completes the proof.
\end{proof}

In order to align the estimated subspace $\mUobsd$ to the signal subspace $\mUtrue$, following \cite{SpectralDS}, define 
\begin{equation} \label{eq:def:H}
\mH = \mUobsd^\top \mUtrue \in \bbO_d
\end{equation}
and the related Procrustes alignment matrix
\begin{equation} \label{eq:def:Q} 
\mQ 
= \sgn \left( \mH \right)
= \sgn \left( \mUobsd^\top \mUtrue \right) ,
\end{equation}
where, for $\mH \in \R^{d \times d}$ with SVD $\mH = \mV_1 \mD \mV_2^\top$, $\sgn( \mB ) = \mV_1 \mV_2^\top$. 
The following lemma largely follows Lemma 4.15 in \cite{SpectralDS}.
The main change is that we must account for the different error structure in $\mRobsd-\mRtrue$ compared to the independent edges considered in that work.

\begin{lemma} \label{lem:SpectralDS:lem4.15:H}
Suppose that Assumptions~\ref{assum:N} and~\ref{assum:Rspec} hold.
Let $\mH$ and $\mQ$ be as in Equations~\eqref{eq:def:H} and~\eqref{eq:def:Q}, respectively.
Then
\begin{equation} \label{eq:Hbound:1}
\left\| \mH - \mQ \right\|
\le
\frac{ C \kappa \log n }{ \SNR }
\left[ \frac{ \kappa \log n }{ \SNR } + \frac{ 1 }{ \lambdatrue{d} } \right]
\end{equation}
and
\begin{equation} \label{eq:Hbound:2}
\left\| \mH^{-1} \right\| 
= \Op{ 1 } .
\end{equation}
\end{lemma}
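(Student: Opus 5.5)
This is the standard principal-angle argument. Write the SVD $\mH = \mV_1 \mD \mV_2^\top$, where $\mD = \diag(\cos\theta_1,\dots,\cos\theta_d)$ collects the cosines of the principal angles between the column spaces of $\mUobsd$ and $\mUtrue$; both have orthonormal columns, so every singular value of $\mH$ lies in $[0,1]$. Since $\mQ = \mV_1\mV_2^\top$, unitary invariance gives
\begin{equation*}
\left\| \mH - \mQ \right\| = \left\| \mD - \mI \right\| = \max_{k \in [d]} \left( 1 - \cos\theta_k \right) \le \max_{k\in[d]} \left( 1 - \cos^2\theta_k \right) = \left\| \sin\mTheta \right\|^2 .
\end{equation*}
So \eqref{eq:Hbound:1} reduces to squaring the bound on $\|\sin\mTheta\|$ from Lemma~\ref{lem:sinTheta}, which is the same reduction as in \cite{SpectralDS}.

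Squaring that bound and using $(a+b)^2 \le 2a^2+2b^2$ gives
\begin{equation*}
\left\| \sin\mTheta \right\|^2 \le C\left( \frac{\kappa^2 \log^2 n}{\SNR^2} + \frac{\kappa \log n}{\SNR \lambdatrue{d}} \right) = \frac{C\kappa\log n}{\SNR}\left[ \frac{\kappa\log n}{\SNR} + \frac{1}{\lambdatrue{d}} \right],
\end{equation*}
which is exactly the right-hand side of \eqref{eq:Hbound:1}. The only care needed is that Lemma~\ref{lem:sinTheta} holds with high probability, so the resulting bound also holds on that event.

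For \eqref{eq:Hbound:2}, note that $\|\mH^{-1}\| = 1/\sigma_{\min}(\mH) = 1/\min_k \cos\theta_k$, and that $\min_k\cos^2\theta_k = 1 - \|\sin\mTheta\|^2$. Assumption~\ref{assum:Rspec} gives $\kappa\log n/\SNR \le \kappa\log T/\SNR = o(1)$, using $n = O(T)$. Together with $\lambdatrue{d} = \Omegap{1}$, this shows the bracketed term above is $\op{1}$, so $\|\sin\mTheta\| = \op{1}$. Hence, with high probability, $\min_k\cos\theta_k \ge 1/2$, $\mH$ is invertible, and $\|\mH^{-1}\| \le 2$, which is \eqref{eq:Hbound:2}.

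The main obstacle is not in this lemma. All of the dependence between edges of $\mRobsd - \mRtrue$ is already absorbed by Lemma~\ref{lem:RobsdRtrue:spectral}, through Lemma~\ref{lem:sinTheta}. What remains is to check that the Davis--Kahan step in Lemma~\ref{lem:sinTheta} legitimately uses the gap $\lambdatrue{d}$, rather than a gap involving $\lambdaobsd{d+1}$. This holds because $\|\mRobsd - \mRtrue\| = \op{\lambdatrue{d}}$, which follows from Lemma~\ref{lem:RobsdRtrue:spectral} and \eqref{eq:assum:noisepower}: the term $\lambdatrue{1}\log n/\SNR$ is $\lambdatrue{d}\cdot\kappa\log n/\SNR$, and the term $\sqrt{\lambdatrue{1}\log n/\SNR}$ is $\lambdatrue{d}\sqrt{\kappa\log n/(\SNR\lambdatrue{d})}$, and both are $\op{\lambdatrue{d}}$. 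Weyl's inequality then keeps the trailing eigenvalues of $\mRobsd$ at $\op{\lambdatrue{d}}$, so the spectral gap is preserved.
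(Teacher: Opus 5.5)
Your proof is correct and follows essentially the same route as the paper: you take the SVD of $\mH$, bound $\|\mH-\mQ\| \le \|\sin\mTheta\|^2$, and square the bound from Lemma~\ref{lem:sinTheta}. The paper gets the lower bound on $\sigma_d(\mH)$ from Weyl's inequality against $\mQ$, while you use $\min_k\cos^2\theta_k = 1-\|\sin\mTheta\|^2$ directly; the two arguments are equivalent, and your extra check on the Davis--Kahan gap is consistent but not needed for this lemma.
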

\begin{proof}
Recall that by definition, $\mH = \mUobsd^\top \mUtrue$.
Let $\cos \mTheta \in \R^{d \times d}$ be the diagonal matrix of the cosines of the principal angles between $\mUobsd$ and $\mUtrue$ so that the SVD of $\mH$ is given by $\mH = \mV_1 \cos \mTheta ~\mV_2^\top$ \citep{Bhatia1997}.
By definition,
\begin{equation*}
\mQ = \sgn( \mH ) = \mV_1 \mV_2^\top ,
\end{equation*}
and it follows that
\begin{equation*}
\left\| \mH - \mQ \right\|
\le \left\| \cos \mTheta - \mI \right\|
\le \left\| \cos^2 \mTheta - \mI \right\|
= \left\| \sin^2 \mTheta \right\| .
\end{equation*}
Applying Lemma~\ref{lem:sinTheta} yields Equation~\eqref{eq:Hbound:1}.

By Weyl's inequality,
\begin{equation*}
\sigma_d( \mH )
\ge \sigma_d\left( \mQ \right) - \left\| \mH - \mQ \right\|
= 1 - \left\| \mH - \mQ \right\|,
\end{equation*}
where we have used the fact that $\mQ = \mV_1 \mV_2^\top$ has $d$ singular values all equal to $1$.
Applying Equation~\eqref{eq:Hbound:1} along with our growth assumptions in Equations~\eqref{eq:assum:noisepower} and~\eqref{eq:assum:lambdad:LB} yields Equation~\eqref{eq:Hbound:2}.
\end{proof}

The following two lemmas serve as analogues of the bounds in Lemma 17 of \cite{LyzTanAthParPri2017}, adapted to the correlation network setting.

\begin{lemma} \label{lem:Qinterchange} 
Suppose that Assumptions~\ref{assum:N} and~\ref{assum:Rspec} hold.
Then, with $\mQ \in \bbO_d$ as in Equation~\eqref{eq:def:Q}.
\begin{equation*}
\left\| \mLambdaobsd \mQ  - \mQ \mLambdatrue \right\|_F
\le
C \frac{ \sqrt{d} \kappa^2  \log n }{ \SNR }
\left( \frac{ \lambdatrue{1} \log n }{ \SNR } + 1 \right)
+ \left\| \mRtrue \right\|_F
\left( \frac{ \log T }{ \SNR } + \sqrt{ \frac{ \log T }{ T \SNR } } \right) .
\end{equation*}
\end{lemma}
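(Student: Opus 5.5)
The plan is to follow the standard argument behind Lemma 17 of \cite{LyzTanAthParPri2017}, with $\mH = \mUobsd^\top \mUtrue$ as the intermediate object. By the triangle inequality,
\begin{equation*}
\left\| \mLambdaobsd \mQ - \mQ \mLambdatrue \right\|_F
\le \left\| \mLambdaobsd ( \mQ - \mH ) \right\|_F
+ \left\| \mLambdaobsd \mH - \mH \mLambdatrue \right\|_F
+ \left\| ( \mH - \mQ ) \mLambdatrue \right\|_F ,
\end{equation*}
so the proof reduces to bounding the two Procrustes-error terms and the commutator-like middle term.

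For the middle term, use the eigen-equations $\mUobsd^\top \mRobsd = \mLambdaobsd \mUobsd^\top$ and $\mRtrue \mUtrue = \mUtrue \mLambdatrue$. These give
\begin{equation*}
\mLambdaobsd \mH - \mH \mLambdatrue = \mUobsd^\top ( \mRobsd - \mRtrue ) \mUtrue .
\end{equation*}
Next, split $\mUobsd^\top = \mUobsd^\top \mUtrue \mUtrue^\top + \mUobsd^\top ( \mI - \mUtrue \mUtrue^\top )$.

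\begin{itemize}
\item The first piece is bounded by $\| \mUtrue^\top ( \mRobsd - \mRtrue ) \mUtrue \|_F$, using $\| \mH \| \le 1$. Lemma~\ref{lem:UEU:frobenius} then gives the $\| \mRtrue \|_F ( \log T/\SNR + \sqrt{\log T/(T\SNR)} )$ contribution. Note this needs Lemma~\ref{lem:UEU:frobenius}'s factor $d$ to be absorbed, or $\|\mH\|$ used more carefully.
\item The second piece has norm at most $\sqrt{d}\, \| \sin \mTheta \| \, \| \mRobsd - \mRtrue \|$. Bound it with Lemma~\ref{lem:sinTheta} and Lemma~\ref{lem:RobsdRtrue:spectral}, which gives a product of order
\begin{equation*}
\sqrt{d} \left( \frac{\kappa \log n}{\SNR} + \sqrt{\frac{\kappa \log n}{\SNR \lambdatrue{d}}} \right) \left( \frac{\lambdatrue{1} \log n}{\SNR} + \sqrt{\frac{\lambdatrue{1} \log n}{\SNR}} \right) .
\end{equation*}
\end{itemize}

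For the two outer terms, use $\| \mLambdaobsd \| \le \lambdatrue{1} + \| \mRobsd - \mRtrue \| \le C \lambdatrue{1}$ (Weyl plus Lemma~\ref{lem:RobsdRtrue:spectral} and Assumption~\ref{assum:Rspec}) and $\| \mLambdatrue \| = \lambdatrue{1}$. Each outer term is then at most $C\sqrt{d}\, \lambdatrue{1} \| \mH - \mQ \|$. By Equation~\eqref{eq:Hbound:1} of Lemma~\ref{lem:SpectralDS:lem4.15:H}, this is at most
\begin{equation*}
C \sqrt{d}\, \frac{\kappa \log n}{\SNR} \left( \frac{\kappa \lambdatrue{1} \log n}{\SNR} + \kappa \right) ,
\end{equation*}
using $\lambdatrue{1}/\lambdatrue{d} = \kappa$. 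This is exactly of the form
\begin{equation*}
\frac{\sqrt{d} \kappa^2 \log n}{\SNR} \left( \frac{\lambdatrue{1} \log n}{\SNR} + 1 \right) .
\end{equation*}

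The main obstacle is the bookkeeping in the second piece of the middle term: showing it is dominated by the stated $\kappa^2$ expression. Expanding the product, the cross terms should be absorbed as follows.

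\begin{itemize}
\item The mixed terms are handled with $\lambdatrue{d} = \Omegap{1}$ and $\kappa \ge 1$. For example,
\begin{equation*}
\sqrt{\frac{\kappa \log n}{\SNR \lambdatrue{d}}} \cdot \frac{\lambdatrue{1} \log n}{\SNR} \le \frac{\kappa \lambdatrue{1} (\log n)^{3/2}}{\SNR^{3/2}} \cdot \sqrt{\frac{\kappa}{\lambdatrue{1}}} ,
\end{equation*}
which is at most $\kappa^2 \lambdatrue{1} (\log n)^2/\SNR^2$ up to constants, since $\kappa\log n/\SNR = o(1)$.
\item The term $\sqrt{\kappa \log n/(\SNR\lambdatrue{d})}\cdot\sqrt{\lambdatrue{1}\log n/\SNR} = \kappa \log n/\SNR$ is of the ``$+1$'' type.
\end{itemize}

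If any term resists this absorption, I would fall back on handling the second piece more finely. Since $\mUtrue^\top(\mI - \mUtrue\mUtrue^\top) = \mZero$, the quantity $\mUobsd^\top(\mI-\mUtrue\mUtrue^\top)$ equals $(\mUobsd - \mUtrue \mW)^\top(\mI-\mUtrue\mUtrue^\top)$ for the optimal $\mW$, so the $\sin\mTheta$ bound enters directly.
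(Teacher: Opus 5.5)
Your proposal matches the paper's proof: the same three-term decomposition through $\mH = \mUobsd^\top \mUtrue$, the same split of $\mUobsd^\top(\mRobsd - \mRtrue)\mUtrue$ into $\mH\mUtrue^\top(\mRobsd-\mRtrue)\mUtrue$ plus a $\sin\mTheta$ piece, and the same supporting lemmas. The factor $d$ from Lemma~\ref{lem:UEU:frobenius} that you flag is also dropped without comment in the paper, and treating $\|\mH\|$ more carefully cannot remove it, since it comes from the Frobenius norm of the $d \times d$ matrix $\mUtrue^\top(\mRobsd-\mRtrue)\mUtrue$.

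One slip: your reason for absorbing the mixed term is backwards. The condition $\kappa\log n/\SNR = o(1)$ does not give $\kappa^{3/2}\sqrt{\lambdatrue{1}}\,(\log n/\SNR)^{3/2} \lesssim \kappa^2 \lambdatrue{1} (\log n/\SNR)^2$; that inequality fails whenever $\kappa\lambdatrue{1}\log n/\SNR \to 0$. The whole product is still dominated by the target if you factor it as the paper does, $\kappa \frac{\log n}{\SNR}\bigl(\sqrt{\lambdatrue{1}\log n/\SNR} + 1\bigr)^2 \le C\kappa\frac{\log n}{\SNR}\bigl(\frac{\lambdatrue{1}\log n}{\SNR} + 1\bigr)$.
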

\begin{proof}
Adding and subtracting appropriate quantities
\begin{equation*} \begin{aligned}
\mLambdaobsd \mQ 
&= \mLambdaobsd \left( \mQ - \mH \right) + \mLambdaobsd \mH 
= \mLambdaobsd \left( \mQ - \mH \right) + \mUobsd^\top \mRobsd \mUtrue,
\end{aligned} \end{equation*}
where the second equality follows from the definition of $\mH$.
Adding and subtracting appropriate quantities again,
\begin{equation} \label{eq:Qswap:pause} \begin{aligned}
\mLambdaobsd \mQ 
&= \mLambdaobsd \left( \mQ - \mH \right) 
	+ \mUobsd^\top \left( \mRobsd - \mRtrue \right) \mUtrue
	+ \mUobsd^\top \mRtrue \mUtrue \\
&= \mLambdaobsd \left( \mQ - \mH \right) 
	+ \left( \mUobsd - \mUtrue\mUtrue^\top \mUobsd \right)^\top 
		\left( \mRobsd - \mRtrue \right) \mUtrue \\
	&~~~+ \mUobsd^\top \mUtrue\mUtrue^\top 
		\left( \mRobsd - \mRtrue \right) \mUtrue
	+ \mUobsd^\top \mUtrue \mLambdatrue ,
\end{aligned} \end{equation}
where we have used the fact that $\mRtrue \mUtrue = \mUtrue \mLambdatrue$.
Recalling $\mH = \mUobsd^\top \mUtrue$ and writing
\begin{equation*}
\mUobsd^\top \mUtrue \mLambdatrue
= \left( \mH - \mQ \right) \mLambdatrue + \mQ \mLambdatrue,
\end{equation*}
rearranging Equation~\eqref{eq:Qswap:pause} yields
\begin{equation*} \begin{aligned}
\mLambdaobsd \mQ - \mQ \mLambdatrue
&= \mLambdaobsd \left( \mQ - \mH \right) 
	+ \left( \mUobsd - \mUtrue\mUtrue^\top \mUobsd \right)^\top 
		\left( \mRobsd - \mRtrue \right) \mUtrue \\
	&~~~+ \mUobsd^\top \mUtrue\mUtrue^\top 
		\left( \mRobsd - \mRtrue \right) \mUtrue
	+ \left( \mH - \mQ \right) \mLambdatrue .
\end{aligned} \end{equation*}
Applying the triangle inequality and using basic properties of the norm,
\begin{equation*} \begin{aligned}
\left\| \mLambdaobsd \mQ - \mQ \mLambdatrue \right\|_F
&\le \left\| \mLambdaobsd \right\|_F \left\| \mQ - \mH \right\|
	+ \left\| \left( \mUobsd - \mUtrue\mUtrue^\top \mUobsd \right)^\top 
		\left( \mRobsd - \mRtrue \right) \mUtrue \right\|_F \\
	&~~~+ \left\| \mH \right\|
		\left\| \mUtrue^\top \left( \mRobsd - \mRtrue \right) 
			\mUtrue \right\|_F
	+ \left\| \mH - \mQ \right\| \left\| \mLambdatrue \right\|_F .
\end{aligned} \end{equation*} 
Applying Lemma~\ref{lem:RobsdRtrue:spectral} and the growth bounds in Assumption~\ref{assum:Rspec},
\begin{equation*} \begin{aligned}
\left\| \mLambdaobsd \mQ - \mQ \mLambdatrue \right\|_F
&\le C \left\| \mLambdatrue \right\|_F \left\| \mQ - \mH \right\|
	+ \left\| \left( \mUobsd - \mUtrue\mUtrue^\top \mUobsd \right)^\top 
		\left( \mRobsd - \mRtrue \right) \mUtrue \right\|_F \\
&~~~+ \left\| \mH \right\|
	\left\| \mUtrue^\top \left( \mRobsd - \mRtrue \right) \mUtrue \right\|_F.
\end{aligned} \end{equation*}
Applying Lemmas~\ref{lem:UEU:frobenius} and~\ref{lem:SpectralDS:lem4.15:H},
\begin{equation*} \begin{aligned}
\left\| \mLambdaobsd \mQ - \mQ \mLambdatrue \right\|_F
&\le \left\| \left( \mUobsd - \mUtrue\mUtrue^\top \mUobsd \right)^\top 
		\left( \mRobsd - \mRtrue \right) \mUtrue \right\|_F \\
&~~~+ C \left\| \mLambdaobsd \right\|_F
\left[ \left( \frac{ \kappa \log n }{ \SNR } \right)^2 
	+ \frac{ \kappa \log n }{ \SNR \lambdatrue{d} } \right]
+ C \left\| \mRtrue \right\|_F
	\left( \frac{ \log T }{ \SNR } 
		+ \sqrt{ \frac{ \log T }{ T \SNR } } \right) .
\end{aligned} \end{equation*}
Again applying Lemma~\ref{lem:RobsdRtrue:spectral} and the growth bounds in Assumption~\ref{assum:Rspec},
\begin{equation} \label{eq:Qinter:checkpt} \begin{aligned}
\left\| \mLambdaobsd \mQ - \mQ \mLambdatrue \right\|_F
&\le \left\| \left( \mUobsd - \mUtrue\mUtrue^\top \mUobsd \right)^\top 
		\left( \mRobsd - \mRtrue \right) \mUtrue \right\|_F \\
	&~~~+ C \sqrt{d} \lambdatrue{1}
	\left[ \left( \frac{ \kappa \log n }{ \SNR } \right)^2 
        + \frac{ \kappa \log n }{ \SNR \lambdatrue{d} } \right]
	+ C \left\| \mRtrue \right\|_F
    \left( \frac{ \log T }{ \SNR } + \sqrt{ \frac{ \log T }{ T \SNR } } \right) .
\end{aligned} \end{equation}

Applying basic properties of the Frobenius norm followed by Lemmas~\ref{lem:RobsdRtrue:spectral}
\begin{equation*} \begin{aligned}
\left\| \left( \mUobsd - \mUtrue\mUtrue^\top \mUobsd \right)^\top 
        \left( \mRobsd - \mRtrue \right) \mUtrue \right\|_F
&\le \left\| \mUobsd - \mUtrue\mUtrue^\top \mUobsd \right\| \left\| \mRobsd - \mRtrue \right\|
	\left\| \mUtrue \right\|_F \\
&\le
C \sqrt{d} \left( \frac{ \lambdatrue{1} \log n }{ \SNR }
				+ \sqrt{ \frac{\lambdatrue{1} \log n}{\SNR} } \right)
	\left\| \mUobsd - \mUtrue\mUtrue^\top \mUobsd \right\| 
\end{aligned} \end{equation*}
Noting that the singular values of $\mUobsd - \mUtrue\mUtrue^\top \mUobsd$ are precisely the sines of the principal angles between $\mUobsd$ and $\mUtrue$, Lemma~\ref{lem:sinTheta} implies that
\begin{equation*} \begin{aligned}
\left\| \left( \mUobsd - \mUtrue\mUtrue^\top \mUobsd \right)^{\! \top} \!\!
        \left( \mRobsd - \mRtrue \right) \mUtrue \right\|_F
&\le
C \sqrt{d} \left( \frac{ \lambdatrue{1} \log n }{ \SNR }
				+ \sqrt{ \frac{\lambdatrue{1} \log n}{\SNR} } \right)
	 \left( \frac{ \kappa \log n }{ \SNR }
        + \sqrt{ \frac{ \kappa \log n }{ \SNR \lambdatrue{d} } } \right) .
\end{aligned} \end{equation*}

Applying this to Equation~\eqref{eq:Qinter:checkpt},
\begin{equation*} \begin{aligned}
\left\| \mLambdaobsd \mQ - \mQ \mLambdatrue \right\|_F
&\le 
C \sqrt{d} \left( \frac{ \lambdatrue{1} \log n }{ \SNR }
				+ \sqrt{ \frac{\lambdatrue{1} \log n}{\SNR} } \right)
	 \left( \frac{ \kappa \log n }{ \SNR }
        + \sqrt{ \frac{ \kappa \log n }{ \SNR \lambdatrue{d} } } \right) \\
&~~~+ C \sqrt{d} \lambdatrue{1}
	\left[ \left( \frac{ \kappa \log n }{ \SNR } \right)^2 
        + \frac{ \kappa \log n }{ \SNR \lambdatrue{d} } \right]
	+ C \left\| \mRtrue \right\|_F
    \left( \frac{ \log T }{ \SNR } + \sqrt{ \frac{ \log T }{ T \SNR } } \right) .
\end{aligned} \end{equation*}
Recalling that $\kappa = \lambdatrue{1}/\lambdatrue{d}$ and rearranging, using the fact that
\begin{equation*}
\left( \sqrt{ \frac{ \lambdatrue{1} \log n }{ \SNR } } + 1 \right)^2
\le C\left( 1 + \frac{ \lambdatrue{1} \log n }{ \SNR } \right),
\end{equation*}
we have
\begin{equation*} \begin{aligned}
\left\| \mLambdaobsd \mQ - \mQ \mLambdatrue \right\|_F
&\le 
\frac{ C \sqrt{d} \kappa^2  \log n }{ \SNR }
\left[ \frac{ \lambdatrue{1} \log n }
			{ \SNR } + 1 \right]
+ C \left\| \mRtrue \right\|_F
\left( \frac{ \log T }{ \SNR } + \sqrt{ \frac{ \log T }{ T \SNR } } \right) ,
\end{aligned} \end{equation*}
as we set out to show.
\end{proof} 

\begin{lemma} \label{lem:Qinter:sqrt} 
Suppose that Assumptions~\ref{assum:N} and~\ref{assum:Rspec} hold.
Then, with $\mQ$ as in Equation~\eqref{eq:def:Q},
\begin{equation*}
\left\| \mLambdaobsd^{-1/2} \mQ  - \mQ \mLambdatrue^{-1/2} \right\|_F
\le
C \kappa \sqrt{ \frac{ d \log T }{ \SNR \lambdatrue{d} } }
\left[
\kappa \sqrt{ \frac{ \log n }{ \SNR } }\!
\left( \frac{ \kappa \log n }{ \SNR } + \frac{1}{\lambdatrue{d}} \! \right)
+ \!
\left( \sqrt{ \frac{ \log T }{ \SNR } } + \! \sqrt{ \frac{ 1 }{ T } } \right)
\right] .
\end{equation*}
\end{lemma}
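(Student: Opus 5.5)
The plan is to reduce the square-root version to the unrooted bound of Lemma~\ref{lem:Qinterchange}, working entrywise in the diagonal structure. Write $\mLambdaobsd^{-1/2}\mQ - \mQ\mLambdatrue^{-1/2}$ entrywise: for $k,\ell\in[d]$,
\begin{equation*}
\left[ \mLambdaobsd^{-1/2}\mQ - \mQ\mLambdatrue^{-1/2} \right]_{k\ell}
= Q_{k\ell}\left( \lambdaobsd{k}^{-1/2} - \lambdatrue{\ell}^{-1/2} \right)
= \frac{ Q_{k\ell}\left( \lambdatrue{\ell} - \lambdaobsd{k} \right) }
	{ \sqrt{\lambdaobsd{k}\lambdatrue{\ell}}\left( \sqrt{\lambdaobsd{k}} + \sqrt{\lambdatrue{\ell}} \right) } ,
\end{equation*}
and note that $Q_{k\ell}(\lambdaobsd{k}-\lambdatrue{\ell}) = [\mLambdaobsd\mQ - \mQ\mLambdatrue]_{k\ell}$.

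To control the denominator, I will use Weyl's inequality with Lemma~\ref{lem:RobsdRtrue:spectral} and Assumption~\ref{assum:Rspec}. These give $|\lambdaobsd{k}-\lambdatrue{k}| \le \|\mRobsd-\mRtrue\| = o(\lambdatrue{d})$, since $\kappa\log n/\SNR = o(1)$ and $\lambdatrue{d}=\Omegap{1}$. Hence all $\lambdaobsd{k} \ge \lambdatrue{d}/2$ with high probability, and each denominator is at least $c\,\lambdatrue{d}^{3/2}$. It follows that
\begin{equation*}
\left\| \mLambdaobsd^{-1/2}\mQ - \mQ\mLambdatrue^{-1/2} \right\|_F
\le \frac{C}{\lambdatrue{d}^{3/2}} \left\| \mLambdaobsd\mQ - \mQ\mLambdatrue \right\|_F ,
\end{equation*}
and I then plug in Lemma~\ref{lem:Qinterchange}.

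The remaining work is bookkeeping to match the stated form. The first term of Lemma~\ref{lem:Qinterchange} divided by $\lambdatrue{d}^{3/2}$ is
\begin{equation*}
\frac{\sqrt d\,\kappa^2\log n}{\SNR\,\lambdatrue{d}^{3/2}}\left(\frac{\lambdatrue{1}\log n}{\SNR}+1\right)
= \kappa\sqrt{\frac{d\log n}{\SNR\lambdatrue{d}}}\cdot\kappa\sqrt{\frac{\log n}{\SNR}}\left(\frac{\kappa\log n}{\SNR}+\frac{1}{\lambdatrue{d}}\right),
\end{equation*}
using $\lambdatrue{1}/\lambdatrue{d}=\kappa$. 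This matches the first bracketed piece, with $\log n\le C\log T$ by $n=O(T)$. For the second term, I will bound $\|\mRtrue\|_F\le\sqrt d\,\lambdatrue{1}$. Dividing by $\lambdatrue{d}^{3/2}$ then gives
\begin{equation*}
\sqrt d\,\frac{\kappa}{\sqrt{\lambdatrue{d}}}\left(\frac{\log T}{\SNR}+\sqrt{\frac{\log T}{T\SNR}}\right)
= \kappa\sqrt{\frac{d\log T}{\SNR\lambdatrue{d}}}\left(\sqrt{\frac{\log T}{\SNR}}+\sqrt{\frac1T}\right),
\end{equation*}
which is exactly the second piece.

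The main obstacle is securing the uniform lower bound on $\lambdaobsd{d}$, since that is what keeps the denominator comparable to $\lambdatrue{d}^{3/2}$. This requires $\sqrt{\lambdatrue{1}\log n/\SNR} = o(\lambdatrue{d})$. That follows because $\kappa\log n/\SNR=o(1)$ and $\lambdatrue{d}\ge c$ together give $\lambdatrue{1}\log n/\SNR = o(\lambdatrue{d}) = o(\lambdatrue{d}^2)$. Everything else is direct substitution.
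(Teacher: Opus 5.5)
Your proposal is correct and follows essentially the same route as the paper: an entrywise expansion that reduces the Frobenius norm to $\|\mLambdaobsd\mQ - \mQ\mLambdatrue\|_F/\lambdatrue{d}^{3/2}$, with $\lambdaobsd{d}$ bounded below via Weyl's inequality and Lemma~\ref{lem:RobsdRtrue:spectral}, followed by an application of Lemma~\ref{lem:Qinterchange}. Your explicit bookkeeping spells out the ``simplifying'' step that the paper leaves implicit, namely $\|\mRtrue\|_F \le \sqrt{d}\,\lambdatrue{1}$ and $\log n \le C\log T$, and it checks out.
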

\begin{proof}
Expanding the Frobenius norm,
\begin{equation*} \begin{aligned}
\left\| \mLambdaobsd^{-1/2} \mQ  - \mQ \mLambdatrue^{-1/2} \right\|_F^2
&= \sum_{k=1}^d \sum_{\ell=1}^d
	\left( \frac{Q_{k,\ell}}{ \sqrt{\lambdaobsd{k}} }
		- \frac{Q_{k,\ell}}{ \sqrt{\lambdatrue{\ell}} } \right)^2
= \sum_{k=1}^d \sum_{\ell=1}^d
	Q_{k,\ell}^2
	\left( \frac{ \sqrt{\lambdatrue{\ell}} - \sqrt{\lambdaobsd{k}} }
		{ \sqrt{\lambdaobsd{k}} \sqrt{\lambdatrue{\ell}} }
	\right)^2 \\
&= \sum_{k=1}^d \sum_{\ell=1}^d 
	\frac{ 
	\left( Q_{k,\ell} \lambdatrue{\ell} -  Q_{k,\ell} \lambdaobsd{k} 
	\right)^2 }
	{ \lambdaobsd{k} \lambdatrue{\ell}
	\left( \sqrt{\lambdatrue{\ell}}+\sqrt{\lambdaobsd{k}} \right)^2 } .
\end{aligned} \end{equation*}
It follows that
\begin{equation*} 
\left\| \mLambdaobsd^{-1/2} \mQ  - \mQ \mLambdatrue^{-1/2} \right\|_F^2
\le \frac{ \left\| \mLambdaobsd \mQ  - \mQ \mLambdatrue \right\|_F^2 }
{ \left(\sqrt{\lambdatrue{d}}+\sqrt{\lambdaobsd{d}} \right)^2
	\lambdatrue{d} \lambdaobsd{d} } .
\end{equation*}
Applying Lemma~\ref{lem:RobsdRtrue:spectral} along with Equations~\eqref{eq:assum:noisepower} and~\eqref{eq:assum:lambdad:LB},
\begin{equation*}
\left\| \mLambdaobsd^{-1/2} \mQ  - \mQ \mLambdatrue^{-1/2} \right\|_F^2
\le
\frac{ C }{ \lambdatrue{d}^3 }
 \left\| \mLambdaobsd \mQ  - \mQ \mLambdatrue \right\|_F^2 .
\end{equation*}
Taking square roots and applying Lemma~\ref{lem:Qinterchange},
\begin{equation*}
\left\| \mLambdaobsd^{-1/2} \mQ  - \mQ \mLambdatrue^{-1/2} \right\|_F
\le
\frac{ C }{ \lambdatrue{d}^{3/2} } \!
\left[
\frac{ \sqrt{d} \kappa^2  \log n }{ \SNR } \!
\left( \frac{ \lambdatrue{1} \log n }{ \SNR } + 1 \! \right)
+ \left\| \mRtrue \right\|_F \!
\left( \frac{ \log T }{ \SNR } + \! \sqrt{ \frac{ \log T }{ T \SNR } } \right)
\right] .
\end{equation*}
Simplifying and using our assumption that $n = O(T)$,
\begin{equation*}
\left\| \mLambdaobsd^{-1/2} \mQ  - \mQ \mLambdatrue^{-1/2} \right\|_F
\le
C \kappa \sqrt{ \frac{ d \log T }{ \SNR \lambdatrue{d} } }
\left[
\kappa \sqrt{ \frac{ \log n }{ \SNR } }\!
\left( \frac{ \kappa \log n }{ \SNR } + \frac{1}{\lambdatrue{d}} \! \right)
+ \!
\left( \sqrt{ \frac{ \log T }{ \SNR } } + \! \sqrt{ \frac{ 1 }{ T } } \right)
\right] ,
\end{equation*}
as we set out to show.
\end{proof}

\section{Proof of Main Result} \label{apx:thm:tti}

Using the supporting results established in Appendices~\ref{apx:sigma} through~\ref{apx:subspaces}, we are ready to prove our main theorem.

\begin{proof}[Proof of Theorem~\ref{thm:XobsdXtrue:tti}]
From the definitions in Equations~\eqref{eq:def:Xtrue} and~\eqref{eq:def:mXobsd},
\begin{equation*}
\mXobsd =  \mRobsd \mUobsd \mLambdaobsd^{-1/2}
~\text{ and }~
\mXtrue = \mRtrue \mUtrue \mLambdatrue^{-1/2} .
\end{equation*}
Thus, recalling $\mQ \in \bbO_d$ from Equation~\eqref{eq:def:Q}, the triangle inequality implies
\begin{equation} \label{eq:XhatX:tti:start} \begin{aligned}
\left\| \mXobsd \mQ - \mXtrue \right\|_{\tti}
&\le
\left\| \left( \mRobsd - \mRtrue \right) \mUtrue \mLambdatrue^{-1/2} \right\|_{\tti} 
+
\left\| \mRobsd \mUobsd \mQ \left( \mQ^\top \mLambdaobsd^{-1/2} \mQ - \mLambdatrue^{-1/2} \right) \right\|_{\tti} \\
&~~~~~~+ \left\| \mRobsd \left( \mUobsd \mQ - \mUtrue \right) \mLambdatrue^{-1/2} \right\|_{\tti}
\end{aligned} \end{equation}

Applying Lemma~\ref{lem:RobsdRtrueULam:mxtti},
\begin{equation} \label{eq:XhatX:tti:term1:done}
\left\| \left( \mRobsd - \mRtrue \right) \mUtrue \mLambdatrue^{-1/2} \right\|_{\tti}
\le 
C \sqrt{ \frac{ d \log T }{ \SNR \lambdatrue{d} } }
\left[ 1
	+ \left( \sqrt{\frac{ \log n }{ \SNR }} 
		+ \sqrt{ \frac{ 1 }{ T } } \right) \| \mRtrue \|_{\tti}
\right] .
\end{equation}

Applying basic properties of the $(\tti)$-norm,
\begin{equation*} 
\left\| \mRobsd \mUobsd \mQ \left( \mQ^\top \mLambdaobsd^{-1/2} \mQ - \mLambdatrue^{-1/2} \right) \right\|_{\tti}
\le
\left\| \mRobsd \right\|_{\tti}
\left\| \mQ^\top \mLambdaobsd^{-1/2} \mQ - \mLambdatrue^{-1/2} \right\| .
\end{equation*}
Applying Lemmas~\ref{lem:RobsdRtrue:tti} and~\ref{lem:Qinter:sqrt},
\begin{equation*} \begin{aligned}
&\left\| \mRobsd \mUobsd \mQ \left( \mQ^\top \mLambdaobsd^{-1/2} \mQ 
		- \mLambdatrue^{-1/2} \right) \right\|_{\tti} \\
&~~~~~\le
C \kappa \left\| \mRtrue \right\|_{\tti}
\sqrt{ \frac{ d \log T }{ \SNR \lambdatrue{d} } }
\left[
\kappa \sqrt{ \frac{ \log n }{ \SNR } }\!
\left( \frac{ \kappa \log n }{ \SNR } + \frac{1}{\lambdatrue{d}} \! \right)
+ \!
\left( \sqrt{ \frac{ \log T }{ \SNR } } + \! \sqrt{ \frac{ 1 }{ T } } \right)
\right] .
\end{aligned} \end{equation*}
Simplifying and using the assumptions in Equations~\eqref{eq:assum:noisepower} and~\eqref{eq:assum:lambdad:LB},
\begin{equation} \label{eq:XhatX:tti:term2:done} \begin{aligned}
&\left\| \mRobsd \mUobsd \mQ \! 
	\left( \mQ^\top \! \mLambdaobsd^{-1/2} \mQ \!
		- \mLambdatrue^{-1/2} \!\right) \right\|_{\tti} 
\! \le
C \kappa
\sqrt{ \frac{ d \log T }{ \SNR \lambdatrue{d} } } \!
\left( \kappa \sqrt{ \frac{ \log n }{ \SNR } }
 	+ \sqrt{ \frac{ 1 }{ T } } \right)
\left\| \mRtrue \right\|_{\tti} \! .
\end{aligned} \end{equation}

Recalling the definition of $\mRobsd$ from Equation~\eqref{eq:def:Robsd},
\begin{equation*} \begin{aligned}
\left\| \mRobsd \left( \mUobsd \mQ - \mUtrue \right) \mLambdatrue^{-1/2} \right\|_{\tti}
&= \left\| \mUobsd \mLambdaobsd 
		\left( \mQ - \mUobsd^\top \mUtrue \right) \mLambdatrue^{-1/2} \right\|_{\tti} \\
&= \left\| \mUobsd \mLambdaobsd \mUobsd^\top \mUobsd
		\left( \mQ - \mUobsd^\top \mUtrue \right) \mLambdatrue^{-1/2} \right\|_{\tti} .
\end{aligned} \end{equation*}
It follows that, by basic properties of the $(\tti)$-norm,
\begin{equation*}
\left\| \mRobsd \left( \mUobsd \mQ - \mUtrue \right) \mLambdatrue^{-1/2} \right\|_{\tti}
\le \frac{ \left\| \mRobsd \mUobsd \right\|_{\tti} }{ \sqrt{ \lambdatrue{d} } }
		\left\| \mQ - \mUobsd^\top \mUtrue \right\| 
\le \frac{ \left\| \mRobsd \right\|_{\tti} }{ \sqrt{ \lambdatrue{d} } }
		\left\| \mQ - \mUobsd^\top \mUtrue \right\| .
\end{equation*}
Applying Lemmas~\ref{lem:RobsdRtrue:tti} and~\ref{lem:SpectralDS:lem4.15:H},
\begin{equation*} 
\left\| \mRobsd \left( \mUobsd \mQ - \mUtrue \right) \mLambdatrue^{-1/2} \right\|_{\tti}
\le 
\frac{ C \left\| \mRtrue \right\|_{\tti} }{ \sqrt{ \lambdatrue{d} } }
\frac{\kappa \log n }{ \SNR }
\left( \frac{ \kappa \log n }{ \SNR } + \frac{ 1 }{ \lambdatrue{d} } \right).
\end{equation*}
Simplifying, using the assumptions in Equations~\eqref{eq:assum:noisepower} and~\eqref{eq:assum:lambdad:LB} and the trivial $d \ge 1$,
\begin{equation} \label{eq:XhatX:tti:term3:done}
\left\| \mRobsd \left( \mUobsd \mQ - \mUtrue \right) \mLambdatrue^{-1/2} \right\|_{\tti}
\le 
C \kappa \sqrt{ \frac{ d \log n }{ \SNR \lambdatrue{d} } }
\left\| \mRtrue \right\|_{\tti} 
\sqrt{ \frac{\log n }{ \SNR } } .
\end{equation}

Applying Equations~\eqref{eq:XhatX:tti:term1:done},
~\eqref{eq:XhatX:tti:term2:done}
and~\eqref{eq:XhatX:tti:term3:done}
to Equation~\eqref{eq:XhatX:tti:start}, using our assumption that $n = O(T)$ and simplifying,
\begin{equation*} \begin{aligned}
\left\| \mXobsd \mQ - \mXtrue \right\|_{\tti}
&\le 
C \sqrt{ \frac{ d \log T }{ \SNR \lambdatrue{d} } }
\left[ 1
	+ \kappa \left( \kappa \sqrt{\frac{ \log n }{ \SNR }} 
		+ \sqrt{ \frac{ 1 }{ T } } \right) \| \mRtrue \|_{\tti} 
\right] ,
\end{aligned} \end{equation*}
as we set out to show.
\end{proof}

\end{document}